\newtheorem{theorem}{Theorem}[section]
\newtheorem{defi}[theorem]{Definition}
\newtheorem{lemma}[theorem]{Lemma}
\newtheorem{coro}[theorem]{Corollary}
\newtheorem{proposition}[theorem]{Proposition}
\newtheorem{remark}[theorem]{Remark}
\numberwithin{equation}{section}
\begin{document}

\title[$RLL$-realization of   $U_{r,s}\mathcal(\widehat{\mathfrak{so}_{2n}})$]{$RLL$-realization of two-parameter quantum affine algebra in type $D_n^{(1)}$ }

\author[Zhuang]{Rushu Zhuang}
\address{School of Mathematical Sciences, MOE Key Laboratory of Mathematics and Engineering Applications \& Shanghai Key Laboratory of PMMP, East China Normal University, Shanghai 200241, China}
\email{52205500005@ecnu.edu.cn}

\author[Hu]{Naihong Hu{$^*$}}
\address{School of Mathematical Sciences, MOE Key Laboratory of Mathematics and Engineering Applications \& Shanghai Key Laboratory of PMMP, East China Normal University, Shanghai 200241, China}
\email{nhhu@math.ecnu.edu.cn}

\author[Xu]{Xiao Xu}
\address{School of Mathematical Sciences, MOE Key Laboratory of Mathematics and Engineering Applications \& Shanghai Key Laboratory of PMMP, East China Normal University, Shanghai 200241, China}
\email{52275500004@stu.ecnu.edu.cn}

\thanks{$^*$Corresponding author. This work is
	supported by the NNSF of China (Grant No. 12171155), and in part by the Science and Technology Commission of Shanghai Municipality (Grant No. 22DZ2229014).}

\begin{abstract}
We obtain the basic $R$-matrix of the two-parameter Quantum group $U=U_{r,s}\mathcal(\mathfrak{so}_{2n})$ via its weight representation theory and  determine its $R$-matrix with spectral parameters for the two-parameter quantum affine algebra $U=U_{r,s}\mathcal(\widehat{\mathfrak{so}_{2n}})$. Using the Gauss decomposition of the $R$-matrix realization of $U=U_{r,s}\mathcal(\mathfrak{so}_{2n})$, we study the commutation relations of the Gaussian generators and finally arrive at its $RLL$-formalism of the Drinfeld realization of two-parameter quantum affine algebra $U=U_{r,s}\mathcal(\widehat{\mathfrak{so}_{2n}})$.
\end{abstract}

\keywords {basic $R$-matrix; Drinfeld realization; $RLL$ formulation; Quantum affine algebra; Gauss decomposition}
\subjclass{Primary 17B37, 16T25, 81R50; Secondary 17B67}
\maketitle

\tableofcontents

\section{Introduction}
Quantum groups were independently discovered by Drinfeld \cite{D1, D2} and Jimbo \cite{J} that the universal enveloping algebra $U(\mathfrak{g})$ of any Kac-Moody algebra $\mathfrak{g}$ admits  a certain $q$-deformation $U_q(\mathfrak{g})$ as a Hopf algebra. Their construction is given
in terms of Chevalley generators and $q$-Serre relations. For the Yangian algebra $Y(\mathfrak{g})$ and the quantum affine algebra $U_q(\widehat{\mathfrak{g}})$
of complex simple Lie algebra $\mathfrak g$, Drinfeld \cite{D3} gave their new realizations, which are quantizations of the loop realizations of the classical loop and affine Lie algebras.  Faddeev, Reshetikhin and Takhtajan \cite{FRT89}  presented the $RLL$-realizations of $U_q(\mathfrak{g})$ (\cite{KS}) of the classical simple Lie algebras $\mathfrak g$ by means of solutions of the quantum Yang-Baxter equation
$$
R_{12} R_{13} R_{23}= R_{23}R_{13}R_{12},
$$
where $R_{12} = R\otimes I$, etc., and $R\in$ End($\mathbb{C}^{n}\otimes\mathbb{C}^{n}$). This realization is a natural analog of the matrix realizations of the classical Lie algebras, which originated from the quantum inverse scattering method developed by the St. Petersburg school. Later on, the $R$-matrix realization of quantum loop algebra $U_q(\mathfrak{g}\otimes\mathbb{C}[t,t^{-1}])$ using a solution of the quantum Yang-Baxter equation with spectral parameters $z, w\in\mathbb{C}$:
$$
R_{12}(z)R_{13}(zw)R_{23}(w)= R_{23}(w)R_{13}(zw)R_{12}(z),
$$
where $R(z)$ is a rational function of $z$ with values in End($\mathbb{C}^{n}\otimes\mathbb{C}^{n}$) was given by Faddeev, Reshetikhin and Takhatajan in \cite{FRT90}.

As we know \cite{Ga}, the affine Kac-Moody algebra $\widehat{\mathfrak{g}}$ admits a natural realization as a central extension of the loop algebra $\mathfrak{g}\otimes\mathbb{C}[t,t^{-1}]$, when $\mathfrak{g}$ is a simple Lie algebra. In \cite{RS}, Reshetikhin and Semenov-Tian-Shansky proved that the central extension can be viewed as the affine analogue of the construction in \cite{FRT90}. In \cite{DF}, by using the Gauss decomposition, Ding and Frenkel  showed that the $R$-matrix realization and Drinfeld realization of quantum affine algebra $U_q(\widehat{\mathfrak{gl}_{n}})$ are isomorphic.
Recently, Jing, Liu and Molev \cite{JLM, JLM1} presented the isomorphisms between the $R$-matrix and Drinfeld presentations
of one-parameter quantum affine algebras $U_q(\widehat {\mathfrak g})$ of affine types $B^{(1)}_n, C^{(1)}_n, D^{(1)}_n$.

On the other hand, in 2001, Benkart and Witherspoon \cite{BW}, motivated by the two-parameters $(r,s)$-Serre relations satisfied by the up-down operators defined on posets, re-obtained the two-parameter quantum enveloping algebras $U_{r,s}(\mathfrak{g})$ corresponding to the general linear Lie algebra $\mathfrak {gl}_n$ and the special linear Lie algebra $\mathfrak {sl}_n$, which were earlier defined by Takeuchi \cite{T} in 1990. Bergeron, Gao, Hu in \cite{BGH} found the defining structures of two-parameter quantum groups $U_{r,s}(\mathfrak g)$ of orthogonal and symplectic Lie algebras, which are realized as the Drinfeld doubles and established their weight representation theory of category $\mathcal O^{r,s}$ (\cite{BGH1}).  Much work has been done for the other types including the affine types (see \cite{Chen-Hu-Wang} and references therein).  Hu-Rosso-Zhang \cite{HRZ} and Hu-Zhang \cite{HZ} initiated to define and study the vertex representations of Drinfeld realizations of two-parameter quantum affine algebras of untwisted types and constructed the quantum affine Lyndon bases. The $RLL$-realization of the two-parameter quantum affine algebra $U_{r,s}(\widehat{\mathfrak{gl}_{n}})$ has been given by Jing and Liu \cite{JL} under the name of $RTT$-realization.

A natural open question is how to work out the $RLL$-realizations for the quantum affine algebras $U_{r,s}(\widehat {\mathfrak g})$ of types $B_n^{(1)}, C^{(1)}_n, D^{(1)}_n$.
The difficulty lies in that there has been no information about two-parameter basic $R$-matrices in the corresponding cases for many years, let alone their Yang-Baxterizations. Recently we have overcome such obstacles and solve these problems (also see the subsequent preprints \cite{HXZ, ZhHJ}).

The current paper is the first to give the $RLL$-realization of the two-parameter quantum affine algebra $U_{r,s}(\widehat{\mathfrak{so}_{2n}})$ by the Reshetikhin and Semenov-Tian-Shanski method. We show that Drinfeld's construction can be naturally established in the Gaussian decomposition of a matrix composed of elements of the quantum affine algebra. The organization of the paper is as follows. In Section 2, we recall the basic results. In Section 3, we give the $\hat{R}$-matrix of the two-parameter quantum group $U_{r,s}(\mathfrak{so}_{2n})$. In Section 4, we give the isomorphism between Faddeev-Reshetikhin-Takhtajan and Drinfeld-Jimbo definitions of $U_{r,s}\mathcal(\mathfrak{so}_{2n})$, and further give the spectral parameter dependent $R$-matrix $\hat{R}(z)$ as the Yang-Baxterzation of the basic $R$-matrix we obtained. In Sections 5--6, we study the commutation relations between Gaussian generators and give the Drinfeld realization of $U_{r,s}(\widehat{\mathfrak{so}_{2n}})$ (modified version of \cite{HZ}).

\section{Preliminaries}
In \cite{BGH}, let $\mathbb{K}=\mathbb{Q}(r,s)$ be a ground field of rational functions in $r, s$, where $r, s$ are algebraically independent indeterminates. Assume $\Phi$ is a finite root system of type $D_{n}$ with $\Pi$ a base of simple roots. Regard $\Phi$ as a subset of a Euclidean space $E=\mathbb{R}^{n}$ with an inner product $(,)$. Let $\epsilon_1,\dots,\epsilon_n$ denote an orthonormal basis of $E$, and suppose $\Pi=\{\alpha_i=\epsilon_i-\epsilon_{i+1}\mid 1\le i<n\}$ $\cup$ $\{\alpha_n=\epsilon_{n-1}+\epsilon_{n}\}$ and $\Phi=\{\pm\epsilon_i \pm \epsilon_j\mid 1\le i\neq j\le n\}$. In this case, set $r_i=r^{(\alpha_i,\alpha_i)/2}$ and $s_i=s^{(\alpha_i,\alpha_i)/2}$, so that $r_1=\cdots=r_n=r$ and $s_1=\cdots=s_n=s$.

The Cartan matrix of $D_n$ is
\begin{equation}
D_n=(a_{ij})_{n\times n}=\left(\begin{array}{rrrrrrr}2&-1&0&\cdots&0&0&0\\-1&2&-1&\cdots&0&0&0\\0&-1&2&\cdots&0&0&0\\\vdots&\vdots&\vdots&\quad&\vdots&\vdots&\vdots\\0&0&0&\cdots&2&-1&-1
\\0&0&0&\cdots&-1&2&0\\0&0&0&\cdots&-1&0&2\end{array}\right).
\end{equation}
The quantum structural constant matrix is of the form:
\begin{equation}
(\langle w^{\prime}_i,w_j\rangle)_{n\times n}=\left(\begin{array}{rrrrrrr}rs^{-1}&r^{-1}&1&\cdots&1&1&1\\s&rs^{-1}&r^{-1}&\cdots&1&1&1\\1&s&rs^{-1}&\cdots&1&1&1\\\vdots&\vdots&\vdots&\quad&\vdots&\vdots&\vdots\\1&1&1&\cdots&rs^{-1}&r^{-1}&r^{-1}
\\1&1&1&\cdots&s&rs^{-1}&r^{-1}s^{-1}\\1&1&1&\cdots&s&rs&rs^{-1}\end{array}\right).
\end{equation}
In \cite{BGH}, let $U_{r,s}\mathcal(\mathfrak{so}_{2n})$ be the unital associative algebra over $\mathbb{Q}(r,s)$ generated by $e_i, f_i,w^{\pm1}_{i},w^{\prime\pm1}_{i}(1\le i\le n)$, subject to the defining relations $(\rm D1)$ --- $(\rm D6)$:
\smallskip

\noindent
(D1) $w^{\pm1}_{i}, w^{\prime\pm1}_{i}$ all commute with one another and $w_i w^{-1}_i=w^{\prime}_{i}w^{\prime-1}_{i}=1$.
\smallskip

\noindent
(D2) For $1\le i\le n,$ $1\le j\le n,$ and $1\le k(\neq n-1)\le n,$ we have
\begin{gather*}
w_{j}e_{i}=r^{(\epsilon_j,\alpha_i)}s^{(\epsilon_{j+1},\alpha_i)}e_{i}w_{j},\qquad w_{j}f_{i}=r^{-(\epsilon_j,\alpha_i)}s^{-(\epsilon_{j+1},\alpha_i)}f_{i}w_{j},\\
w^{\prime}_{j}e_{i}=s^{(\epsilon_j,\alpha_i)}r^{(\epsilon_{j+1},\alpha_i)}e_{i}w^{\prime}_{j},\qquad w^{\prime}_{j}f_{i}=s^{-(\epsilon_j,\alpha_i)}r^{-(\epsilon_{j+1},\alpha_i)}f_{i}w^{\prime}_{j},\\
w_{n}e_{k}=r^{(\epsilon_{n-1},\alpha_k)}s^{-(\epsilon_{n},\alpha_k)}e_{k}w_{n},\quad w_{n}f_{k}=r^{-(\epsilon_{n-1},\alpha_k)}s^{(\epsilon_{n},\alpha_k)}f_{k}w_{n},\\
w^{\prime}_{n}e_{k}=s^{(\epsilon_{n-1},\alpha_k)}r^{-(\epsilon_{n},\alpha_k)}e_{k}w^{\prime}_{n},\quad w^{\prime}_{n}f_{k}=s^{-(\epsilon_{n-1},\alpha_k)}r^{(\epsilon_{n},\alpha_k)}f_{k}w^{\prime}_{n}.
\end{gather*}
(D3)
\begin{gather*}
w_{n}e_{n-1}=r^{(\epsilon_{n},\alpha_{n-1})}s^{-(\epsilon_{n-1},\alpha_{n-1})}e_{n-1}w_{n},\\ w_{n}f_{n-1}=r^{-(\epsilon_{n},\alpha_{n-1})}s^{(\epsilon_{n-1},\alpha_{n-1})}f_{n-1}w_{n},\\
w^{\prime}_{n}e_{n-1}=s^{(\epsilon_{n},\alpha_{n-1})}r^{-(\epsilon_{n-1},\alpha_{n-1})}e_{n-1}w^{\prime}_{n},\\ w^{\prime}_{n}f_{n-1}=s^{-(\epsilon_{n},\alpha_{n-1})}r^{(\epsilon_{n-1},\alpha_{n-1})}f_{n-1}w^{\prime}_{n}.
\end{gather*}
(D4) For $1\le i,\;j\le n$, we have
\begin{gather*}
[e_i,f_j]=\delta_{ij}\frac{w_i-w^{\prime-1}_i}{r-s}.
\end{gather*}
(D5) For any $1\le i\neq j\le n$ but $(i,j)\notin\{(n-1,n),(n,n-1)\}$ with $a_{ij}=0,$ we have:
\begin{gather*}
[e_i,e_j]=0,\quad e_{n-1}e_n=rse_ne_{n-1},\\
[f_i,f_j]=0,\quad f_nf_{n-1}=rsf_{n-1}f_n.
\end{gather*}
(D6) For $1\le i<j\le n$ with $a_{ij}=-1$, we have $(r,s)$-Serre relations:
\begin{gather*}
e^{2}_{i}e_{j}-(r+s)e_{i}e_{j}e_{i}+rse_je^2_i=0,\\
f_{j}f^2_i-(r+s)f_{i}f_{j}f_{i}+rsf^2_if_j=0,\\
e^{2}_{j}e_{i}-(r^{-1}+s^{-1})e_{j}e_{i}e_{j}+r^{-1}s^{-1}e_ie^2_j=0,\\
f_{i}f^2_{j}-(r^{-1}+s^{-1})f_{j}f_{i}f_{j}+r^{-1}s^{-1}f^2_jf_i=0.
\end{gather*}

\begin{proposition}
The algebra $U_{r,s}\mathcal(\mathfrak{so}_{2n})$ is a Hopf algebra with the comultiplication $\Delta$, counit $\varepsilon$ and antipode $S$ such that
\begin{equation*}
\begin{aligned}
\Delta(e_{i})&=e_{i}\otimes 1+w_{i}\otimes e_{i},&\qquad
\Delta(f_{i})&=1\otimes f_i+f_i\otimes w^{\prime}_{i},\\
\varepsilon(e_{i})&=0,&\quad
\varepsilon(f_{i})&=0,\\
S(e_{i})&=-w_i^{-1}e_{i},&\quad
S(f_{i})&=-f_{i}w_i^{\prime-1},
\end{aligned}
\end{equation*}
and $w_i,w^{\prime}_i$ are group-like elements for any $i\in I$.
\end{proposition}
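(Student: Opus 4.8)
Since $U_{r,s}(\mathfrak{so}_{2n})$ is presented by generators and relations, the plan is to verify in turn that the stated formulas on the generators extend to the required structure maps, and then to check the Hopf axioms on generators only. Concretely: (a) show that $\Delta$ and $\varepsilon$ extend to algebra homomorphisms $U\to U\otimes U$ and $U\to\mathbb{K}$, which amounts to checking that the images $\Delta(e_i),\Delta(f_i),\Delta(w_i^{\pm1}),\Delta(w_i'^{\pm1})$ (resp.\ their $\varepsilon$-images) satisfy relations (D1)--(D6); (b) show that $S$ extends to an algebra anti-homomorphism $U\to U$, i.e.\ that $x\mapsto S(x)$ preserves each relation with the order of all products reversed; (c) verify coassociativity $(\Delta\otimes\mathrm{id})\Delta=(\mathrm{id}\otimes\Delta)\Delta$, the counit identity $(\varepsilon\otimes\mathrm{id})\Delta=\mathrm{id}=(\mathrm{id}\otimes\varepsilon)\Delta$, and the antipode identity $m(S\otimes\mathrm{id})\Delta=\eta\varepsilon=m(\mathrm{id}\otimes S)\Delta$. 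The steps in (c) are then formal once (a) and (b) hold: both sides of the coassociativity and counit identities are algebra homomorphisms out of $U$, hence are determined by their (evidently matching) values on the generators; and the antipode identity need only be checked on generators, where e.g.\ $m(S\otimes\mathrm{id})\Delta(e_i)=S(e_i)+S(w_i)e_i=-w_i^{-1}e_i+w_i^{-1}e_i=0=\varepsilon(e_i)$, with the analogous one-line computations for $f_i$ and for the group-likes.

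The substance of the argument is therefore in (a). Relations (D1) are immediate because the $w_i^{\pm1},w_i'^{\pm1}$ are sent to commuting group-like elements of $U\otimes U$. For (D2)--(D3) and for (D5), after expanding products such as $\Delta(w_j)\Delta(e_i)=(w_j\otimes w_j)(e_i\otimes 1+w_i\otimes e_i)$ and commuting $w_j$ past $w_i$ in the second tensor leg by (D1), the relation reduces to the scalar identity expressing that the relevant power of $r$ (resp.\ $s$) is additive under $\Delta$; the $q$-commutation $e_{n-1}e_n=rse_ne_{n-1}$ and the trivial commutators are handled the same way. Relation (D4) is the one that genuinely uses the structure: expanding $[\Delta(e_i),\Delta(f_j)]$ produces, besides the two diagonal contributions $[e_i,f_j]\otimes w_j'$ and $w_i\otimes[e_i,f_j]$, a cross term $w_if_j\otimes e_iw_j'-f_jw_i\otimes w_j'e_i$; using (D2)--(D3) to normal-order the $w$'s in each leg, the scalars that occur are exactly those that make this cross term vanish when $i\ne j$ and combine with the diagonal terms to reproduce $\delta_{ij}\Delta\!\left(\frac{w_i-w_i'^{-1}}{r-s}\right)$ when $i=j$ — this is precisely the compatibility built into the choice of exponents in (D2)--(D3). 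Finally (D6) requires applying $\Delta$ to each cubic $(r,s)$-Serre expression, recording the $w$-commutation scalars, collecting the resulting terms $e_i^{\,a}e_je_i^{\,b}\otimes(\cdots)$ by weight, and observing that each homogeneous component is a scalar multiple of a lower Serre expression, hence zero; this is the two-parameter analogue of the standard Drinfeld--Jimbo computation.

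The main obstacle is exactly this last point: verifying well-definedness of $\Delta$ on the $(r,s)$-Serre relations (D6) and on (D4) is where all the bookkeeping lies, and it is where the scalars in (D2)--(D3) must be used essentially. An equivalent and cleaner route, essentially that of \cite{BGH}, is to realize $U_{r,s}(\mathfrak{so}_{2n})$ as a Drinfeld double: let $\mathcal{B}$ (resp.\ $\mathcal{B}'$) be the Hopf subalgebra generated by the $e_i,w_i^{\pm1}$ (resp.\ $f_i,w_i'^{\pm1}$), construct the skew-Hopf pairing $\langle\,,\,\rangle\colon\mathcal{B}'\times\mathcal{B}\to\mathbb{K}$ determined on generators by the entries of the structural constant matrix, and identify $U$ with the double $\mathcal{D}(\mathcal{B},\mathcal{B}',\langle\,,\,\rangle)$; the Hopf structure of the double then yields the stated $\Delta,\varepsilon,S$ automatically, reducing the work to checking that the two triangular halves are Hopf algebras and that the pairing is well defined — which again comes down to the Serre relations, but now only one half at a time.
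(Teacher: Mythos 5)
Your proposal is correct in structure, but note that the paper itself offers no proof of this proposition: it is recalled verbatim from \cite{BGH}, where the Hopf structure is obtained by realizing $U_{r,s}(\mathfrak{g})$ as a Drinfeld double $\mathcal{D}(\mathcal{B},\mathcal{B}')$ of the two triangular Hopf subalgebras via a skew-Hopf pairing --- exactly the ``cleaner route'' you sketch in your last paragraph. So your primary argument (direct verification on generators and relations, with the real work concentrated in the well-definedness of $\Delta$ on (D4) and the $(r,s)$-Serre relations (D6)) is genuinely different from the source's: it is more elementary and self-contained, at the price of the cubic bookkeeping in (D6), whereas the double construction buys the coproduct, counit and antipode formulas for free once each half is checked to be a Hopf algebra and the pairing is shown to respect the Serre relations. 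Both are legitimate; your outline of the formal steps (coassociativity and the counit/antipode axioms reduce to generators once $\Delta,\varepsilon$ are algebra maps and $S$ is an anti-map) is standard and sound.

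One concrete caveat your (D4) computation would surface if carried out literally against the paper's stated relations: with $\Delta(f_i)=1\otimes f_i+f_i\otimes w_i'$, the diagonal terms of $[\Delta(e_i),\Delta(f_i)]$ assemble to $\frac{w_i\otimes w_i-w_i'\otimes w_i'}{r-s}=\Delta\bigl(\frac{w_i-w_i'}{r-s}\bigr)$ (the mixed terms $w_i\otimes w_i'$ cancel), which matches (D4) only if the right-hand side there reads $\frac{w_i-w_i'}{r-s}$ rather than the printed $\frac{w_i-w_i'^{-1}}{r-s}$; with the printed inverse one would instead need $\Delta(f_i)=1\otimes f_i+f_i\otimes w_i'^{-1}$ and $S(f_i)=-f_iw_i'$. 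This is a sign/convention discrepancy in the paper's transcription of \cite{BGH}, not a gap in your method, but your claim that the cross and diagonal terms ``reproduce $\delta_{ij}\Delta(\frac{w_i-w_i'^{-1}}{r-s})$'' should be adjusted to whichever consistent convention is adopted.
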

\begin{defi}
\rm We define the linear mapping $f: \Lambda\times \Lambda \rightarrow k^{*}$
$$f(\lambda,\mu)=\langle w^{\prime}_{\mu},w_{\lambda}\rangle^{-1},$$
which satisfies
$$f(\lambda+\nu,\mu)=f(\lambda,\mu)f(\nu,\mu),\quad f(\lambda,\mu+\nu)=f(\lambda,\mu)f(\lambda,\nu).$$
\end{defi}
\begin{defi}
\rm  Let $M, M^{\prime}$ be finite dimensional $U$-modules. Define an isomorphism
 of $U$-modules  $\widetilde{f}: M \otimes M^{\prime} \rightarrow M\otimes M^{\prime}$ as
 $$\widetilde{f}(m\otimes m^{\prime})=f(\lambda,\mu)m\otimes m^{\prime},$$
 where $m\in M_{\lambda}$, $m^{\prime}\in M^{\prime}_{\mu}$ and $\lambda,\mu\in\Lambda$.
\end{defi}
\begin{coro} \cite{BGH1} $U_{r,s}(\mathfrak g)\cong U_{r,s}(\mathfrak
n^-)\otimes U^0\otimes U_{r,s}(\mathfrak n)$, {\it as vector spaces. In
particular, it induces $U_q(\mathfrak g)\cong U_q(\mathfrak
n^-)\otimes U_0\otimes U_q(\mathfrak n)$, as vector
spaces.\hfill}
\end{coro}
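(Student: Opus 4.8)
The plan is to establish the triangular decomposition in the standard quantum-group fashion, reducing it to a PBW statement for the nilpotent halves together with the Drinfeld double structure of $U_{r,s}(\mathfrak g)$. Write $U^{\geq 0}=\langle e_i, w_i^{\pm1}, w'^{\pm1}_i\rangle$, $U^{\leq 0}=\langle f_i, w_i^{\pm1}, w'^{\pm1}_i\rangle$, $U^{0}=\langle w_i^{\pm1}, w'^{\pm1}_i\rangle$, and let $U_{r,s}(\mathfrak n)$ (resp. $U_{r,s}(\mathfrak n^-)$) be the subalgebra generated by the $e_i$ (resp. the $f_i$). The target is that the multiplication map
$$
m\colon U_{r,s}(\mathfrak n^-)\otimes U^0\otimes U_{r,s}(\mathfrak n)\longrightarrow U_{r,s}(\mathfrak g),\qquad x\otimes h\otimes y\mapsto xhy,
$$
is a linear isomorphism. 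Surjectivity is the elementary half: every monomial in the generators can be brought into the ordered form ``$f$'s $\cdot$ Cartan $\cdot$ $e$'s'' by repeatedly using (D1) to collect the $w_i^{\pm1},w'^{\pm1}_i$, (D2)--(D3) to commute a Cartan element past an $e_i$ or $f_i$ at the cost of a monomial in $r,s$, (D4) to rewrite $e_if_j$ as $f_je_i$ plus a Cartan term, and (D5) to keep the $[e_{n-1},e_n]$- and $[f_{n-1},f_n]$-type products inside $U_{r,s}(\mathfrak n)$ and $U_{r,s}(\mathfrak n^-)$; hence $\operatorname{Im} m=U_{r,s}(\mathfrak g)$.

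For injectivity I would first record the two Borel-level decompositions $U^{\geq 0}\cong U^0\otimes U_{r,s}(\mathfrak n)$ and $U^{\leq 0}\cong U_{r,s}(\mathfrak n^-)\otimes U^0$, which hold because $U^{\geq 0}$ is a crossed product $U_{r,s}(\mathfrak n)\rtimes\mathbb K[\Lambda]$: by (D1) the Cartan part $U^0$ is the group algebra $\mathbb K[\Lambda]$ of the rank-$2n$ lattice $\Lambda$ of $w,w'$-exponents, a domain, and (D2)--(D3) say it acts on the $\mathbb Z\Phi$-graded algebra $U_{r,s}(\mathfrak n)$ by scaling the degree-$\beta$ piece by a fixed character of $\Lambda$, so the multiplication $U_{r,s}(\mathfrak n)\otimes\mathbb K[\Lambda]\to U^{\geq 0}$ is bijective (likewise for $U^{\leq 0}$). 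Next I invoke the structure theorem of \cite{BGH}: $U_{r,s}(\mathfrak g)$ is isomorphic to the Drinfeld quantum double of the Hopf subalgebras $\mathcal B=\langle e_i,w_i^{\pm1}\rangle$ and $\mathcal B'=\langle f_i,w'^{\pm1}_i\rangle$ with respect to the skew-Hopf pairing determined by $f(\lambda,\mu)=\langle w'_\mu,w_\lambda\rangle^{-1}$, with the two copies of $U^0$ identified. Since a Drinfeld double $D(\mathcal B,\mathcal B')$ is $\mathcal B\otimes\mathcal B'$ as a vector space, combining with the Borel decompositions and cancelling the repeated $U^0$ (using (D2)--(D4) to move the $U_{r,s}(\mathfrak n)$-factor across $U^0$) yields exactly $U_{r,s}(\mathfrak n^-)\otimes U^0\otimes U_{r,s}(\mathfrak n)\cong U_{r,s}(\mathfrak g)$, i.e. $m$ is injective.

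The real content — and the step I expect to be the main obstacle — is hidden inside $U_{r,s}(\mathfrak n)$ itself: one must know that it is the free associative algebra on $e_1,\dots,e_n$ modulo \emph{exactly} the ideal $I$ generated by the (D5)-relations and the $(r,s)$-Serre relations (D6), i.e. that no extra relations are forced in the ambient algebra. Equivalently, the Drinfeld--Jimbo-type pairing on the free algebra has radical precisely $I$. I would prove this by the Lusztig-style argument adapted to the structure constants $(\langle w'_i,w_j\rangle)$ of $(2.2)$: introduce the skew derivations ${}_ir,\,r_i$ on the free algebra dual to left/right multiplication by $e_i$, check by direct computation that the generators of $I$ pair to zero with everything (so $I\subseteq\operatorname{rad}$), and then show $\operatorname{rad}\subseteq I$ by downward induction on the $\mathbb Z\Phi$-degree using that the ${}_ir$ detect membership in $I$; this simultaneously produces a PBW / quantum-Lyndon basis of $U_{r,s}(\mathfrak n^{\pm})$ (compare \cite{HRZ}). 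An alternative that sidesteps the pairing is to build Verma modules $M(\lambda)$ directly on the candidate space $U_{r,s}(\mathfrak n^-)\otimes\mathbb K_\lambda$ for generic $\lambda\in\Lambda$, verifying well-definedness of the $U_{r,s}(\mathfrak g)$-action against (D1)--(D6); faithfulness of $\bigoplus_\lambda M(\lambda)$ on $U_{r,s}(\mathfrak n^-)$ gives injectivity of $m$ on that factor, while the $\mathbb Z\Phi$-grading and variation of $\lambda$ separate the $U^0$- and $U_{r,s}(\mathfrak n)$-factors. Finally, the one-parameter statement follows by specializing $r,s$ to the quantum parameter (so that $(\langle w'_i,w_j\rangle)$ collapses to the one-parameter structure constants and $w_i,w'_i$ become powers of $K_i^{\pm1}$), under which all of the above degenerates to the classical triangular decomposition of $U_q(\mathfrak g)$.
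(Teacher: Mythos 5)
The paper states this corollary without proof, simply citing \cite{BGH1}, and your argument reproduces exactly the route used there: realize $U_{r,s}(\mathfrak g)$ as the Drinfeld double of the two Borel-type Hopf subalgebras via the skew pairing $f(\lambda,\mu)=\langle w'_\mu,w_\lambda\rangle^{-1}$, use the crossed-product decompositions of $U^{\geq 0}$ and $U^{\leq 0}$, and reduce the remaining content to the PBW/radical statement for $U_{r,s}(\mathfrak n^{\pm})$. Your proposal is correct and takes essentially the same approach as the cited source.
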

Let $Q=\mathbb Z\Phi$ denote the root lattice and set
$Q^+=\sum_{i=1}^n\mathbb Z_{\ge0}\alpha_i$. Then for any
$\zeta=\sum_{i=1}^n\zeta_i\alpha_i\in Q$, we denote
$$
\omega_\zeta=\omega_1^{\zeta_1}\cdots\omega_n^{\zeta_n}, \qquad
\omega_\zeta'=(\omega_1')^{\zeta_1}\cdots(\omega_n')^{\zeta_n}.
$$
Then $U_{r,s}(\mathfrak n^\pm)=\bigoplus_{\eta\in
Q^+}U_{r,s}^{\pm\eta}(\mathfrak n^\pm)$  $Q^\pm$-graded, where
$$
U_{r,s}^{\eta}(\mathfrak n^\pm)=\left\{\,a\in U_{r,s}(\mathfrak
n^\pm)\;\left|\; \omega_\zeta\,a\,\omega_\zeta^{-1}=\langle
\omega_\eta',\omega_\zeta\rangle\,a, \ \omega_\zeta'\,a\,{\omega_\zeta'}^{-1}=\langle
\omega_\zeta',\omega_\eta\rangle^{-1} \,a\,\right\}\right.,
$$
for $\eta\in Q^+\cup Q^-$.
\begin{lemma}
\cite{BGH1} Set $d_{\zeta} = \text{dim}_{\mathbb K} U_{r,s}^{+\zeta}(\mathfrak n^+)$. Consider the basis $\{u^{\zeta}_k\}^{d_\zeta}_{k=1}$ for $U_{r,s}^{+\zeta}(\mathfrak n^+)$, and $\{v^{\zeta}_k\}^{d_\zeta}_{k=1}$ is the dual basis for $U_{r,s}^{-\zeta}(\mathfrak n^-)$ with respect to the pairing. Now let
\begin{equation}
\Theta_{\zeta}=\sum\limits_{k=1}^{d_{\zeta}}v^{\zeta}_k\otimes u^{\zeta}_k\in U\otimes U.
\end{equation}
All but finitely many terms in this sum will act as multiplication by $0$ on any weight space $M_{\lambda}$ of $M \in\mathcal{O}$. $\Theta=\sum_{\zeta\in Q^+}\Theta_\zeta
$ is a well-defined operator on such $M\otimes M$.
\end{lemma}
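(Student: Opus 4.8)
The plan is to deduce the statement from the weight-cone structure of modules in category $\mathcal{O}$, once one knows that the dual bases $\{v^\zeta_k\}$ actually exist. First I would recall from \cite{BGH} that $U_{r,s}(\mathfrak{so}_{2n})$ is built as a Drinfeld double, so that $U_{r,s}(\mathfrak n^+)$ and $U_{r,s}(\mathfrak n^-)$ carry a skew Hopf pairing; this pairing kills $U_{r,s}^{+\eta}(\mathfrak n^+)\times U_{r,s}^{-\zeta}(\mathfrak n^-)$ whenever $\eta\neq\zeta$ (because $\omega_\zeta$ and $\omega_\zeta'$ act by distinct characters on distinct graded pieces) and restricts to a non-degenerate pairing on each diagonal block $U_{r,s}^{+\zeta}(\mathfrak n^+)\times U_{r,s}^{-\zeta}(\mathfrak n^-)$. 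Since $d_\zeta=\dim_{\mathbb K}U_{r,s}^{+\zeta}(\mathfrak n^+)=\dim_{\mathbb K}U_{r,s}^{-\zeta}(\mathfrak n^-)<\infty$, the basis $\{u^\zeta_k\}$ of $U_{r,s}^{+\zeta}(\mathfrak n^+)$ has a bona fide dual basis $\{v^\zeta_k\}$ of $U_{r,s}^{-\zeta}(\mathfrak n^-)$, and $\Theta_\zeta=\sum_k v^\zeta_k\otimes u^\zeta_k$ is the canonical element of $U_{r,s}^{-\zeta}(\mathfrak n^-)\otimes U_{r,s}^{+\zeta}(\mathfrak n^+)$, in particular independent of the choice of basis.

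Next I would test $\Theta_\zeta$ on a weight vector $m\otimes m'$ with $m\in M_\lambda$, $m'\in M'_\mu$ for $M,M'\in\mathcal{O}$. By the $Q^{\pm}$-grading of $U_{r,s}(\mathfrak n^{\pm})$ recalled just above the lemma, $v^\zeta_k m\in M_{\lambda-\zeta}$ and $u^\zeta_k m'\in M'_{\mu+\zeta}$, so that
$$
\Theta_\zeta(m\otimes m')=\sum_{k=1}^{d_\zeta}v^\zeta_k m\otimes u^\zeta_k m'\ \in\ M_{\lambda-\zeta}\otimes M'_{\mu+\zeta},
$$
which vanishes unless $M'_{\mu+\zeta}\neq 0$. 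Because $M'\in\mathcal{O}$, its weights lie in a finite union $\bigcup_{j=1}^{p}(\nu_j-Q^+)$, so $M'_{\mu+\zeta}\neq 0$ forces $\mu+\zeta\le\nu_j$, i.e. $\zeta\le\nu_j-\mu$, for some $j$. As the simple roots of $D_n$ form a $\mathbb{Z}$-basis of $Q$, the set $\{\,\zeta\in Q^+ : \zeta\le\nu_j-\mu\,\}$ is finite for each $j$ (the $\alpha_i$-coefficients of $\zeta$ are squeezed between $0$ and those of $\nu_j-\mu$), and hence only finitely many $\zeta\in Q^+$ contribute a nonzero term on $m\otimes m'$.

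Finally I would glue these together. Each weight space $(M\otimes M')_\nu=\bigoplus_{\alpha+\beta=\nu}M_\alpha\otimes M'_\beta$ is finite-dimensional — the cone condition on $\mathrm{wt}(M)$ and $\mathrm{wt}(M')$ leaves only finitely many admissible pairs $(\alpha,\beta)$, each $M_\alpha\otimes M'_\beta$ being finite-dimensional — so it is spanned by finitely many pure weight tensors; by the previous paragraph all but finitely many $\Theta_\zeta$ annihilate that spanning set and therefore act as $0$ on $(M\otimes M')_\nu$. Consequently $\Theta|_{(M\otimes M')_\nu}=\sum_{\zeta\in Q^+}\Theta_\zeta|_{(M\otimes M')_\nu}$ is a finite sum of linear maps, hence well-defined, and since $M\otimes M'=\bigoplus_\nu(M\otimes M')_\nu$ the operator $\Theta$ is well-defined on $M\otimes M'$, in particular on $M\otimes M$. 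I do not expect a genuine obstacle here; the one point requiring care is invoking the correct non-degeneracy statement for the graded Hopf pairing (from \cite{BGH,BGH1}) so that the dual bases exist, after which the argument is the standard \emph{locally finite action in category $\mathcal{O}$} bookkeeping sketched above.
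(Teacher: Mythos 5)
Your argument is correct. Note, however, that the paper itself gives no proof of this lemma: it is quoted verbatim from \cite{BGH1}, so there is nothing in the present source to compare against line by line. Your proof is the standard one that the cited reference uses — existence of the dual bases from the non-degenerate graded skew-Hopf pairing coming from the Drinfeld-double construction, the weight computation $\Theta_\zeta(M_\lambda\otimes M'_\mu)\subseteq M_{\lambda-\zeta}\otimes M'_{\mu+\zeta}$, and the cone condition on weights of objects of $\mathcal O$ bounding $\zeta$ from above so that only finitely many $\zeta\in Q^+$ contribute on each weight space. In particular you correctly derive the finiteness from the \emph{second} tensor factor (the raising operators $u^\zeta_k$ push $\mu$ up against the upper bound on the weights of $M'$), which is the step that actually uses membership in $\mathcal O$; the first factor alone would not suffice since weights are not bounded below. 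No gaps.
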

 \begin{theorem}
\cite{BGH1} For $M, M^{\prime}$ to be finite-dimensional modules of $U_{r,s}(\mathfrak{g})$, the map
 $$R_{M^{\prime},M}=\Theta\circ \widetilde{f}\circ P: M^{\prime} \otimes M \rightarrow M\otimes M^{\prime}$$
 must be an isomorphism of $U_{r,s}(\mathfrak{g})$-module, where $P(m\otimes m^{\prime})=m^{\prime}\otimes m, m\in M, m^{\prime}\in M^{\prime}$.
 \end{theorem}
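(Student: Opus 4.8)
The plan is to view $R_{M',M}=\Theta\circ\widetilde f\circ P$ as a composite of three maps and to verify, separately, that it is a linear bijection and that it commutes with the $U_{r,s}(\mathfrak g)$-action. For bijectivity, first I would observe that each factor is invertible: the flip $P\colon M'\otimes M\to M\otimes M'$, $m'\otimes m\mapsto m\otimes m'$, is plainly a linear isomorphism; by Definition 2.3 the operator $\widetilde f$ acts on $M_\lambda\otimes M'_\mu$ as multiplication by the nonzero scalar $f(\lambda,\mu)=\langle w'_\mu,w_\lambda\rangle^{-1}\in k^*$, hence is invertible on each weight component; and, writing $\Theta=\sum_{\zeta\in Q^+}\Theta_\zeta$ with $\Theta_\zeta\in U_{r,s}^{-\zeta}(\mathfrak n^-)\otimes U_{r,s}^{+\zeta}(\mathfrak n^+)$, one has $\Theta_0=1\otimes1$, while for $\zeta\neq0$ the operator $\Theta_\zeta$ maps $M_\lambda\otimes M'_\mu$ into $M_{\lambda-\zeta}\otimes M'_{\mu+\zeta}$. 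Since $M$ and $M'$ are finite-dimensional they lie in $\mathcal O$, so by Lemma 2.5 only finitely many $\Theta_\zeta$ act nontrivially; the weight shift then forces $\Theta-\mathrm{id}$ to be a nilpotent operator on $M\otimes M'$, whence $\Theta$, and therefore $R_{M',M}$, is invertible.

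It then remains to show that $R_{M',M}$ intertwines the $U_{r,s}(\mathfrak g)$-module structures on $M'\otimes M$ and on $M\otimes M'$, both defined through $\Delta$, and for this it suffices to treat the generators $w_i^{\pm1},w_i'^{\pm1},e_i,f_i$. I would split the verification into three steps matching the three factors. Step 1: $P$ satisfies $P\,\Delta(x)=\Delta^{\mathrm{op}}(x)\,P$, so it carries the $\Delta$-action on $M'\otimes M$ to the $\Delta^{\mathrm{op}}$-action on $M\otimes M'$. Step 2: using the bicharacter identities $f(\lambda+\nu,\mu)=f(\lambda,\mu)f(\nu,\mu)$ and $f(\lambda,\mu+\nu)=f(\lambda,\mu)f(\lambda,\nu)$ of Definition 2.2 together with the Cartan relations (D2)--(D3), I would compute $\widetilde f\,\Delta^{\mathrm{op}}(x)\,\widetilde f^{-1}$ on weight vectors and show that conjugation by $\widetilde f$ turns $\Delta^{\mathrm{op}}$ into a twisted coproduct $\widetilde\Delta$ in which the ``Cartan scaling'' discrepancy between $\Delta^{\mathrm{op}}$ and $\Delta$ has been absorbed into values of $f$. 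Step 3: prove $\Theta\,\widetilde\Delta(x)=\Delta(x)\,\Theta$ as operators on $M\otimes M'$.

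For Step 3 I would argue componentwise in $\zeta\in Q^+$. For the group-like generators the identity is immediate from the weight grading. For $x=e_i$ — and, dually, for $x=f_i$ — it reduces to a recursion relating the homogeneous pieces $\Theta_\zeta$ and $\Theta_{\zeta-\alpha_i}$; this recursion holds because $\Theta_\zeta$ is, by construction, the canonical element determined by the nondegenerate Hopf pairing between $U_{r,s}^{+\zeta}(\mathfrak n^+)$ and $U_{r,s}^{-\zeta}(\mathfrak n^-)$ (Lemma 2.5), and that pairing is compatible with $\Delta$ and with left and right multiplication by $e_i$ and $f_i$. This is exactly the two-parameter analogue of the usual quasi-$R$-matrix computation; compare \cite{BGH1, BW}. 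Combining Steps 1--3 yields $R_{M',M}\,\Delta(x)=\Delta(x)\,R_{M',M}$ for every generator, hence for all $x\in U_{r,s}(\mathfrak g)$, and together with bijectivity this completes the proof.

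I expect Step 3 to be the main obstacle: one must pin down the precise recursion satisfied by the $\Theta_\zeta$ and carefully track the asymmetric roles played by $w_i$ versus $w_i'$ and by the structure constants $\langle w_i',w_j\rangle$, which is exactly where the two-parameter situation genuinely differs from the one-parameter one. By contrast, the invertibility of the three factors and the conjugation computation of Step 2 amount to routine bookkeeping with the weight grading and with the bicharacter $f$.
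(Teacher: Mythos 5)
The paper does not actually prove this statement: it is quoted from \cite{BGH1} with a citation and used as a black box, so there is no in-paper argument to compare yours against. Your outline is the standard quasi-$R$-matrix argument (the two-parameter analogue of the Lusztig--Jantzen construction), which is indeed the route taken in \cite{BGH1}: invertibility of $P$ and of $\widetilde f$ is immediate, invertibility of $\Theta$ follows from $\Theta_0=1\otimes 1$ together with the strict weight shift $M_\lambda\otimes M'_\mu\to M_{\lambda-\zeta}\otimes M'_{\mu+\zeta}$ and the local finiteness guaranteed by Lemma 2.5, and the intertwining property correctly factors through the flip, the bicharacter twist, and the identity $\Theta\,\widetilde\Delta(x)=\Delta(x)\,\Theta$. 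The one caveat is that your Step 3 is a plan rather than a proof: the recursion $\Delta(e_i)\,\Theta_\zeta-\Theta_\zeta\,\widetilde\Delta(e_i)=(\cdots)\,\Theta_{\zeta-\alpha_i}\,(\cdots)$ coming from the compatibility of the Hopf pairing with multiplication is exactly where the two-parameter asymmetry (the distinct roles of $w_i$ and $w_i'$ and the constants $\langle w_i',w_j\rangle$) enters, and you flag it but do not carry it out; as written, the proposal needs that computation, or an explicit appeal to the corresponding lemma of \cite{BGH1}, to be complete. Since the paper itself offers no proof here, your sketch is nonetheless consistent with, and more detailed than, what the paper provides.
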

 \begin{theorem}
\cite{BGH1} For $M,M^{\prime},M^{\prime\prime}$ to be finite-dimensional modules of $U_{r,s}(\mathfrak{g})$, the map must satisfy
 $$\Theta^{f}_{12}\circ\Theta^{f}_{13}\circ\Theta^{f}_{23}=\Theta^{f}_{23}\circ\Theta^{f}_{13}\circ\Theta^{f}_{12}.$$
 Equivalently, we have
 $$R_{12}\circ R_{23}\circ R_{12}=R_{23}\circ R_{12}\circ R_{23}.$$
 \end{theorem}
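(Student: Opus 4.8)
The plan is to deduce the quantum Yang--Baxter equation in its braid form by the standard categorical route: first establish the two \emph{hexagon identities} expressing $R_{M',M}=\Theta\circ\widetilde f\circ P$ on a tensor product in terms of compositions of $R$'s on the factors, then observe that $R_{-,-}$ is natural in both arguments, and finally combine these two facts formally to obtain $R_{12}\circ R_{23}\circ R_{12}=R_{23}\circ R_{12}\circ R_{23}$ (and hence, after translating between the braiding convention and the $\Theta^f_{ij}$ convention, the displayed identity $\Theta^f_{12}\circ\Theta^f_{13}\circ\Theta^f_{23}=\Theta^f_{23}\circ\Theta^f_{13}\circ\Theta^f_{12}$). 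Throughout I may assume the previous theorem, namely that each $R_{M',M}$ is an isomorphism of $U_{r,s}(\mathfrak g)$-modules.

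The first and main step is the coproduct behaviour of the quasi-$R$-matrix $\Theta=\sum_{\zeta\in Q^+}\Theta_\zeta$. Using the skew-Hopf pairing between $U_{r,s}(\mathfrak n^+)$ and $U_{r,s}(\mathfrak n^-)$ that defines the dual bases $\{u^\zeta_k\}$, $\{v^\zeta_k\}$, together with the compatibility of $\Delta$ on $U_{r,s}(\mathfrak n^\pm)$ with this pairing, one derives cabling identities of the shape $(\Delta\otimes\mathrm{id})\Theta=\Theta_{13}\,\Theta_{23}$ and $(\mathrm{id}\otimes\Delta)\Theta=\Theta_{13}\,\Theta_{12}$, each corrected by weight factors built from the grading elements $\omega_\zeta,\omega_\zeta'$ attached to the homogeneous pieces $U_{r,s}^{\pm\zeta}(\mathfrak n^\pm)$. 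The point is that on weight spaces these corrections are \emph{exactly} the scalars supplied by $\widetilde f$: the bicharacter relations $f(\lambda+\nu,\mu)=f(\lambda,\mu)f(\nu,\mu)$ and $f(\lambda,\mu+\nu)=f(\lambda,\mu)f(\lambda,\nu)$ from the definition of $f$ are precisely what is needed for the $\omega_\zeta,\omega_\zeta'$-factors to reassemble into $\widetilde f$-factors on the relevant legs. Feeding these $f$-twisted cabling identities, together with the flip $P$, into the definition $R_{M',M}=\Theta\circ\widetilde f\circ P$ yields the hexagon identities
\[
R_{M\otimes N,\,L}=(R_{M,L})_{\,\bullet}\circ(R_{N,L})_{\,\bullet},\qquad
R_{M,\,N\otimes L}=(R_{M,L})_{\,\bullet}\circ(R_{M,N})_{\,\bullet},
\]
where the marked subscripts record on which tensor legs each braiding acts.

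For the second step, naturality: since $\Theta$ acts locally nilpotently on any $M\otimes M'$ with $M,M'\in\mathcal O^{r,s}$ and $\widetilde f$ is defined by scalars on weight spaces, for any $U_{r,s}(\mathfrak g)$-module maps $\phi,\psi$ one has $(\phi\otimes\psi)\circ R_{M',M}=R_{N',N}\circ(\psi\otimes\phi)$. In the final step one evaluates $R$ on a triple tensor product (e.g.\ $M\otimes M\otimes M$, or $M''\otimes M'\otimes M$): expanding $R_{M''\otimes M',\,M}$ by the first hexagon and $R_{M'',\,M'\otimes M}$ by the second, and pushing the results through naturality and the interchange law of the tensor product, forces the two reduced words in the braidings to coincide, which is exactly $R_{12}\circ R_{23}\circ R_{12}=R_{23}\circ R_{12}\circ R_{23}$. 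The hard part is the first step: because in the two-parameter setting $f$ is genuinely asymmetric, $f(\lambda,\mu)\neq f(\mu,\lambda)$ in general, one must keep scrupulous track of which of $\omega_\zeta$ and $\omega_\zeta'$ acts on which tensor leg when commuting $\Delta$ past $\Theta$; once the $f$-twisted cabling identities are verified with the correct placement of these factors, everything else is the formal ``Yang--Baxter-from-hexagon'' argument.
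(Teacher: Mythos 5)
The paper does not prove this theorem at all: it is quoted verbatim from \cite{BGH1}, so there is no in-paper argument to compare against. Your outline --- the $f$-twisted cabling identities for $\Theta$ obtained from the skew pairing and the bicharacter property of $f$, then the hexagon identities for $R_{M',M}=\Theta\circ\widetilde f\circ P$, then naturality, then the formal passage to the braid relation --- is precisely the standard Lusztig--Jantzen-style route that the cited reference follows in the two-parameter setting, so your approach is correct and essentially the same as the source's.
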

 \begin{remark}
\rm Denote $\Theta^{f}_{12}=(\Theta_{M,M^{\prime}}\circ\widetilde{f})\otimes1,$ $\Theta^{f}_{23}=1\otimes(\Theta_{M^{\prime},M^{\prime\prime}}\circ\widetilde{f}),$
$\Theta^{f}_{13}=(1\otimes P)\circ(\Theta^{f}\otimes 1)\circ (1\otimes P)$. Also, $\Theta\circ\widetilde{f}$ is the solution of the quantum Yang-Baxter equation.
 \end{remark}

\section{Basic $R$-matrix}
In this section, we consider the $R$-matrix $R$ := $R_{V,V}$ for the vector representation $T_1$ = $T_V$ of the Drinfeld-Jimbo algebra $U=U_{r,s}\mathcal(\mathfrak {so}_{2n})$.
\begin{defi}
\rm The vector representation $T_1$ of the Drinfeld-Jimbo algebra $U$ is the irreducible type $1$ representation with highest weight $\lambda=(1,0,\dots,0)$ with respect to the ordered sequence $\alpha_1,\dots,\alpha_n$ of simple roots.
\end{defi}
Consider a $2n$-dimensional vector space $V$ over $\mathbb{K}$ with basis $\{v_j \mid 1 \le j \le 2n\}$.
We define an action of the generators of $U=U_{r,s}\mathcal(\mathfrak{so}_{2n})$ by specifying their matrices relative to this basis:
\begin{lemma}
Let $E_{kl}$ be the $2n\times 2n$ matrix with $1$ in the $(k,l)$-position and $0$ elsewhere. The vector representation $T_1$ of $U_{r,s}\mathcal(\mathfrak{so}_{2n})$ is described by the following list\\
$(B1)$
\begin{gather*}
T_{1}(e_{i})=E_{i,i+1}-r^{-\frac{1}{2}}s^{-\frac{1}{2}}E_{(i+1)^{\prime},i^{\prime}},\quad T_{1}(e_{n})=E_{n,n+2}-r^{-\frac{1}{2}}s^{-\frac{1}{2}}E_{(n+2)^{\prime},n^{\prime}},\\
T_{1}(f_{i})=E_{i+1,i}-r^{-\frac{1}{2}}s^{-\frac{1}{2}}E_{i^{\prime},(i+1)^{\prime}},\quad T_{1}(f_{n})=E_{n+2,n}-r^{-\frac{1}{2}}s^{-\frac{1}{2}}E_{n^{\prime},(n+2)^{\prime}},
\end{gather*}
$(B2)$
\begin{gather*}
T_{1}(w_{i})=rE_{i,i}{+}sE_{i+1,i+1}
{+}s^{-1}E_{(i+1)^{\prime},(i+1)^{\prime}}{+}r^{-1}E_{i^{\prime},i^{\prime}}{+}\sum_{j\neq \{i,i+1,i^{\prime},(i+1)^{\prime}\}}E_{j,j},\\
T_{1}(w^{\prime}_{i})=sE_{i,i}{+}rE_{i+1,i+1}
{+}r^{-1}E_{(i+1)^{\prime},(i+1)^{\prime}}{+}
s^{-1}E_{i^{\prime},i^{\prime}}{+}\sum_{j\neq \{i,i+1,i^{\prime},(i+1)^{\prime}\}}E_{j,j},
\end{gather*}
$(B3)$
\begin{gather*}
T_{1}(w_{n})=s^{-1}E_{n-1,n-1}+rE_{n,n}+r^{-1}E_{n^{\prime},n^{\prime}}+sE_{(n-1)^{\prime},(n-1)^{\prime}}+r^{-1}s^{-1}E_{1,1}\\
            +r^{-1}s^{-1}\sum_{1\le j\le n-2}E_{j,j}+rs\sum_{1\le j\le n-2}E_{j^{\prime},j^{\prime}},\\
T_{1}(w^{\prime}_{n})=r^{-1}E_{n-1,n-1}+sE_{n,n}+s^{-1}E_{n^{\prime},n^{\prime}}+rE_{(n-1)^{\prime},(n-1)^{\prime}}+r^{-1}s^{-1}E_{1,1}\\
            +r^{-1}s^{-1}\sum_{1\le j\le n-2}E_{j,j}+rs\sum_{1\le j\le n-2}E_{j^{\prime},j^{\prime}},
\end{gather*}
where $1\le i,j\le n-1$, and $i^{\prime}=2n+1-i$.
\end{lemma}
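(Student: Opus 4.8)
The plan is to verify directly that the matrices in (B1)--(B3) satisfy the defining relations (D1)--(D6), so that they define a representation of $U$ on $V$, and then to recognize $V$ as the irreducible type $1$ module of highest weight $\lambda=(1,0,\dots,0)$, which is the vector representation $T_1$ by definition. It is convenient to fix the weight grading $V=\bigoplus_{k=1}^{2n}\mathbb{K}v_k$ in which $v_k$ has weight $\epsilon_k$ for $1\le k\le n$ and $v_{i'}$ has weight $-\epsilon_i$, where $i'=2n+1-i$. Three elementary facts of type $D_n$ combinatorics should be recorded at the outset and used repeatedly: the inner products $(\epsilon_j,\alpha_i)=\delta_{ij}-\delta_{i+1,j}$ for $i<n$ and $(\epsilon_j,\alpha_n)=\delta_{j,n-1}+\delta_{j,n}$; the involution $i\mapsto i'$ with its effect at the fork node, $n'=n+1$ and $(n-1)'=n+2$; and the multiplication rule $E_{ab}E_{cd}=\delta_{bc}E_{ad}$ for matrix units.

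The relations (D1)--(D3) are essentially automatic. In (D1), the matrices $T_1(w_i),T_1(w_i')$ are diagonal, hence commute and are invertible with the stated inverses. For (D2)--(D3), note that $T_1(e_i)$ is a sum of two matrix units carrying a weight vector into a weight vector, and a direct check shows $T_1(e_i)$ maps $V_\mu$ into $V_{\mu+\alpha_i}$; since $T_1(w_j)$ acts on $V_\mu$ by the scalar $r^{(\epsilon_j,\mu)}s^{(\epsilon_{j+1},\mu)}$ (and $T_1(w_j'),T_1(w_n),T_1(w_n')$ by the corresponding scalars, with the evident sign on the last $\epsilon$-coordinate), the conjugation $T_1(w_j)T_1(e_i)T_1(w_j)^{-1}$ rescales $T_1(e_i)$ by the ratio of the scalars on $V_{\mu+\alpha_i}$ and $V_\mu$, which by multiplicativity of $\mu\mapsto r^{(\epsilon_j,\mu)}s^{(\epsilon_{j+1},\mu)}$ equals $r^{(\epsilon_j,\alpha_i)}s^{(\epsilon_{j+1},\alpha_i)}$; comparison with the exponents in (D2)--(D3) is then exactly the inner-product identities above, and the $f_i$-relations follow by the same argument with signs reversed.

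The relations (D4)--(D6) carry the real content. For (D4) with $i\ne j$ one checks, using $E_{ab}E_{cd}=\delta_{bc}E_{ad}$ together with the fact that the indices $i+1$, $i'$, $(i+1)'$ never collide in the way a nonzero product would require, that none of the four products making up $T_1(e_i)T_1(f_j)$ or $T_1(f_j)T_1(e_i)$ is composable, so the bracket vanishes; for $i=j$ the four ``diagonal'' products survive and give $E_{ii}-E_{i+1,i+1}+(rs)^{-1}\bigl(E_{(i+1)',(i+1)'}-E_{i'i'}\bigr)$, the factor $(rs)^{-1}$ being the square of the coefficient $r^{-\frac12}s^{-\frac12}$ in $T_1(e_i),T_1(f_i)$; this is matched with the diagonal matrix on the right-hand side of (D4), the identity $\tfrac{s^{-1}-r^{-1}}{r-s}=(rs)^{-1}$ on the primed slots being precisely what singles out the half-integer powers. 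The computation at the fork node $i=n$ is the same after the substitutions $n'=n+1$, $(n-1)'=n+2$. For (D5)--(D6) the crucial observation is $T_1(e_i)^2=0$ (and $T_1(f_i)^2=0$) for every $i$, once more because no two of the four products in the square compose; this collapses the quartic $(r,s)$- and $(r^{-1},s^{-1})$-Serre relations to $T_1(e_i)T_1(e_j)T_1(e_i)=0=T_1(e_j)T_1(e_i)T_1(e_j)$ whenever $a_{ij}=-1$, which one verifies by the same bookkeeping (including the pair $\{n-2,n\}$ at the fork). The vanishing brackets of (D5) for $a_{ij}=0$ are again non-composability, while the one genuinely two-parameter identity, $T_1(e_{n-1})T_1(e_n)=rs\,T_1(e_n)T_1(e_{n-1})$, comes out because exactly one cross product survives on each side, producing $E_{n-1,n+2}$ on the left and $(rs)^{-1}E_{n-1,n+2}$ on the right; the $f$-analogue is identical.

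It remains to identify $V$ with $T_1$. The vector $v_1$ is annihilated by every $T_1(e_i)$ (the two matrix units in $T_1(e_i)$ act nontrivially only on $v_{i+1}$ and $v_{i'}$, neither of which is $v_1$) and has weight $\epsilon_1=(1,0,\dots,0)$; applying the lowering operators $T_1(f_i)$ in succession from $v_1$ produces, up to nonzero scalars, every $v_k$, so $V$ is a highest-weight module; since every weight space is at most one-dimensional and the weights form the single Weyl orbit $\{\pm\epsilon_i\}$, $V$ is irreducible, and it is of type $1$ since the $w_i$ act by the unshifted characters. Hence $V\cong T_1$ and the matrices (B1)--(B3) describe $T_1$. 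The only real difficulty is organizational: keeping the involution $i\mapsto 2n+1-i$ and the special behavior of $\alpha_n=\epsilon_{n-1}+\epsilon_n$ straight so that the case analyses in (D4)--(D6) around the fork node are exhaustive, and confirming that the normalization $r^{-\frac12}s^{-\frac12}$ is exactly the one for which (D4) and the Serre relations close up.
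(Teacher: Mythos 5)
Your overall strategy is exactly the paper's: verify (D1)--(D6) directly on the matrices and then identify $v_1$ as a highest weight vector of weight $\epsilon_1$ to conclude $V\cong T_1$. The paper compresses the first step into ``by straightforward calculations one checks,'' so your fleshed-out version of the bookkeeping (the nilpotency $T_1(e_i)^2=0$ collapsing the Serre relations, the single surviving cross term giving $T_1(e_{n-1})T_1(e_n)=rs\,T_1(e_n)T_1(e_{n-1})$, the diagonal products in (D4)) is all consistent with what the paper intends, and your identity $\tfrac{s^{-1}-r^{-1}}{r-s}=(rs)^{-1}$ is indeed the one that makes (D4) close on the primed slots --- note that this tacitly uses the Cartan term $\tfrac{w_i-w_i'}{r-s}$ rather than the $\tfrac{w_i-w_i'^{-1}}{r-s}$ printed in (D4); the former is what the matrices in (B2) actually satisfy (the highest weight vector gives $e_1f_1v_1=v_1$ versus $\tfrac{r-s^{-1}}{r-s}v_1$ under the printed version), so you have silently adopted the convention that works.

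There is, however, one concrete error at the fork node. You assert that $T_1(w_n)$ acts on $V_\mu$ by ``the corresponding scalar with the evident sign on the last $\epsilon$-coordinate,'' i.e.\ by $r^{(\epsilon_{n-1},\mu)}s^{-(\epsilon_n,\mu)}$. This contradicts (B3): on $v_{n-1}$ (weight $\epsilon_{n-1}$) your formula gives $r$, while the coefficient of $E_{n-1,n-1}$ in $T_1(w_n)$ is $s^{-1}$; on $v_1$ your formula gives $1$, while (B3) gives $r^{-1}s^{-1}$. Worse, if $T_1(w_n)$ really acted by your character, the conjugation ratio along $\alpha_{n-1}$ would be $r^{(\epsilon_{n-1},\alpha_{n-1})}s^{-(\epsilon_n,\alpha_{n-1})}=rs$, whereas the special relation (D3) demands $r^{(\epsilon_n,\alpha_{n-1})}s^{-(\epsilon_{n-1},\alpha_{n-1})}=r^{-1}s^{-1}$ --- so your stated eigenvalues would make (D3) fail. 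The point is precisely that $w_n$ does \emph{not} obey a uniform exponent pattern: (D2) governs $w_ne_k$ for $k\neq n-1$ and (D3) governs $w_ne_{n-1}$ with $\epsilon_{n-1}$ and $\epsilon_n$ interchanged, and the eigenvalues in (B3) (namely $r$ on $v_n$, $s^{-1}$ on $v_{n-1}$, $r^{-1}s^{-1}$ on $v_j$ for $j\le n-2$, and reciprocals on the primed vectors) are exactly the multiplicative character whose ratios reproduce both. The fix is mechanical --- read the eigenvalues off (B3) and check the ratio along each $\alpha_k$ against (D2) for $k\neq n-1$ and against (D3) for $k=n-1$ --- but as written your parenthetical is false, and since you yourself identify the fork-node case analysis as the only real difficulty, this is the one step that must be redone.
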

\begin{proof}
By straightforward calculations one checks that the preceding formulas define a weight representation $T_1$ of the algebra $U_{r,s}\mathcal(\mathfrak{so}_{2n})$ on the vector space $\mathbb{C}^{2n}$. For the basis vector $\mathbf{e}_1=(1,0,\cdots,0)$, we easily verify that $T_1(E_j)\mathbf{e}_1=0$, $T_1(w_1)\mathbf{e}_1=r\mathbf{e}_1$, $T_1(w_i)\mathbf{e}_1=\mathbf{e}_1$, $T_1(w_n)\mathbf{e}_1=r^{-1}s^{-1}\mathbf{e}_1$, $T_1(w^{\prime}_1)\mathbf{e}_1=s\mathbf{e}_1$, $T_1(w^{\prime}_i)\mathbf{e}_1=\mathbf{e}_1$, $T_1(w^{\prime}_n)\mathbf{e}_1=rs\mathbf{e}_1$, for $1\le j\le n$, and $2\le i\le n-1$. Hence $T_1$ is the type $1$ representation with highest weight $\lambda=\alpha_1+\alpha_2+\cdots+\frac{1}{2}(\alpha_{n-1}+\alpha_n)=\epsilon_1$. Thus $T_1$ is indeed the vector representation of $U_{r,s}\mathcal(\mathfrak{so}_{2n})$.
\end{proof}

We illustrate through the following lemmas that the module $V\otimes V$ is decomposed into the direct sum of three simple submodules, $S^{o}(V\otimes V), S^{\prime}(V\otimes V)$ and $\Lambda(V\otimes V)$. These modules are defined and proved by the following three lemmas.
\begin{lemma}
The module $S^{o}(V\otimes V)$ generated by $\sum\limits_{i=1}^{2n}a_{i}v_{i^{\prime}}\otimes v_i$ is simple, and $a_i$ satisfies\\
$\text{\rm (i)}$ \,\ $a_i=(rs^{-1})^{\frac{n-i}{2}}$, $1\le i\le n$,\\
$\text{\rm (ii)}$\; $a_i=1$, $i=n+1$,\\
$\text{\rm (iii)}$ $a_i=(rs^{-1})^{\frac{n-i+1}{2}}$, $n+2\le i\le 2n$.
\end{lemma}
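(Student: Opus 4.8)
The plan is to exhibit a distinguished weight-zero vector $u=\sum_{i}a_i\,v_{i'}\otimes v_i$ in $V\otimes V$, check that it is a highest-weight vector, and then argue that the submodule it generates is irreducible by a weight-space count. First I would fix the ordering convention $1<2<\cdots<n<n'<\cdots<2'<1'$ so that the relabelling $j\mapsto j'$ is the flip $j'=2n+1-j$ already used in the $T_1$-formulas, and note that $v_{i'}\otimes v_i$ has weight $\epsilon_i-\epsilon_i=0$ for $1\le i\le n$ (with the two middle basis vectors $v_{n+1}=v_{n'}$ and $v_{n+2}$ having weights $-\epsilon_n$ and $\epsilon_n$ respectively, so $v_{(n+1)'}\otimes v_{n+1}$ and $v_{(n+2)'}\otimes v_{n+2}$ also have weight $0$). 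Thus $u$ lies in the zero weight space of $V\otimes V$, which is $(2n)$-dimensional, spanned by the vectors $v_{i'}\otimes v_i$.

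Next I would impose that $u$ be a highest-weight vector, i.e. $T_1\!\otimes\!T_1$ of $\Delta(e_k)=e_k\otimes 1+w_k\otimes e_k$ annihilates $u$ for every $k$. Using the explicit matrices $(B1)$--$(B3)$ from the preceding lemma, each equation $(e_k\otimes1+w_k\otimes e_k)u=0$ becomes a linear recursion among consecutive coefficients $a_i$: the simple-root generators $e_i$ ($1\le i<n$) force the ratio $a_i/a_{i+1}$ (and $a_{i'}/a_{(i+1)'}$) to equal a fixed power of $r^{\pm1/2}s^{\mp1/2}$, coming from the two terms $E_{i,i+1}$ and $-r^{-1/2}s^{-1/2}E_{(i+1)',i'}$ in $T_1(e_i)$ together with the eigenvalues of $T_1(w_i)$ on the relevant basis vectors; the spin node $e_n$ links the coefficient at the middle index $n+1$ to the one at $n+2$, and this is exactly what pins down normalization (ii), $a_{n+1}=1$. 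Solving the recursion, with the normalization $a_{n+1}=1$, yields precisely $a_i=(rs^{-1})^{(n-i)/2}$ for $1\le i\le n$ and $a_i=(rs^{-1})^{(n-i+1)/2}$ for $n+2\le i\le 2n$, i.e. the three cases (i)--(iii) in the statement. One then checks that this single solution indeed kills all the $\Delta(e_k)$ simultaneously (the recursions are consistent because the Cartan data of $D_n$ make the overlapping conditions compatible), so $u$ is, up to scalar, the unique highest-weight vector of weight $0$ in $V\otimes V$.

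Finally, for simplicity of $S^{o}(V\otimes V):=U\cdot u$: since $u$ has weight $0$ and is highest-weight, $S^{o}(V\otimes V)$ is a highest-weight module; its highest weight $0=\theta-\theta$ is the weight of the trivial-type piece, and I would identify $S^o$ with the simple module $L(0)$ by showing $u$ also generates, under the $f_k$'s, a vector that is lowest-weight of the same weight — equivalently, by computing $\dim S^{o}(V\otimes V)$. The cleanest route is the decomposition $V\otimes V=S^{o}(V\otimes V)\oplus S'(V\otimes V)\oplus\Lambda(V\otimes V)$ that the paper is setting up over the next two lemmas: once the other two summands are produced with their dimensions $\binom{2n+1}{2}-1$ and $\binom{2n}{2}$, a dimension count forces $\dim S^{o}(V\otimes V)=1$, whence it is automatically simple. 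Alternatively, and independently of the companion lemmas, one shows directly that any nonzero submodule of $U\cdot u$ must contain a nonzero weight-$0$ vector, hence (by uniqueness of the weight-$0$ highest-weight line just established, applied inside the semisimple category $\mathcal{O}^{r,s}$) must contain $u$ itself; this gives simplicity without the dimension bookkeeping.

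The main obstacle I anticipate is purely computational bookkeeping rather than conceptual: carefully tracking the many $r^{\pm1/2}s^{\pm1/2}$ factors through the two-parameter coproduct $\Delta(e_k)=e_k\otimes1+w_k\otimes e_k$ — in particular getting the $w_k$-eigenvalues on $v_{i'}$ versus $v_i$ right, and handling the asymmetry at the last two nodes $\alpha_{n-1},\alpha_n$ (where the weight lattice folds) so that the recursion closes up consistently and produces exactly the exponents $(n-i)/2$ and $(n-i+1)/2$ with the stated break at $i=n+1$.
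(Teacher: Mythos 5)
Your proposal is correct in substance, and the heart of it --- imposing $(e_k\otimes 1+w_k\otimes e_k)u=0$ for the vector $u=\sum_i a_i\,v_{i'}\otimes v_i$ and solving the resulting two-term cancellation conditions, with the spin node $\alpha_n$ supplying the normalization at the middle index --- is exactly the computation the paper performs to pin down the exponents $(n-i)/2$ and $(n-i+1)/2$. Where you diverge is in how simplicity is concluded. The paper's route is more direct than either of your two alternatives: it also verifies that every $f_k$ annihilates $u$ and that $w_k,w_k'$ act on $u$ as the identity, so by the triangular decomposition $U\cong U(\mathfrak n^-)\otimes U^0\otimes U(\mathfrak n)$ the submodule $U\cdot u$ is literally the one-dimensional span $\mathbb{K}u$, which is simple for free. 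Your first route (a dimension count $1+\bigl(\binom{2n+1}{2}-1\bigr)+\binom{2n}{2}=4n^2$ using the other two summands) is numerically consistent but forward-references Lemmas 3.4--3.6, which have not yet been established at this point and whose proof of directness is itself deferred to the monoidal equivalence with the one-parameter case; your second route needs the unjustified claim that a nonzero submodule of $U\cdot u$ contains a weight-$0$ vector, which really amounts to already knowing $U\cdot u\cong L(0)$ is one-dimensional. Both are repairable, but checking $f_k\,u=0$ directly (a computation symmetric to the one you already set up for $e_k$, using $\Delta(f_k)=1\otimes f_k+f_k\otimes w_k'$) short-circuits all of this. One small slip worth fixing: with $i'=2n+1-i$ the vector $v_{n+2}=v_{(n-1)'}$ has weight $-\epsilon_{n-1}$, not $\epsilon_n$; this does not affect your conclusion that all $v_{i'}\otimes v_i$ lie in the zero weight space.
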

\begin{proof}
The operators $e_k,f_k,w_k,w^{\prime}_k$ act on $\sum\limits_{i=1}^{2n}a_{i}v_{i^{\prime}}\otimes v_i$, then the following computations show that $S^{o}(V\otimes V)$ is a simple module:
\begin{flalign*}
e_k.\,\Bigl(\sum\limits_{i=1}^{2n}a_{i}v_{i^{\prime}}\otimes v_i\Bigr)&=(e_k\otimes1+w_k\otimes e_k)\cdot\Bigl(\sum\limits_{i=1}^{2n}a_{i}v_{i^{\prime}}\otimes v_i\Bigr)\\
&=(a_{(k+1)^{\prime}}-a_{k^{\prime}}r^{\frac{1}{2}}s^{-\frac{1}{2}})\,v_k\otimes v_{(k+1)^{\prime}}\\
&\quad +(a_{k+1}s^{-1}-r^{-\frac{1}{2}}s^{-\frac{1}{2}}a_k)\, v_{(k+1)^{\prime}}\otimes v_k\\
&=0,
\end{flalign*}
and
\begin{flalign*}
e_n.\,\Bigl(\sum\limits_{i=1}^{2n}a_{i}v_{i^{\prime}}\otimes v_i\Bigr)&=(e_n\otimes1+w_n\otimes e_n)\cdot\Bigl(\sum\limits_{i=1}^{2n}a_{i}v_{i^{\prime}}\otimes v_i\Bigr)\\
&=(a_{n^{\prime}}r^{\frac{1}{2}}s^{-\frac{1}{2}}-a_{(n+2)^{\prime}})\,v_n\otimes v_{(n+2)^{\prime}}\\
&\quad +(r^{-\frac{1}{2}}s^{-\frac{1}{2}}a_n-a_{(n+2)^{\prime}}s^{-1})\, v_{(n+2)^{\prime}}\otimes v_n\\
&=0.
\end{flalign*}
Moreover,
\begin{flalign*}
w_k.\,\Bigl(\sum\limits_{i=1}^{2n}a_{i}v_{i^{\prime}}\otimes v_i\Bigr)=\sum\limits_{i=1}^{2n}a_{i}v_{i^{\prime}}\otimes v_i,\\
w^{\prime}_k.\,\Bigl(\sum\limits_{i=1}^{2n}a_{i}v_{i^{\prime}}\otimes v_i\Bigr)=\sum\limits_{i=1}^{2n}a_{i}v_{i^{\prime}}\otimes v_i,
\end{flalign*}
and for $1\le k\le n$. By similar calculations as above, we know that the operators $f_k$ act trivially on $\sum\limits_{i=1}^{2n}a_{i}v_{i^{\prime}}\otimes v_i$. So, it is easy to see that $S^{o}(V\otimes V)$ is simple.
\end{proof}
\begin{lemma}
The simple module $S^{\prime}(V\otimes V)$ is defined as follows\\
$\text{\rm (i)}$\;\;\  $v_i\otimes v_i$, $1\le i\le 2n$,\\
$\text{\rm (ii)}$\; $v_i\otimes v_j+sv_j\otimes v_i$, $1\le i\le n$ and $i+1\le j\le n$ or $i^{\prime}+1\le j\le 2n$,\\
$\text{\rm (iii)}$\, $v_i\otimes v_j+r^{-1}v_j\otimes v_i$, $1\le i\le n-1$, $n+1\le j\le 2n-i$ or $n+1\le i\le 2n-1$, $i+1\le j\le 2n$\\
$\text{\rm (iv)}$\, $v_i\otimes v_{i^{\prime}}+r^{-1}sv_{i^{\prime}}\otimes v_i-(r^{-1}s)^{\frac{1}{2}}(v_{(i+1)^{\prime}}\otimes v_{i+1}+v_{i+1}\otimes v_{(i+1)^{\prime}})$, $1\le i\le n-1$,\\
where $v_1\otimes v_1$ is the highest weight vector.
\end{lemma}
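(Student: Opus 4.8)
The plan is to realize $S^{\prime}(V\otimes V)$ as the cyclic $U$-submodule $W_0:=U.(v_1\otimes v_1)$ of $V\otimes V$, to show that $W_0$ coincides with the span $W$ of the vectors listed in (i)--(iv), and then to deduce simplicity from the weight combinatorics of $V\otimes V$ together with the complete reducibility of finite-dimensional modules in $\mathcal{O}^{r,s}$ established in \cite{BGH1}. The bulk of the work is the middle identification $W_0=W$.

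First I would check that $v_1\otimes v_1$ is a highest weight vector. Applying $\Delta(e_i)=e_i\otimes 1+w_i\otimes e_i$ and using that $T_1(e_i)v_1=0$ for every $i$ (since $v_1$ is the highest weight vector of $V$), one gets $e_i.(v_1\otimes v_1)=0$; as each $w_j,w_j'$ is group-like, $v_1\otimes v_1$ is a weight vector of weight $2\epsilon_1$, so $W_0$ is a highest weight module with highest weight $2\epsilon_1$. Next I would show $W\subseteq W_0$ by producing every vector of (i)--(iv) from $v_1\otimes v_1$ by repeated application of the lowering operators $f_i$, organised along the Dynkin diagram of $D_n$. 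Using $\Delta(f_i)=1\otimes f_i+f_i\otimes w_i'$ and the matrices $T_1(f_i),T_1(w_i')$, a single application turns a monomial $v_a\otimes v_b$ into a two-term combination, and descending one arm of the diagram generates the symmetric-type vectors of (ii) and (iii) (the coefficients $s$, resp.\ $r^{-1}$, coming from the $w_i'$-factors), while passing through the fork $\{\alpha_{n-1},\alpha_n\}$ and invoking the relation $f_nf_{n-1}=rsf_{n-1}f_n$ of (D5), together with the half-integer powers $r^{-\frac12}s^{-\frac12}$ appearing in $T_1(f_n),T_1(f_i)$ and the accompanying $w_i'$-factors, produces precisely the four-term vectors of (iv) with coefficient $(r^{-1}s)^{\frac12}$.

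Conversely I would verify that $W$ is a $U$-submodule: it is visibly stable under $w_i^{\pm1},w_i'^{\pm1}$ since its spanning set consists of weight vectors, and one checks case by case, using $\Delta$ and the explicit action from the lemma describing $T_1$, that $e_i$ and $f_i$ carry each listed generator back into $W$; the cancellations that actually require the identities among $r^{\pm1/2},s^{\pm1/2}$ occur for the pairs $(i,i^{\prime})$ and near the branch node. Combined with the previous paragraph this yields $W=W_0$. For simplicity, note that the weights of $V$ are $\pm\epsilon_1,\dots,\pm\epsilon_n$ each of multiplicity one, so $2\epsilon_1$ is the unique maximal weight of $V\otimes V$ and occurs there with multiplicity one (only via $v_1\otimes v_1$). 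Writing $V\otimes V=\bigoplus_j L(\lambda_j)$ by complete reducibility, the vector $v_1\otimes v_1$ must lie entirely in the unique summand with $\lambda_j=2\epsilon_1$ and, being annihilated by all $e_i$, is a highest weight vector of it; hence $W_0=U.(v_1\otimes v_1)=L(2\epsilon_1)$ is irreducible. As a consistency check, counting the listed vectors gives $\dim W=2n+2n(n-1)+(n-1)=2n^2+n-1=\dim L(2\epsilon_1)$. Therefore $S^{\prime}(V\otimes V):=W$ is the simple module described by (i)--(iv), with highest weight vector $v_1\otimes v_1$.

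The main obstacle is the bookkeeping in the identification $W_0=W$: tracking how a two-term vector propagates under successive $f_i$ along the two arms of the $D_n$ diagram, and in particular handling the transition at the fork $\{\alpha_{n-1},\alpha_n\}$, where the relation (D5) and the half-powers $r^{\pm1/2}s^{\pm1/2}$ in $T_1(e_n),T_1(f_n)$ conspire to create the four-term vectors (iv); one must check that all the cross terms cancel so that $W$ is genuinely $e_i$- and $f_i$-stable, which is the step most prone to coefficient errors.
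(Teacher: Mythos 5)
Your proposal is correct, and its computational core coincides with the paper's proof: the paper verifies, case by case via $\Delta(e_k)$, $\Delta(f_k)$ and the matrices of Lemma~3.2, that $e_k$ and $f_k$ carry each vector of (i)--(iv) back into the span of (i)--(iv) --- exactly your submodule-stability check, including the delicate spots you flag, namely the pairs $(i,i^{\prime})$ and the branch node $\{\alpha_{n-1},\alpha_n\}$ where the four-term vectors (iv) with coefficient $(r^{-1}s)^{\frac12}$ arise. Where you genuinely add something is at the end: the paper concludes with ``it is easy to see that $S^{\prime}(V\otimes V)$ is simple,'' whereas you supply an actual argument --- $v_1\otimes v_1$ spans the one-dimensional $2\epsilon_1$-weight space, $2\epsilon_1$ is the unique maximal weight of $V\otimes V$, and complete reducibility forces $U.(v_1\otimes v_1)=L(2\epsilon_1)$ to be irreducible --- backed by the dimension count $2n+2n(n-1)+(n-1)=2n^2+n-1=\dim L(2\epsilon_1)$. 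Two small adjustments: complete reducibility of $V\otimes V$ should be sourced the way the paper does in Lemma~3.6, i.e.\ from the monoidal equivalence of $\mathcal O^{r,s}$ with the one-parameter category \cite{HP} together with the known $U_q(\mathfrak{so}_{2n})$ decomposition \cite{KS}, rather than attributed to \cite{BGH1}; and once you have $\dim L(2\epsilon_1)=2n^2+n-1$ from that equivalence, your dimension count is more than a consistency check --- it lets you skip the laborious $W\subseteq W_0$ generation argument altogether, since the submodule $W$ spanned by the $2n^2+n-1$ listed vectors contains the irreducible $W_0$ of the same dimension and hence equals it.
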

\begin{proof}
Operators $e_k,f_k$ act on vectors in (i)-(iv). The following computations show that $S^{\prime}(V\otimes V)$ is a simple module, because for case (i), we have
\begin{flalign*}
e_k.\,(v_i\otimes v_i)=\left\{
\begin{aligned}
&\delta_{i,k+1}(v_k\otimes v_{k+1}+sv_{k+1}\otimes v_k),\\
&-\delta_{i,k^{\prime}}(rs)^{-\frac{1}{2}}(v_{i-1}\otimes v_i+r^{-1}v_i\otimes v_{i-1}), \\
\end{aligned}\right.
\end{flalign*}
or
\begin{flalign*}
f_k.\,(v_i\otimes v_i)=\left\{
\begin{aligned}
&\delta_{i,k}(v_k\otimes v_{k+1}+sv_{k+1}\otimes v_k),\\
&-\delta_{i,(k+1)^{\prime}}(rs)^{-\frac{1}{2}}(v_{i}\otimes v_{i+1}+r^{-1}v_{i+1}\otimes v_{i}), \\
\end{aligned}\right.
\end{flalign*}
where $1\le k\le n-1$, and for $k=n$, we have
\begin{flalign*}
e_n.\,(v_i\otimes v_i)=\left\{
\begin{aligned}
&\delta_{i,n+2}(v_n\otimes v_{n+2}+sv_{n+2}\otimes v_n),\\
&-\delta_{i,n+1}(rs)^{-\frac{1}{2}}(v_{n-1}\otimes v_{n+1}+r^{-1}v_{n+1}\otimes v_{n-1}), \\
\end{aligned}\right.
\end{flalign*}
\begin{flalign*}
f_n.\,(v_i\otimes v_i)=\left\{
\begin{aligned}
&\delta_{i,n}(v_n\otimes v_{n+2}+sv_{n+2}\otimes v_n),\\
&-\delta_{i,n-1}(rs)^{-\frac{1}{2}}(v_{n-1}\otimes v_{n+1}+r^{-1}v_{n+1}\otimes v_{n-1}). \\
\end{aligned}\right.
\end{flalign*}
For case (ii), we have
\begin{equation*}
\begin{split}
e_k.\, &(v_i\otimes v_{j}+sv_{j}\otimes v_i)\\
&=e_k(v_i)\otimes v_j+sw_k(v_j)\otimes e_k(v_i)+se_k(v_j)\otimes v_i+w_k(v_i)\otimes e_k(v_j),
\end{split}
\end{equation*}
so we get
\begin{flalign*}
e_k.\,(v_i\otimes v_{j}+sv_{j}\otimes v_i)=\left\{
\begin{aligned}
&\delta_{i+j,2n+2}\Bigl(v_i\otimes v_{i^{\prime}}+r^{-1}sv_{i^{\prime}}\otimes v_i\\
&\qquad-(r^{-1}s)^{\frac{1}{2}}v_{(i+1)^{\prime}}\otimes v_{i+1}
+v_{i+1}\otimes v_{(i+1)^{\prime}}\Bigr),\\
&\delta_{i,k+1}(v_{i-1}\otimes v_{j}+sv_{j}\otimes v_{i-1}),\\
&\delta_{j,k^{\prime}}(v_{i}\otimes v_{j-1}+sv_{j-1}\otimes v_{i}),\\
&\delta_{i,k}\delta_{k+1,j}(v_{i}\otimes v_{i}).
\end{aligned}\right.
\end{flalign*}
For operators $f_k$, we have
\begin{equation*}
	\begin{split}
f_k.\,(v_i\otimes v_{j}+sv_{j}\otimes v_i)&=v_i\otimes f_k(v_j)+sv_j\otimes f_k(v_i)\\
&\quad +f_k(v_i)\otimes w^{\prime}_k (v_j)+sf_k(v_j)\otimes w^{\prime}_k (v_i),
\end{split}
\end{equation*}
so we have
\begin{flalign*}
f_k.\,(v_i\otimes v_{j}+sv_{j}\otimes v_i)=\left\{
\begin{aligned}
&\delta_{i+j,2n}\Bigl(v_i\otimes v_{i^{\prime}}+r^{-1}sv_{i^{\prime}}\otimes v_i\\
&\qquad-(r^{-1}s)^{\frac{1}{2}}v_{(i+1)^{\prime}}\otimes v_{i+1}+v_{i+1}\otimes v_{(i+1)^{\prime}}\Bigr),\\
&\delta_{j,(k+1)^{\prime}}(v_i\otimes v_{j+1}+sv_{j+1}\otimes v_i),\\
&\delta_{i,k}(v_{i+1}\otimes v_{j}+sv_{j}\otimes v_{i+1}),\\
&\delta_{i,(k+1)^{\prime}}\delta_{j,k^{\prime}}(v_{i+1}\otimes v_{i+1}),\\
\end{aligned}\right.
\end{flalign*}
where $1\le k\le n-1$. For $k=n$, and $n^{\prime}+2\le l\le 2n$, we have
\begin{flalign*}
f_n.\,(v_i\otimes v_{j}+sv_{j}\otimes v_i)=\left\{
\begin{aligned}
&v_{n-1}\otimes v_{(n-1)^{\prime}}+r^{-1}sv_{(n-1)^{\prime}}\otimes v_{n-1}\\
&\qquad-(r^{-1}s)^{\frac{1}{2}}(v_{n}\otimes v_{n^{\prime}}+v_{n^{\prime}}\otimes v_{n}),\\
&\delta_{i,n}(v_{n+2}\otimes v_{l}+r^{-1}v_{l}\otimes v_{n+2}),\\
&\delta_{i,n-1}(v_{n+1}\otimes v_{l}+r^{-1}v_{l}\otimes v_{n+1}),\\
&v_l\otimes v_l.\\
\end{aligned}\right.
\end{flalign*}
For case (iii), we have
\begin{equation*}
	\begin{split}
e_k\cdot(v_i\otimes v_{j}+r^{-1}v_{j}\otimes v_i)&=e_k(v_i)\otimes v_j+r^{-1}w_k(v_j)\otimes e_k(v_i)\\
&\quad+r^{-1}e_k(v_j)\otimes v_i+w_k(v_i)\otimes e_k(v_j),
\end{split}
\end{equation*}
so we get
\begin{flalign*}
e_k.\,(v_i\otimes v_{j}+r^{-1}v_{j}\otimes v_i)=\left\{
\begin{aligned}
&\delta_{i+j,2n+2}\Bigl(v_i\otimes v_{i^{\prime}}+r^{-1}s\,v_{i^{\prime}}\otimes v_i\\
&\qquad-(r^{-1}s)^{\frac{1}{2}}(v_{(i+1)^{\prime}}\otimes v_{i+1}+v_{i+1}\otimes v_{(i+1)^{\prime}})\Bigr),\\
&\delta_{i,k+1}(v_{i-1}\otimes v_{j}+r^{-1}v_{j}\otimes v_{i-1}),\\
&\delta_{j,k^{\prime}}(v_{i}\otimes v_{j-1}+r^{-1}v_{j-1}\otimes v_{i}),\\
&\delta_{i,k}\delta_{k+1,j}(v_{i}\otimes v_{i}).\\
\end{aligned}\right.
\end{flalign*}
For operators $f_k$, we have
\begin{equation*}
	\begin{split}
f_k.\,(v_i\otimes v_{j}+r^{-1}v_{j}\otimes v_i)&=v_i\otimes f_k(v_j)+r^{-1}v_j\otimes f_k(v_i)\\
&\quad+f_k(v_i)\otimes w^{\prime}_k (v_j)+r^{-1}f_k(v_j)\otimes w^{\prime}_k (v_i),
\end{split}
\end{equation*}
so we have
\begin{flalign*}
f_k.\,(v_i\otimes v_{j}+r^{-1}v_{j}\otimes v_i)=\left\{
\begin{aligned}
&\delta_{i+j,2n}\Bigl(v_i\otimes v_{i^{\prime}}+r^{-1}sv_{i^{\prime}}\otimes v_i\\
&\qquad-(r^{-1}s)^{\frac{1}{2}}(v_{(i+1)^{\prime}}\otimes v_{i+1}+v_{i+1}\otimes v_{(i+1)^{\prime}})\Bigr),\\
&\delta_{j,(k+1)^{\prime}}(v_i\otimes v_{j+1}+r^{-1}v_{j+1}\otimes v_i),\\
&\delta_{i,k}(v_{i+1}\otimes v_{j}+r^{-1}v_{j}\otimes v_{i+1}),\\
&\delta_{i,(k+1)^{\prime}}\delta_{j,k^{\prime}}(v_{i+1}\otimes v_{i+1}),\\
\end{aligned}\right.
\end{flalign*}
where $1\le k\le n-1$. For $k=n$, and $n^{\prime}+2\le l\le 2n$, we have
\begin{flalign*}
f_n.\,(v_i\otimes v_{j}+r^{-1}v_{j}\otimes v_i)=\left\{
\begin{aligned}
&v_{n-1}\otimes v_{(n-1)^{\prime}}+r^{-1}sv_{(n-1)^{\prime}}\otimes v_{n-1}\\
&\qquad-(r^{-1}s)^{\frac{1}{2}}(v_{n}\otimes v_{n^{\prime}}+v_{n^{\prime}}\otimes v_{n}),\\
&\delta_{i,n}(v_{n+2}\otimes v_{l}+sv_{l}\otimes v_{n+2}),\\
&\delta_{i,n-1}(v_{n+1}\otimes v_{l}+sv_{l}\otimes v_{n+1}),\\
&v_l\otimes v_l.\\
\end{aligned}\right.
\end{flalign*}
Therefore, it is easy to see that $S^{\prime}(V\otimes V)$ is simple.
\end{proof}
\begin{lemma}
The simple module $\Lambda(V\otimes V)$ is defined as follows:\\
$\text{\rm (i)}$ \,\  $v_i\otimes v_j-rv_j\otimes v_i$, $1\le i\le n$ and $i+1\le j\le n$ or $i^{\prime}+1\le j\le 2n$,\\
$\text{\rm (ii)}$ \ $v_i\otimes v_j-s^{-1}v_j\otimes v_i$, $1\le i\le n-1$, $n+1\le j\le 2n-i$, or $n+1\le i\le 2n-1$, $i+1\le j\le 2n$\\
$\text{\rm (iii)}$  $-(rs)^{-\frac{1}{2}}v_i\otimes v_{i^{\prime}}-s^{-1}v_{(i+1)^{\prime}}\otimes v_{i+1}+r^{-1}v_{i+1}\otimes v_{(i+1)^{\prime}}+(rs)^{-\frac{1}{2}}v_{i^{\prime}}\otimes v_i$, $1\le i\le n-1$,\\
where $v_1\otimes v_2-rv_2\otimes v_1$ is the highest weight vector.
\end{lemma}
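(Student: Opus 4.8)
The plan is to follow the scheme of the two preceding lemmas. Write $W\subseteq V\otimes V$ for the subspace spanned by the vectors listed in (i)--(iii); I must show that $W$ is a $U$-submodule, that it is simple, and that $v_1\otimes v_2-rv_2\otimes v_1$ is its highest weight vector, of weight $\epsilon_1+\epsilon_2$.

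First I would check that $W$ is $U$-stable. Each listed vector is a weight vector: the $w_k^{\pm1},w^{\prime\pm1}_k$ are group-like, so $w_k\otimes w_k$ scales $v_i\otimes v_j-c\,v_j\otimes v_i$ by the single eigenvalue read off from the diagonal matrices in $(B2)$--$(B3)$, and the four terms of each vector in (iii) all have weight $0$; hence only the actions of $e_k$ and $f_k$ $(1\le k\le n)$ need be verified. Using $\Delta(e_k)=e_k\otimes1+w_k\otimes e_k$, $\Delta(f_k)=1\otimes f_k+f_k\otimes w^{\prime}_k$ together with the matrices $(B1)$--$(B3)$, I would run through the same case distinctions as in the proof of the preceding lemma for $S'(V\otimes V)$: applied to a type-(i) vector, $e_k$ returns $0$, or a type-(i) vector with one index shifted down, or --- in the boundary case where the two indices satisfy $i+j=2n+2$ --- a type-(iii) vector; applied to a type-(ii) vector it returns $0$, a type-(ii) vector, or a type-(iii) vector; and applied to a type-(iii) vector it returns a combination of type-(i) and type-(ii) vectors; the $f_k$ are handled symmetrically (boundary case $i+j=2n$), and the exceptional node $k=n$ by the same computation under $\epsilon_n\mapsto-\epsilon_n$, using $(B3)$. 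The hard part here is purely the bookkeeping of the two parameters: one has to verify that the coefficients $-r$, $-s^{-1}$, $-(rs)^{-\frac12}$ appearing in (i)--(iii) are exactly those making the unwanted ``cross'' contributions of the $w_k\otimes e_k$ and $f_k\otimes w^{\prime}_k$ summands of the coproduct cancel. This is where essentially all of the work lies, though it is routine.

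Finally I would deduce simplicity. One checks from the previous step that $x_0:=v_1\otimes v_2-rv_2\otimes v_1$ is a highest weight vector of weight $\epsilon_1+\epsilon_2$: for $k\ge2$ one has $e_k.x_0=0$ trivially because $(\epsilon_1+\epsilon_2)+\alpha_k$ is not a weight of $V\otimes V$, while $e_1.x_0$ has weight $2\epsilon_1$ and its vanishing is precisely the cancellation $r\,v_1\otimes v_1-r\,(v_1\otimes v_1)=0$ forced by the coefficient $-r$. In $W$ this is, up to scalar, the unique weight vector of the maximal weight $\epsilon_1+\epsilon_2$. Running the lowering operators $f_k$ downward from $x_0$ and tracking weights then shows that $Ux_0$ already contains every vector in (i)--(iii), so $W=Ux_0$ is a cyclic highest weight module whose top weight space is one-dimensional, hence indecomposable; since finite-dimensional $U$-modules are completely reducible, $W$ is simple, i.e.\ $W\cong L(\epsilon_1+\epsilon_2)$. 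Together with the two preceding lemmas this also yields the decomposition $V\otimes V=S^{o}(V\otimes V)\oplus S'(V\otimes V)\oplus\Lambda(V\otimes V)$ into simple submodules announced at the start of the section.
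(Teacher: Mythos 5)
Your overall strategy coincides with the paper's (whose proof is literally ``repeat the calculations of Lemma 3.4''), and your treatment of the highest weight vector and of simplicity via cyclicity plus complete reducibility is sound. However, the step you defer as routine bookkeeping --- closure of the listed span under the $f_k$, with ``the exceptional node $k=n$ by the same computation under $\epsilon_n\mapsto-\epsilon_n$'' --- is exactly where the argument fails for the list as stated. Apply $f_n$ to the type-(i) vector $v_{n-1}\otimes v_n-rv_n\otimes v_{n-1}$ of weight $\alpha_n=\epsilon_{n-1}+\epsilon_n$: using $\Delta(f_n)=1\otimes f_n+f_n\otimes w'_n$ together with $(B1)$ and $(B3)$ one obtains the weight-zero vector
\[
v_{n-1}\otimes v_{(n-1)'}-r^{-\frac12}s^{\frac12}\,v_{n'}\otimes v_n+r^{\frac12}s^{-\frac12}\,v_n\otimes v_{n'}-v_{(n-1)'}\otimes v_{n-1}.
\]
The only listed vectors supported on these four basis elements are the type-(iii) vectors with $i=n-1$ and $i=n-2$; forcing the coefficient of $v_{n-2}\otimes v_{(n-2)'}$ to vanish kills the $i=n-2$ term, and then matching the $v_{n-1}\otimes v_{(n-1)'}$ entry of the $i=n-1$ vector forces the coefficient of $v_{n'}\otimes v_n$ to be $r^{\frac12}s^{-\frac12}$ rather than $-r^{-\frac12}s^{\frac12}$. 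So this $f_n$-image lies outside the span of (i)--(iii), and that span is not $U$-stable.

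The underlying issue is a count: the simple module of highest weight $\epsilon_1+\epsilon_2$ has dimension $n(2n-1)$ with an $n$-dimensional zero weight space, whereas items (i)--(iii) supply $2n(n-1)$ nonzero-weight vectors and only $n-1$ zero-weight vectors, i.e.\ $2n^2-n-1$ vectors in all (and the three lemmas together then account for only $4n^2-1$ of the $4n^2$ dimensions of $V\otimes V$). One zero-weight vector --- the ``$h_{\alpha_n}$'' direction, precisely the $f_n$-image displayed above --- is missing from item (iii). To repair the proof you must either adjoin this vector to the list or define $\Lambda(V\otimes V)$ as the submodule generated by $v_1\otimes v_2-rv_2\otimes v_1$; as written, the closure claim in your second paragraph is false and the identification of $W$ with $Ux_0$ in your third paragraph does not hold. (The paper's one-line proof shares this defect, so this is a correction to the statement as much as to your argument.)
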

\begin{proof}
We can check this lemma by repeating the similar calculations did for Lemma 3.4.
\end{proof}
\begin{lemma}
The decomposition of $U_{r,s}(\mathfrak{so}_{2n})$-module $V\otimes V$ is
$$V\otimes V=S^{o}(V\otimes V)\oplus S^{\prime}(V\otimes V)\oplus\Lambda(V\otimes V).$$
\end{lemma}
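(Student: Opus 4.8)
The plan is to derive the asserted decomposition from the three preceding lemmas by a single linear-independence count. Those lemmas already tell us that $S^{o}(V\otimes V)$, $S^{\prime}(V\otimes V)$ and $\Lambda(V\otimes V)$ are (simple) submodules of $V\otimes V$, so all that remains is to show that the union of the vectors used to define them --- the generator $\sum_{i}a_{i}v_{i^{\prime}}\otimes v_{i}$ of $S^{o}(V\otimes V)$, the vectors in items (i)--(iv) defining $S^{\prime}(V\otimes V)$, and the vectors in items (i)--(iii) defining $\Lambda(V\otimes V)$ --- together form a basis of $V\otimes V$. Once this is known, the fact that each of the three sublists spans a submodule makes $V\otimes V$ automatically the \emph{direct} sum of those submodules, so no separate verification of directness is needed.

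First I would check the numerology: summing over the stated index ranges, the $S^{o}$-, $S^{\prime}$- and $\Lambda$-lists contribute $1$, $2n+2n(n-1)+(n-1)=2n^{2}+n-1$, and $2n^{2}-n$ vectors respectively, for a total of $1+(2n^{2}+n-1)+(2n^{2}-n)=4n^{2}=\dim_{\mathbb{K}}(V\otimes V)$. It then remains only to prove that these $4n^{2}$ vectors are linearly independent. The key simplification is that every listed vector is a weight vector for the commuting family $w_{1},\dots,w_{n},w_{1}^{\prime},\dots,w_{n}^{\prime}$: since the $w_{i},w_{i}^{\prime}$ are group-like, they act on $V\otimes V$ diagonally in the basis $\{v_{a}\otimes v_{b}\}$ via $(B2)$--$(B3)$, so linear independence can be checked weight space by weight space.

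On a nonzero weight space, the standard vectors $v_{a}\otimes v_{b}$ occurring there with $b\ne a^{\prime}$ split into pairs $\{a,b\}$, and on each two-dimensional block $\langle v_{a}\otimes v_{b},\,v_{b}\otimes v_{a}\rangle$ exactly one vector from the $S^{\prime}$-list and one from the $\Lambda$-list appear; these are independent because the corresponding $2\times 2$ transition determinants, $-(r+s)$ and $-(r^{-1}+s^{-1})$, are nonzero (as $r,s$ are algebraically independent). The diagonal vectors $v_{a}\otimes v_{a}$ occur only in item (i) of the $S^{\prime}$-list and give an identity block. The one genuinely substantial case is the weight-zero space, spanned by $\{v_{i}\otimes v_{i^{\prime}}\mid 1\le i\le 2n\}$ and of dimension $2n$: it is the only place where all three summands meet, containing the generator of $S^{o}(V\otimes V)$ together with the vectors of item (iv) of the $S^{\prime}$-list and item (iii) of the $\Lambda$-list. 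Here one must write down the $2n\times 2n$ matrix expressing these vectors in the basis $\{v_{i}\otimes v_{i^{\prime}}\}$ and check that its determinant is a nonzero element of $\mathbb{Q}(r,s)$.

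I expect this weight-zero block to be the main obstacle --- everything else reduces to trivial or $2\times 2$ blocks, whereas here one must track all the mixed terms coming from the symmetric and skew parts of the anti-diagonal. An alternative that avoids the explicit determinant: the three summands are simple modules with pairwise distinct highest weights --- weight $0$ for $S^{o}(V\otimes V)$, weight $2\epsilon_{1}$ for $S^{\prime}(V\otimes V)$ (highest weight vector $v_{1}\otimes v_{1}$), and weight $\epsilon_{1}+\epsilon_{2}$ for $\Lambda(V\otimes V)$ (highest weight vector $v_{1}\otimes v_{2}-rv_{2}\otimes v_{1}$) --- hence pairwise non-isomorphic, so their sum inside $V\otimes V$ is automatically direct; the dimension count of the second paragraph then forces $S^{o}(V\otimes V)\oplus S^{\prime}(V\otimes V)\oplus\Lambda(V\otimes V)=V\otimes V$. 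Either route completes the proof.
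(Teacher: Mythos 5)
Your approach is genuinely different from the paper's: the paper computes nothing here, it simply invokes the monoidal equivalence of the categories $\mathcal{O}^{r,s}$ and $\mathcal{O}^{q}$ from \cite{HP} and quotes the known decomposition of $V\otimes V$ for $U_{q}(\mathfrak{so}_{2n})$ from \cite{KS}. A direct basis/highest-weight argument such as yours would be a real addition, but as written it has a concrete gap in the numerology. The list defining $\Lambda(V\otimes V)$ in Lemma 3.5 contains $n(n-1)$ vectors in item (i), $n(n-1)$ in item (ii) and $n-1$ in item (iii), i.e.\ $2n^{2}-n-1$ vectors, not the $2n^{2}-n$ you assert; the three lists together therefore supply only $4n^{2}-1$ vectors. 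The shortfall sits exactly in the weight-zero block that you single out as the hard case: that block is $2n$-dimensional, spanned by the $v_{a}\otimes v_{a^{\prime}}$, while the lists contribute only $1+(n-1)+(n-1)=2n-1$ vectors to it, so the ``$2n\times 2n$ matrix'' you propose to write down is in fact a $(2n-1)\times 2n$ matrix and cannot exhibit a basis. The underlying reason is that $\Lambda(V\otimes V)\cong V(\epsilon_{1}+\epsilon_{2})$ has an $n$-dimensional zero-weight space (one vector per node of the Dynkin diagram), whereas item (iii) of Lemma 3.5 lists only the $n-1$ vectors attached to $\alpha_{1},\dots,\alpha_{n-1}$ and omits the one attached to $\alpha_{n}$. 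Your count is the correct one for the \emph{module} but not for the \emph{list}, so route (a) cannot be closed using the listed vectors alone.

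Your alternative route (b) is sound where it matters: the three submodules are simple with pairwise distinct highest weights $0$, $2\epsilon_{1}$ and $\epsilon_{1}+\epsilon_{2}$, hence pairwise non-isomorphic, hence their sum is automatically direct; that part needs no computation and is cleaner than anything in the paper. But the final step ``the dimension count then forces equality'' still requires $\dim S^{\prime}(V\otimes V)=2n^{2}+n-1$ and $\dim\Lambda(V\otimes V)=2n^{2}-n$, and by the above these numbers cannot be read off by counting the listed vectors. You must either complete the list (supply the missing zero-weight vector) and actually prove linear independence, which returns you to route (a) including the $2n\times 2n$ determinant you deferred, or quote that the simple $U_{r,s}$-modules of highest weights $2\epsilon_{1}$ and $\epsilon_{1}+\epsilon_{2}$ have their classical dimensions --- and in the two-parameter setting that is precisely the appeal to the equivalence with the one-parameter category which constitutes the paper's entire proof. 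Either fix works, but one of them has to be made explicit before the argument is complete.
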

\begin{proof}
In \cite{HP}, Hu and Pei have proved that as braided tensor categories, the categories $\mathcal{O}^{r,s}$ of finite-dimensional weight
$U_{r,s}(\mathfrak{g})$-modules (of type $1$) and $\mathcal{O}^{q}$ are monoidally equivalent. Referring to the book by Klimik and Schmuedgen \cite{KS}, $U_{q}(\mathfrak{so}_{2n})$-module $V\otimes V$ is completely reducible and can be decomposed into the direct sum of three simple modules.
\end{proof}
\begin{proposition}
The minimum polynomial of $R=R_{V,V}$ on $V\otimes V$ is $$(t-r^{-\frac{1}{2}}s^{\frac{1}{2}})\cdot(t+r^{\frac{1}{2}}s^{-\frac{1}{2}})\cdot(t-r^{\frac{2n-1}{2}}s^{-\frac{2n-1}{2}}).$$
\end{proposition}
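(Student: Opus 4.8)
The plan is to combine the module decomposition of Lemma~3.6 with Schur's lemma, reduce the minimal polynomial of $R$ to three scalars, and then compute those scalars by a highest weight evaluation and a determinant count.

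First I would observe that $R=R_{V,V}=\Theta\circ\widetilde f\circ P$ (Theorem~2.6 with $M=M'=V$) is an endomorphism of the $U_{r,s}(\mathfrak{so}_{2n})$-module $V\otimes V$, and that in $V\otimes V=S^{o}(V\otimes V)\oplus S'(V\otimes V)\oplus\Lambda(V\otimes V)$ the three summands are pairwise non-isomorphic simple modules: by the computations in the proof of Lemma~3.3 the module $S^{o}(V\otimes V)$ is the one-dimensional trivial module, while $S'(V\otimes V)$ and $\Lambda(V\otimes V)$ have distinct highest weights $2\epsilon_1$ (from $v_1\otimes v_1$) and $\epsilon_1+\epsilon_2$ (from $v_1\otimes v_2-rv_2\otimes v_1$). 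Hence, by Schur's lemma, $R$ acts as a scalar $c_{o}$, $c'$, $c_{\Lambda}$ on the three summands; in particular $R$ is semisimple and its minimal polynomial is $(t-c_{o})(t-c')(t-c_{\Lambda})$ once the three scalars are checked to be pairwise distinct.

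To compute $c'$, I would evaluate $R$ on the highest weight vector $v_1\otimes v_1$ of $S'(V\otimes V)$. Since $v_1$ is the highest weight vector of $V$, every $\mathfrak n^{+}$-component $u^{\zeta}_k$ with $0\neq\zeta\in Q^{+}$ annihilates $v_1$, so only $\Theta_0=1\otimes1$ contributes to $\Theta$ and $R(v_1\otimes v_1)=f(\epsilon_1,\epsilon_1)\,v_1\otimes v_1$; thus $c'=f(\epsilon_1,\epsilon_1)=\langle w'_{\epsilon_1},w_{\epsilon_1}\rangle^{-1}$, which, reading off the action of $w_1,\dots,w_n$ on $v_1$ from $(B2)$–$(B3)$ together with $\epsilon_1=\alpha_1+\cdots+\alpha_{n-2}+\tfrac12(\alpha_{n-1}+\alpha_n)$, equals $r^{-1/2}s^{1/2}$. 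For $c_{\Lambda}$ I would restrict $R$ to the two-dimensional weight space $W=\mathbb K(v_1\otimes v_2)\oplus\mathbb K(v_2\otimes v_1)$, which by Lemmas~3.4 and 3.5 splits as the line $\mathbb K(v_1\otimes v_2+sv_2\otimes v_1)\subset S'(V\otimes V)$ (eigenvalue $c'$) and the line $\mathbb K(v_1\otimes v_2-rv_2\otimes v_1)\subset\Lambda(V\otimes V)$ (eigenvalue $c_{\Lambda}$). On $W$ the operator $P$ merely transposes the two basis vectors, $\widetilde f$ rescales by $f(\epsilon_1,\epsilon_2)$ and $f(\epsilon_2,\epsilon_1)$, and $\Theta$ is unitriangular in the weight order, so $\det(R|_{W})=-f(\epsilon_1,\epsilon_2)f(\epsilon_2,\epsilon_1)$; a short computation with $(2.2)$ gives $f(\epsilon_1,\epsilon_2)f(\epsilon_2,\epsilon_1)=1$, hence $c_{\Lambda}=-1/c'=-r^{1/2}s^{-1/2}$.

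Finally, to fix $c_{o}$ I would take determinants. By Lemma~3.6, $S^{o}(V\otimes V)$ is the $1$-dimensional $(r,s)$-invariant line, $S'(V\otimes V)$ is the complement of it inside the quantum symmetric square, of dimension $\binom{2n+1}{2}-1=2n^{2}+n-1$, and $\Lambda(V\otimes V)$ is the quantum antisymmetric square, of dimension $\binom{2n}{2}=2n^{2}-n$. Writing $\det R=\det\Theta\cdot\det\widetilde f\cdot\det P$, we have $\det\Theta=1$ because $\Theta-1\otimes1$ is strictly lower triangular for an ordering of the standard basis of $V\otimes V$ refining the weight of the first tensor factor; $\det\widetilde f=\prod_{a,b}f(\mathrm{wt}\,v_a,\mathrm{wt}\,v_b)=\prod_a f\bigl(\mathrm{wt}\,v_a,\sum_b\mathrm{wt}\,v_b\bigr)=1$ since the weights of $V$ sum to $0$ and $f$ is bimultiplicative; and $\det P=(-1)^{\binom{2n}{2}}=(-1)^{n}$. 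Comparing with $\det R=c_{o}\,(c')^{2n^{2}+n-1}\,c_{\Lambda}^{2n^{2}-n}$ and substituting $c'=r^{-1/2}s^{1/2}$ and $c_{\Lambda}=-r^{1/2}s^{-1/2}$, all powers of $r,s$ and signs collapse and force $c_{o}=r^{(2n-1)/2}s^{-(2n-1)/2}$. Since $r,s$ are algebraically independent, the scalars $r^{-1/2}s^{1/2}$, $-r^{1/2}s^{-1/2}$, $r^{(2n-1)/2}s^{-(2n-1)/2}$ are pairwise distinct, so the minimal polynomial of $R$ is exactly $(t-r^{-1/2}s^{1/2})(t+r^{1/2}s^{-1/2})(t-r^{(2n-1)/2}s^{-(2n-1)/2})$. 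The step that needs genuine care is bookkeeping the half-integer powers of $r$ and $s$ in the pairing values $f(\epsilon_i,\epsilon_j)$ (which come precisely from $\epsilon_1\notin Q$) and verifying that exactly $\Theta_0$, respectively $\Theta_0$ and $\Theta_{\alpha_1}$, act nontrivially in the two highest weight evaluations; the rest of the argument is formal.
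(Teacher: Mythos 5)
Your argument is correct, and for the decisive third eigenvalue it follows a genuinely different route from the paper's. The paper obtains $r^{-\frac12}s^{\frac12}$ on $S'(V\otimes V)$ and $-r^{\frac12}s^{-\frac12}$ on $\Lambda(V\otimes V)$ exactly as you do for $S'$, namely by evaluating $R$ on the highest weight vectors $v_1\otimes v_1$ and $v_1\otimes v_2-rv_2\otimes v_1$; for the invariant line it applies the explicit action of $R$ to the vector $\sum_i a_i v_{i^{\prime}}\otimes v_i$ of Lemma 3.3 and compares the coefficient of $v_1\otimes v_{1^{\prime}}$ on both sides (only the summand $a_1v_{1^{\prime}}\otimes v_1$ contributes, with factor $r^{\frac12}s^{-\frac12}$, and the ratio $a_1/a_{1^{\prime}}=(rs^{-1})^{n-1}$ supplies the remaining exponent). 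You instead get $c_{\Lambda}$ from a $2\times2$ determinant on the weight space of $\epsilon_1+\epsilon_2$, and you extract $c_{o}$ from the global identity $\det R=\det\Theta\cdot\det\widetilde f\cdot\det P=(-1)^{n}$ combined with the dimensions $1$, $2n^{2}+n-1$, $2n^{2}-n$ of the three summands. Your route has the advantage of using only the abstract factorization $R=\Theta\circ\widetilde f\circ P$ (triangularity of $\Theta$, bimultiplicativity of $f$, the sign of the flip) and never the explicit $R$-matrix or the coefficients $a_i$; its price is that the dimensions of the constituents must be known --- your value $\dim\Lambda(V\otimes V)=\binom{2n}{2}=2n^{2}-n$ is the correct one, even though the explicit list in Lemma 3.5 as printed is one vector short of it. The arithmetic checks out: $(c')^{2n^{2}+n-1}c_{\Lambda}^{2n^{2}-n}=(-1)^{n}(r^{-\frac12}s^{\frac12})^{2n-1}$ forces $c_{o}=(rs^{-1})^{\frac{2n-1}{2}}$, and since $r,s$ are algebraically independent the three scalars are pairwise distinct, so both proofs land on the same minimal polynomial.
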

\begin{proof}
It follows from the definition of $R$ that $R(v_1\otimes v_1)=r^{-\frac{1}{2}}s^{\frac{1}{2}}v_1\otimes v_1$ and $R(v_1\otimes v_2-rv_2\otimes v_1)=-r^{\frac{1}{2}}s^{-\frac{1}{2}}(v_1\otimes v_2-rv_2\otimes v_1)$. By the proceeding lemmas, $S^{\prime}(V\otimes V)$ and $\Lambda(V\otimes V)$
are simple, and in fact, $v_1\otimes v_1$ and $v_1\otimes v_2-rv_2\otimes v_1$ are the highest weight vectors. In particular, each is a cyclic module generated by its highest weight vector, respectively, $R(a_1 v_{1^{\prime}}\otimes v_1) = a_1r^{\frac{1}{2}}s^{-\frac{1}{2}}v_1\otimes v_{1^{\prime}}+\cdots$, and $v_1\otimes v_{1^{\prime}}$ only occurs in $R(a_1 v_{1^{\prime}}\otimes v_1)$. So we have the desired result $$R(\sum\limits_{i=1}^{2n}a_{i}v_{i^{\prime}}\otimes v_i)=r^{\frac{2n-1}{2}}s^{-\frac{2n-1}{2}}(\sum\limits_{i=1}^{2n}a_{i}v_{i^{\prime}}\otimes v_i).$$
\end{proof}
\begin{theorem}
The braiding $R$-matrix $R=R_{V,V}$ acts as
\begin{flalign*}
R=&\,r^{-\frac{1}{2}}s^{\frac{1}{2}}
\sum\limits_{i=1}^{2n}E_{ii}{\otimes} E_{ii}{+}r^{\frac{1}{2}}s^{-\frac{1}{2}}\sum\limits_{i=1}^{2n}E_{ii^{\prime}}{\otimes} E_{i^{\prime}i}{+}r^{-\frac{1}{2}}s^{-\frac{1}{2}}
\Bigl\{\sum\limits_{1\le i\le n-1\atop i+1\le j\le n}E_{ij}\otimes E_{ji}\\
&+\sum\limits_{1\le i\le n-1\atop i^{\prime}+1\le j\le 2n}E_{ij}\otimes E_{ji}+\sum\limits_{j=n+2}^{2n}E_{nj}\otimes E_{jn}+\sum\limits_{n+1\le i\le 2n-1\atop i+1\le j\le 2n}E_{ji}\otimes E_{ij}\\
&+\sum\limits_{1\le i\le n-1\atop n+1\le j\le 2n-i}E_{ji}\otimes E_{ij}\Bigr\}+r^{\frac{1}{2}}s^{\frac{1}{2}}\Bigl\{\sum\limits_{1\le i\le n-1\atop i+1\le j\le n}E_{ji}\otimes E_{ij}+\sum\limits_{1\le i\le n-1\atop i^{\prime}+1\le j\le 2n}E_{ji}\otimes E_{ij}\\
&+\sum\limits_{j=n+2}^{2n}E_{jn}\otimes E_{nj}+\sum\limits_{n+1\le i\le 2n-1\atop i+1\le j\le 2n}E_{ij}\otimes E_{ji}+\sum\limits_{1\le i\le n-1\atop n+1\le j\le 2n-i}E_{ij}\otimes E_{ji}\Bigr\}\\
&+(r^{-\frac{1}{2}}s^{\frac{1}{2}}-r^{\frac{1}{2}}s^{-\frac{1}{2}})\Bigl\{\sum\limits_{i<j}E_{jj}\otimes E_{ii}-\sum\limits_{i>j}(r^{-\frac{1}{2}}s^{\frac{1}{2}})^{(\rho_i-\rho_j)}E_{ij^{\prime}}\otimes E_{i^{\prime}j}\Bigr\},
\end{flalign*}
where
$\rho_i=\left\{
\begin{aligned}
&n-i, 1\le i\le n\\
&0, n+1\\
&n-i+1, n+2\le i\le 2n\\
\end{aligned}\right.$.
\begin{proof}
We need to check that the braiding $R$-matrix $R$ acts on $S^{o}(V\otimes V)$ , $S^{\prime}(V\otimes V)$ as multiplication by $r^{\frac{2n-1}{2}}s^{-\frac{2n-1}{2}}$, $r^{-\frac{1}{2}}s^{\frac{1}{2}}$, and on $\Lambda(V\otimes V)$ as multiplication by $-r^{\frac{1}{2}}s^{-\frac{1}{2}}$. By straightforward calculations, one checks that the expression formula of the basic $R$-matrix is correct.
\end{proof}
\end{theorem}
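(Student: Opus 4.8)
The plan is to exploit the complete reducibility of Lemma 3.6 together with the spectral information of Proposition 3.7. Since $V\otimes V=S^{o}(V\otimes V)\oplus S^{\prime}(V\otimes V)\oplus\Lambda(V\otimes V)$ is a direct sum of pairwise non-isomorphic simple $U$-modules and $R=R_{V,V}$ is a $U$-module endomorphism, $R$ acts as a scalar on each summand; evaluating on the highest weight vectors as in the proof of Proposition 3.7, these scalars are $r^{-\frac{1}{2}}s^{\frac{1}{2}}$ on $S^{\prime}$, $-r^{\frac{1}{2}}s^{-\frac{1}{2}}$ on $\Lambda$, and $r^{\frac{2n-1}{2}}s^{-\frac{2n-1}{2}}$ on $S^{o}$. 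As these three scalars are distinct elements of $\mathbb{Q}(r,s)$, $R$ is the unique $\mathbb{K}$-linear operator on $V\otimes V$ whose restriction to each of the three submodules is the prescribed scalar. Hence it suffices to verify that the operator displayed in the statement enjoys exactly this property on the explicit bases of $S^{o}, S^{\prime}, \Lambda$ recorded in Lemmas 3.3, 3.4, 3.5; once this is checked it must coincide with $R_{V,V}$, and in particular no separate verification that the displayed operator is a module homomorphism is needed. (One could instead derive the formula from $R_{V,V}=\Theta\circ\widetilde{f}\circ P$ of Theorem 2.6 by computing $\widetilde{f}\circ P$ on the standard basis and the finitely many components $\Theta_\zeta$ acting nontrivially on $V\otimes V$, but the route just described is shorter.)

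I would carry out the verification by sorting the index pair $(i,j)$ according to its position relative to the involution $i\mapsto i^{\prime}=2n+1-i$. For the ``generic'' vectors --- $v_i\otimes v_i$, the symmetric-type vectors $v_i\otimes v_j+s\,v_j\otimes v_i$ and $v_i\otimes v_j+r^{-1}v_j\otimes v_i$ of Lemma 3.4(ii),(iii), and the antisymmetric-type vectors $v_i\otimes v_j-r\,v_j\otimes v_i$ and $v_i\otimes v_j-s^{-1}v_j\otimes v_i$ of Lemma 3.5(i),(ii), where $i\neq j$ and $j\neq i^{\prime}$ --- only a handful of the summands $E_{ab}\otimes E_{cd}$ in the displayed formula act nontrivially: the permutation term $E_{ij}\otimes E_{ji}$ or $E_{ji}\otimes E_{ij}$, the diagonal term $E_{ii}\otimes E_{ii}$, and one further diagonal contribution with coefficient $r^{-\frac{1}{2}}s^{\frac{1}{2}}-r^{\frac{1}{2}}s^{-\frac{1}{2}}$ from $\sum_{i<j}E_{jj}\otimes E_{ii}$ when the first index exceeds the second. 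Reading off the induced $2\times 2$ block on $\mathbb{K}\,v_i\otimes v_j\oplus\mathbb{K}\,v_j\otimes v_i$ and evaluating on the above combinations, one checks the symmetric ones are scaled by $r^{-\frac{1}{2}}s^{\frac{1}{2}}$ and the antisymmetric ones by $-r^{\frac{1}{2}}s^{-\frac{1}{2}}$, while each $v_i\otimes v_i$ is at once an eigenvector with eigenvalue $r^{-\frac{1}{2}}s^{\frac{1}{2}}$.

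The substantive part is the ``diagonal block'' $W=\bigoplus_{i=1}^{2n}\mathbb{K}\,v_i\otimes v_{i^{\prime}}$, which is stabilized by the displayed operator and has for a basis the $S^{o}$-generator $\sum_{i=1}^{2n}a_i v_{i^{\prime}}\otimes v_i$ of Lemma 3.3, the $n-1$ vectors of Lemma 3.4(iv) lying in $S^{\prime}$, and the $n-1$ vectors of Lemma 3.5(iii) lying in $\Lambda$. On $W$ the relevant summands are $r^{\frac{1}{2}}s^{-\frac{1}{2}}E_{ii^{\prime}}\otimes E_{i^{\prime}i}$, $r^{-\frac{1}{2}}s^{\frac{1}{2}}E_{ii}\otimes E_{ii}$, the diagonal correction with coefficient $r^{-\frac{1}{2}}s^{\frac{1}{2}}-r^{\frac{1}{2}}s^{-\frac{1}{2}}$, and, crucially, the off-diagonal term $-(r^{-\frac{1}{2}}s^{\frac{1}{2}}-r^{\frac{1}{2}}s^{-\frac{1}{2}})(r^{-\frac{1}{2}}s^{\frac{1}{2}})^{\rho_i-\rho_j}E_{ij^{\prime}}\otimes E_{i^{\prime}j}$, which feeds $v_j\otimes v_{j^{\prime}}$ into every $v_i\otimes v_{i^{\prime}}$ with $i>j$. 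I would check the eigenvalue relations for the three families in turn by downward induction on $i$ starting from $i=n$, using the recursions relating the Lemma 3.4(iv) (respectively Lemma 3.5(iii)) vector indexed by $i$ to the one indexed by $i+1$.

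The main obstacle is precisely this bookkeeping on $W$. One must observe that the coefficients satisfy $a_k=(rs^{-1})^{\rho_k/2}$, so that $(r^{-\frac{1}{2}}s^{\frac{1}{2}})^{\rho_i-\rho_j}=a_j/a_i$; with this identity in hand the contributions of the off-diagonal matrix units $E_{ij^{\prime}}\otimes E_{i^{\prime}j}$ telescope along $W$ so as to give exactly the scalar $r^{\frac{2n-1}{2}}s^{-\frac{2n-1}{2}}$ on $\sum_i a_i v_{i^{\prime}}\otimes v_i$ while leaving the $S^{\prime}$- and $\Lambda$-vectors scaled by $r^{-\frac{1}{2}}s^{\frac{1}{2}}$ and $-r^{\frac{1}{2}}s^{-\frac{1}{2}}$. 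This is also where the ``$n,n+1,n+2$'' boundary cases issuing from $T_1(e_n),T_1(f_n)$ in Lemma 3.2, and the exact index ranges ($1\le i\le n-1$, $n+1\le j\le 2n-i$, and so on), must be tracked with care; outside $W$ everything reduces to the short $2\times 2$ checks above and the remaining identities are elementary manipulations in $\mathbb{Q}(r,s)$.
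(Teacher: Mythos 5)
Your proposal is correct and follows the same route as the paper: the paper's own proof consists precisely of checking that the displayed operator acts by the three scalars $r^{\frac{2n-1}{2}}s^{-\frac{2n-1}{2}}$, $r^{-\frac{1}{2}}s^{\frac{1}{2}}$, $-r^{\frac{1}{2}}s^{-\frac{1}{2}}$ on $S^{o}$, $S^{\prime}$, $\Lambda$ respectively, which it leaves as a ``straightforward calculation.'' Your write-up merely makes explicit the bookkeeping (in particular the identity $a_k=(rs^{-1})^{\rho_k/2}$ underlying the off-diagonal block), which is consistent with Lemmas 3.3--3.6 and Proposition 3.7.
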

\begin{remark}
\rm Consider the matrix $\hat{R}=P\circ R$, where $P=\sum\limits_{i,j}E_{ij}\otimes E_{ji}$, and $R$ satisfying the braiding relations on the tensor power $V^{\otimes k}$:
\begin{align*}
R_i\circ R_{i+1}\circ R_i&=R_{i+1}\circ R_i\circ R_{i+1},\\
R_i\circ R_j&=R_j\circ R_i,
\end{align*}
where $1\le i< k$, $|i-j|\ge 2,$ $R_i={id_{V}}^{i-1}\otimes R\otimes{id_{V}}^{k-1}$.
\end{remark}

\section{FRT realization of $U_{r,s}(\mathfrak{so}_{2n})$}
In the section, we give an isomorphism between Faddeev-Reshetikhin-Takhtajan and Drinfeld-Jimbo definitions of $U_{r,s}\mathcal(\mathfrak{so}_{2n})$, and the spectral parameter dependent $R(z)$. Let $\mathcal{B}$ (respectively, $\mathcal{B}^{'}$) denote the subalgebra of  $U_{r,s}\mathcal(\mathfrak{so}_{2n})$ generated by $e_{i}$, $w^{\prime\pm1}_{i}$ (respectively, $f_{i}$, $w^{\pm1}_{i}$), $1\le i\le n$.
\begin{defi}
\rm $U(\hat R)$ is an associative algebra with unit. It has generators $l^+_{ij}$, $l^-_{ji}$, $1\le i\le 2n$. Let $L^{\pm}=(l^{\pm}_{ij}), 1\leq i, j\leq 2n+1$, with $l_{ij}^+=l_{ji}^-=0$, and $l_{ii}^-l_{ii}^+=l_{ii}^+l_{ii}^-$ for $1\leq j<i\leq 2n+1$. The defining relations are given in matrix form as follows:
\begin{align}
\hat{R}L^{\pm}_1L^{\pm}_2=L^{\pm}_2L^{\pm}_1\hat{R},\quad \hat{R}L^{+}_1L^{-}_2=L^{-}_2L^{+}_1\hat{R},
\end{align}
where $L^{\pm}_1=L^{\pm}\otimes 1$, $L^{\pm}_2=1\otimes L^{\pm}$.
\end{defi}
Since $L^{\pm}$ are upper and lower triangular, respectively, and the diagonal elements of these matrix are invertible, $L^{\pm}$ have inverse $(L^{\pm})^{-1}$ as matrices with elements in $U(\hat R)$. The relations between $L^{\pm}_1$ and $L^{\pm}_2$ immediately imply the following Theorem.

\begin{theorem}
The mapping $\phi_n$ between $U(\hat R)$ and $U_{r,s}(\mathfrak{so}_{2n})$ is an algebraic homomorphism.
\end{theorem}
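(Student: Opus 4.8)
The plan is to construct the homomorphism $\phi_n$ explicitly by matching the Gaussian-type entries of $L^{\pm}$ with the Drinfeld--Jimbo generators, following the classical recipe of Faddeev--Reshetikhin--Takhtajan as adapted to the two-parameter setting. First I would use the Gauss decomposition $L^{\pm} = N^{\mp} D^{\pm} N^{\pm}$ where $D^{\pm}$ is diagonal, $N^{+}$ (resp. $N^{-}$) strictly upper (resp. lower) triangular with $1$'s on the diagonal, so that the diagonal entries $d^{\pm}_i$ of $D^{\pm}$ are candidates for (products of) the $w_i, w'_i$, and the off-diagonal entries adjacent to the diagonal produce the $e_i$ and $f_i$. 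Concretely, I would set $\phi_n(d^{+}_i)$ and $\phi_n(d^{-}_i)$ to be the appropriate monomials in $T_1(w_j), T_1(w'_j)$ dictated by the weights in $(B2)$--$(B3)$, and $\phi_n(l^{+}_{i,i+1})$, $\phi_n(l^{-}_{i+1,i})$ to be $(r-s)$-rescaled copies of $T_1(e_i)$, $T_1(f_i)$ respectively, with the exceptional node $n$ handled using the entries in positions $(n,n+2)$ and $(n+2,n)$ as in $(B1)$.

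The key steps, in order, are: (1) write down the matrix $\hat R = P\circ R$ from Theorem 3.8 explicitly as an operator on $V\otimes V$, isolating its entries by weight blocks; (2) postulate the above assignment $\phi_n$ on generators and extend multiplicatively and unitally to all of $U(\hat R)$; (3) verify that the two matrix relations $\hat R L^{\pm}_1 L^{\pm}_2 = L^{\pm}_2 L^{\pm}_1 \hat R$ and $\hat R L^{+}_1 L^{-}_2 = L^{-}_2 L^{+}_1 \hat R$, read entry by entry, become precisely the relations $(\rm D1)$--$(\rm D6)$ of $U_{r,s}(\mathfrak{so}_{2n})$ together with the triangularity constraints $l^{+}_{ij} = l^{-}_{ji} = 0$ for $j<i$ and $l^{-}_{ii}l^{+}_{ii} = l^{+}_{ii}l^{-}_{ii}$. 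Since Definition 4.1 already imposes exactly these matrix relations as the defining relations of $U(\hat R)$, it suffices to check that each one maps under $\phi_n$ to a relation that holds in $U_{r,s}(\mathfrak{so}_{2n})$; this is the standard "FRT $\Rightarrow$ DJ" direction, and it is where the bulk of the computation lies. In practice one reduces to low-rank sub-blocks: the RTT relations restricted to a pair of adjacent indices $\{i,i+1\}$ reproduce the rank-one ($\mathfrak{sl}_2$-type) relations, and those restricted to $\{n-1,n,n+1,n+2\}$ (the fork at the branch node) reproduce the remaining relations $(\rm D3)$ and the $(r,s)$-Serre relations $(\rm D5)$--$(\rm D6)$; the commutation $(\rm D4)$ for $[e_i,f_j]$ emerges from the mixed relation $\hat R L^{+}_1 L^{-}_2 = L^{-}_2 L^{+}_1 \hat R$ at the diagonal positions, where the combination $d^{+}_i (d^{-}_i)^{-1}$ produces $\dfrac{w_i - w'^{-1}_i}{r-s}$.

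I would carry out the block reductions as follows: for $|i-j|\geq 2$ the relevant $4\times 4$ sub-$\hat R$-block is essentially diagonal (only the $r^{-1/2}s^{1/2}$ eigenvalue appears on the supporting weight spaces), so the RTT relations force the corresponding Gaussian generators to commute or $rs$-commute, yielding $(\rm D1)$, $(\rm D2)$, and the first lines of $(\rm D5)$; for $|i-j|=1$ the $\hat R$-block is the nontrivial $2$-dimensional deformation carrying eigenvalues $r^{-1/2}s^{1/2}$ and $-r^{1/2}s^{-1/2}$, and expanding $\hat R L^{\pm}_1 L^{\pm}_2 = L^{\pm}_2 L^{\pm}_1 \hat R$ on triples of basis vectors produces the quadratic and cubic relations that, after the rescaling built into $\phi_n$, are the $(r,s)$-Serre relations of $(\rm D6)$. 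The main obstacle I anticipate is the branch node: because $\alpha_n = \epsilon_{n-1}+\epsilon_n$ is "long in a short direction," the entries $l^{\pm}$ supported at the fork do not fit the generic adjacent-index pattern, and one must carefully track the extra $rs$-factors appearing in $(\rm D3)$ and in the relation $e_{n-1}e_n = rs\,e_n e_{n-1}$, matching them against the off-diagonal entries of $\hat R$ that mix the $(n,n+2)$ and $(n-1,n+1)$ positions (these carry the asymmetric structure constants visible in the last rows and columns of the matrix $(\langle w'_i,w_j\rangle)$). Verifying consistency there — in particular that the Gauss-decomposition entry at the branch node is well-defined and that its commutation with the $d^{\pm}_j$ reproduces precisely the weights in $(\rm D3)$ — is the delicate point; everything else is a routine, if lengthy, expansion of the two $R$-matrix relations into scalar equations.
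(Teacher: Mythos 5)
Your overall strategy is the one the paper follows: expand the two matrix relations of Definition 4.1 entry by entry against the explicit $\hat R$ of Theorem 3.8, reduce to low--rank sub-blocks (adjacent indices for the $A$-type part, the four indices around the fork for the branch node), define $\phi_n$ on the near-diagonal entries of $L^{\pm}$ in terms of the Drinfeld--Jimbo generators, and check that the images satisfy the derived relations. Two points in your sketch, however, would fail or mislead if implemented literally. First, the off-diagonal generators must carry a Cartan twist: the paper sets $l^{+}_{i,i+1}\mapsto (r-s)\,e_i\,l^{+}_{ii}$ with $l^{+}_{ii}\mapsto (w'_{\beta_i})^{-1}$, $\beta_i=\alpha_i+\cdots+\alpha_{n-2}+\tfrac12(\alpha_{n-1}+\alpha_n)$, not $l^{+}_{i,i+1}\mapsto (r-s)e_i$. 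The cubic relations extracted from $\hat R L^{+}_1L^{+}_2=L^{+}_2L^{+}_1\hat R$ have the shape $(l^{+}_{12})^2l^{+}_{23}+rs^{-1}l^{+}_{23}(l^{+}_{12})^2=(rs^{-1}+1)l^{+}_{12}l^{+}_{23}l^{+}_{12}$, whose coefficients $rs^{-1}$, $rs^{-1}+1$ only become the $(\rm D6)$ coefficients $rs$, $r+s$ after conjugating the Cartan factors through; with the bare assignment the Serre relations simply do not match. (Your Gauss-decomposition framing $L^{\pm}=N^{\mp}D^{\pm}N^{\pm}$ would supply this factor automatically, but your explicit assignment contradicts it.) Relatedly, the images must be the abstract generators $e_i, f_i, w_i^{\pm1},w_i'^{\pm1}$, not the matrices $T_1(e_i)$, $T_1(w_j)$ — a map into $T_1(U)$ would not be the claimed homomorphism onto $U_{r,s}(\mathfrak{so}_{2n})$.

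Second, at the branch node the paper anchors $e_n$ and $f_n$ at the positions $(n-1,n+1)$ and $(n+1,n-1)$ (i.e.\ $l^{+}_{n-1,n+1}\mapsto (r-s)e_nl^{+}_{n-1,n-1}$, corresponding to the entry $E_{(n+2)',n'}$ of $T_1(e_n)$), not at $(n,n+2)$ as you propose. Both positions carry the weight $\alpha_n=\epsilon_{n-1}+\epsilon_n$, so your choice is not absurd, but it comes with a different Cartan factor and is not the one compatible with the paper's later Drinfeld realization (Section 6 uses $e^{\pm}_{n-1,n+1}(z)$ for $X^{\pm}_n$); you would need to redo the scalar bookkeeping at the fork to confirm your variant closes. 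Finally, note that the entry-by-entry expansion produces relations among \emph{all} the $l^{\pm}_{ij}$ (including composite entries such as $l^{+}_{13}$, $l^{+}_{35}$, $l^{+}_{36}$), not ``precisely $(\rm D1)$--$(\rm D6)$''; the homomorphism check requires verifying the images of this larger family, which is where the bulk of the paper's computation (and its induction from the $n=4$ case to general $n$) actually lives.
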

\begin{proof}
We check the theorem for the case of $n=4$. Let us consider $L^{\pm}$,
\begin{align*}
L^{+}=\left(\begin{array}{cccc}l^{+}_{11}&l^+_{12}&\cdots&l^+_{18}\\0&l^+_{22}&\ddots&\vdots \\ \vdots&\ddots&\ddots&l^+_{78}\\ 0&\cdots&0&l^+_{88} \end{array}\right)_{8\times8},\quad
L^{-}=\left(\begin{array}{cccc}l^{-}_{11}&0&\cdots&0\\l^-_{21}&l^-_{22}&\ddots&\vdots \\ \vdots&\ddots&\ddots&0\\ l^-_{81}&\cdots&l^+_{87}&l^-_{88} \end{array}\right)_{8\times8}.
\end{align*}
then for the generators $L^{\pm}_1$, $L^{\pm}_2$, $\hat{R}$, we have that
\begin{align*}
L^{+}_1=\left(\begin{array}{cccc}l^{+}_{11}I_8&l^+_{12}I_8&\cdots&l^+_{18}I_8\\0&l^+_{22}I_8&\ddots&\vdots \\ \vdots&\ddots&\ddots&l^+_{78}I_8\\ 0&\cdots&0&l^+_{88}I_8 \end{array}\right)_{64\times64},\quad
L^{-}_1=\left(\begin{array}{cccc}l^{-}_{11}I_8&0&\cdots&0\\l^-_{21}I_8&l^-_{22}I_8&\ddots&\vdots \\ \vdots&\ddots&\ddots&0\\ l^-_{81}I_8&\cdots&l^+_{87}I_8&l^-_{88}I_8 \end{array}\right)_{64\times64},
\end{align*}
\begin{align*}
L^{\pm}_2=\left(\begin{array}{cccc}L^{\pm}&0&\cdots&0\\0&L^{\pm}&\ddots&\vdots \\ \vdots&\ddots&\ddots&0\\ 0&\cdots&0&L^{\pm} \end{array}\right)_{64\times64},\quad
\hat{R}=\left(\begin{array}{cccc} A_{11}&A_{12}&\cdots&A_{18}\\0&A_{22}&\ddots&\vdots \\ \vdots&\ddots&\ddots&A_{78}\\ 0&\cdots&0&A_{88} \end{array}\right)_{64\times64},
\end{align*}
\begin{align*}
A_{11}=\left(\begin{array}{cc}A^{\prime}_{11}&0\\0&QA^{\prime-1}_{11}Q \end{array}\right)_{8\times8},\quad
A^{\prime}_{11}=\left(\begin{array}{cccc} r^{-\frac{1}{2}}s^{\frac{1}{2}}&0&0&0\\0&r^{\frac{1}{2}}s^{\frac{1}{2}}&0&0 \\ 0&0&r^{\frac{1}{2}}s^{\frac{1}{2}}&0\\ 0&0&0&r^{\frac{1}{2}}s^{\frac{1}{2}} \end{array}\right)_{4\times4},
\end{align*}
\begin{align*}
A_{22}=\left(\begin{array}{cc}A^{\prime}_{22}&0\\0&QA^{\prime-1}_{22}Q \end{array}\right)_{8\times8},\quad
A^{\prime}_{22}=\left(\begin{array}{cccc} r^{-\frac{1}{2}}s^{-\frac{1}{2}}&0&0&0\\0&r^{-\frac{1}{2}}s^{\frac{1}{2}}&0&0 \\ 0&0&r^{\frac{1}{2}}s^{\frac{1}{2}}&0\\ 0&0&0&r^{\frac{1}{2}}s^{\frac{1}{2}} \end{array}\right)_{4\times4},
\end{align*}
\begin{align*}
A_{33}=\left(\begin{array}{cc}A^{\prime}_{33}&0\\0&QA^{\prime-1}_{33}Q \end{array}\right)_{8\times8},\quad
A^{\prime}_{33}=\left(\begin{array}{cccc} r^{-\frac{1}{2}}s^{-\frac{1}{2}}&0&0&0\\0&r^{-\frac{1}{2}}s^{-\frac{1}{2}}&0&0 \\ 0&0&r^{-\frac{1}{2}}s^{\frac{1}{2}}&0\\ 0&0&0&r^{\frac{1}{2}}s^{\frac{1}{2}} \end{array}\right)_{4\times4},
\end{align*}
\begin{align*}
A_{44}=\left(\begin{array}{cc}A^{\prime}_{44}&0\\0&QA^{\prime-1}_{44}Q \end{array}\right)_{8\times8},\quad
A^{\prime}_{44}=\left(\begin{array}{cccc} r^{-\frac{1}{2}}s^{-\frac{1}{2}}&0&0&0\\0&r^{-\frac{1}{2}}s^{-\frac{1}{2}}&0&0 \\ 0&0&r^{-\frac{1}{2}}s^{-\frac{1}{2}}&0\\ 0&0&0&r^{-\frac{1}{2}}s^{\frac{1}{2}} \end{array}\right)_{4\times4},
\end{align*}
where $Q=\sum\limits^{4}_{i=1}E_{5-i,i}$, $A_{i^{\prime}i^{\prime}}=A^{-1}_{ii}$, and
$$
A_{ij}=(r^{-\frac{1}{2}}s^{\frac{1}{2}}-r^{\frac{1}{2}}s^{-\frac{1}{2}})\Bigl\{E_{ji}-(r^{-\frac{1}{2}}s^{\frac{1}{2}})^{(\rho_{i^{\prime}}-\rho_{j^{\prime}})}E_{i^{\prime}j^{\prime}}\Bigl\},$$
$1\le i<j\le 8$, $E_{ij}\in M(8,\mathbb{K})$, where the multiplication between matrices $\hat{R}$, $L^{\pm}_1$ and $L^{\pm}_2$ is matrix multiplication. From the equation $\hat{R}L^{+}_1L^{+}_2=L^{+}_2L^{+}_1\hat{R}$, we can derive the following calculations:
\begin{align*}
\hat{R}L^{+}_1L^{+}_2(v_1\otimes v_j)=L^{+}_2L^{+}_1\hat{R}(v_1\otimes v_j)\Rightarrow \begin{cases}
l^{+}_{11}l^{+}_{12}=rl^{+}_{12}l^{+}_{11},\quad l^{+}_{11}l^{+}_{23}=l^{+}_{23}l^{+}_{11},\\
l^{+}_{11}l^{+}_{34}=l^{+}_{34}l^{+}_{11},\quad l^{+}_{11}l^{+}_{35}=r^{-1}s^{-1}l^{+}_{35}l^{+}_{11},
\end{cases}
\end{align*}
where $1\le j\le 7$.
\begin{align*}
\hat{R}L^{+}_1L^{+}_2(v_2\otimes v_j)=L^{+}_2L^{+}_1\hat{R}(v_2\otimes v_j)\Rightarrow \begin{cases}
l^{+}_{22}l^{+}_{12}=sl^{+}_{12}l^{+}_{22}, \quad l^{+}_{22}l^{+}_{23}=rl^{+}_{23}l^{+}_{22},\\
l^{+}_{22}l^{+}_{34}=l^{+}_{34}l^{+}_{22},\quad l^{+}_{22}l^{+}_{35}=(rs)^{-1}l^{+}_{35}l^{+}_{22},\\
l^{+}_{12}l^{+}_{34}=l^{+}_{34}l^{+}_{12},\quad l^{+}_{12}l^{+}_{35}=(rs)^{-1}l^{+}_{35}l^{+}_{12},
\end{cases}
\end{align*}
and we have
\begin{align}
&l^{+}_{12}l^{+}_{23}+(r^{-1}-s^{-1})l^{+}_{22}l^{+}_{l3}=l^{+}_{23}l^{+}_{12},\\
&\qquad l^{+}_{12}l^{+}_{13}=rl^{+}_{13}l^{+}_{12},
\end{align}
where $1\le j\le 8$ $(j\neq7)$.
\begin{align*}
\hat{R}L^{+}_1L^{+}_2(v_3\otimes v_j)=L^{+}_2L^{+}_1\hat{R}(v_3\otimes v_j)\Rightarrow \begin{cases}
l^{+}_{33}l^{+}_{12}=l^{+}_{12}l^{+}_{22}, \quad l^{+}_{33}l^{+}_{23}=sl^{+}_{23}l^{+}_{33},\\
l^{+}_{33}l^{+}_{34}=rl^{+}_{34}l^{+}_{33},\quad l^{+}_{33}l^{+}_{35}=s^{-1}l^{+}_{35}l^{+}_{33},
\end{cases}
\end{align*}
and we have
\begin{align}
&l^{+}_{23}l^{+}_{34}+(r^{-1}-s^{-1})l^{+}_{33}l^{+}_{24}=l^{+}_{34}l^{+}_{23},\\
&l^{+}_{23}l^{+}_{35}+(r^{-1}-s^{-1})l^{+}_{33}l^{+}_{25}=(rs)^{-1}l^{+}_{35}l^{+}_{23},\\
&\qquad l^{+}_{23}l^{+}_{24}=rl^{+}_{24}l^{+}_{23},\\
&\qquad l^{+}_{23}l^{+}_{25}=s^{-1}l^{+}_{25}l^{+}_{23},\\
&\qquad l^{+}_{13}l^{+}_{23}=s^{-1}l^{+}_{23}l^{+}_{13},
\end{align}
where $1\le j\le 8$ $(j\neq6)$.
\begin{align*}
\hat{R}L^{+}_1L^{+}_2(v_4\otimes v_j)=L^{+}_2L^{+}_1\hat{R}(v_4\otimes v_j)\Rightarrow \begin{cases}
l^{+}_{44}l^{+}_{12}=l^{+}_{12}l^{+}_{44}, \quad l^{+}_{44}l^{+}_{23}=l^{+}_{23}l^{+}_{44},\\
l^{+}_{44}l^{+}_{34}=sl^{+}_{34}l^{+}_{44},\quad l^{+}_{44}l^{+}_{35}=rl^{+}_{35}l^{+}_{44},
\end{cases}
\end{align*}
and we have
\begin{align}
&\qquad l^{+}_{24}l^{+}_{34}=s^{-1}l^{+}_{34}l^{+}_{24},\\
&\qquad l^{+}_{25}l^{+}_{35}=s^{-1}l^{+}_{35}l^{+}_{25},
\end{align}
where $1\le j\le 8$ $(j\neq5)$. In particular, we get
\begin{align*}
&\hat{R}L^{+}_1L^{+}_2(v_4\otimes v_5)=L^{+}_2L^{+}_1\hat{R}(v_4\otimes v_5),\\
&\hat{R}L^{+}_1L^{+}_2(v_5\otimes v_4)=L^{+}_2L^{+}_1\hat{R}(v_5\otimes v_4),
\end{align*}
then we obtain
\begin{align}
&l^{+}_{34}l^{+}_{35}+rs^{-1}(r^{-\frac{1}{2}}s^{\frac{1}{2}}-r^{\frac{1}{2}}s^{-\frac{1}{2}})l^{+}_{36}l^{+}_{33}=rs^{-1}l^{+}_{35}l^{+}_{34},\\
&r^{-1}sl^{+}_{35}l^{+}_{34}+(r^{-\frac{1}{2}}s^{\frac{1}{2}}-r^{\frac{1}{2}}s^{-\frac{1}{2}})l^{+}_{36}l^{+}_{33}=l^{+}_{34}l^{+}_{35}.
\end{align}
By (4.2), (4.3) and (4.8), we get
\begin{equation*}
\begin{aligned}
&{l^{+}_{12}}^2 l^{+}_{23}+rs^{-1}l^{+}_{23}{l^{+}_{12}}^2=(rs^{-1}+1)l^{+}_{12}l^{+}_{23}l^{+}_{12}, \\
&{l^{+}_{23}}^2 l^{+}_{12}+r^{-1}sl^{+}_{12}{l^{+}_{23}}^2=(r^{-1}s+1)l^{+}_{23}l^{+}_{12}l^{+}_{23}.
\end{aligned}
\end{equation*}
By equations (4.4), (4.6) and (4.9), we get
\begin{equation*}
\begin{aligned}
&{l^{+}_{23}}^2 l^{+}_{34}+rs^{-1}l^{+}_{34}{l^{+}_{23}}^2=(rs^{-1}+1)l^{+}_{23}l^{+}_{34}l^{+}_{23}, \\
&{l^{+}_{34}}^2 l^{+}_{23}+r^{-1}sl^{+}_{23}{l^{+}_{34}}^2=(r^{-1}s+1)l^{+}_{34}l^{+}_{23}l^{+}_{34}.
\end{aligned}
\end{equation*}
By equations (4.5), (4.7) and (4.10), we get
\begin{equation*}
\begin{aligned}
&s^2{l^{+}_{23}}^2 l^{+}_{35}+(rs)^{-1}l^{+}_{35}{l^{+}_{23}}^2=(r^{-1}s+1)l^{+}_{23}l^{+}_{35}l^{+}_{23}, \\
&(rs)^{-1}{l^{+}_{35}}^2 l^{+}_{23}+s^2l^{+}_{23}{l^{+}_{35}}^2=(r^{-1}s+1)l^{+}_{35}l^{+}_{23}l^{+}_{35}.
\end{aligned}
\end{equation*}
By equations (4.11) and (4.12), we get
\begin{equation*}
l^{+}_{34}l^{+}_{35}=l^{+}_{35}l^{+}_{34}.
\end{equation*}
For the equation $\hat{R}L^{-}_1L^{-}_2=L^{-}_2L^{-}_1\hat{R}$, we can repeat the similar calculation process as above. Then we define a morphism $\phi_4: U(\hat{R})\rightarrow U_{r,s}(\mathfrak{so}_{8}):$
\begin{equation*}
\begin{aligned}
l^+_{11}&\mapsto  (w^{\prime}_1w^{\prime}_2{w^{\prime}_3}^{\frac{1}{2}}{w^{\prime}_4}^{\frac{1}{2}})^{-1},
&l^+_{12}&\mapsto (r-s)e_1l^+_{11},\\
l^+_{22}&\mapsto( w^{\prime}_2{w^{\prime}_3}^{\frac{1}{2}}{w^{\prime}_4}^{\frac{1}{2}})^{-1},    &l^+_{23}&\mapsto (r-s)e_2l^+_{22},\\
l^+_{33}&\mapsto (w^{\prime}_3w^{\prime}_4)^{-\frac{1}{2}},       &l^+_{34}&\mapsto (r-s)e_3l^+_{33},\\
l^+_{44}&\mapsto (w^{\prime-1}_3w^{\prime}_4)^{-\frac{1}{2}},  &l^+_{35}&\mapsto (r-s)e_4l^+_{33},\\
l^-_{11}&\mapsto (w_1w_2(w_3w_4)^{\frac{1}{2}})^{-1}, &l^-_{21}&\mapsto -(r-s)l^-_{11}f_1,\\
l^-_{22}&\mapsto (w_2(w_3w_4)^{\frac{1}{2}})^{-1},    &l^-_{32}&\mapsto -(r-s)l^-_{22}f_2,\\
l^-_{33}&\mapsto (w_3w_4)^{-\frac{1}{2}},       &l^-_{43}&\mapsto -(r-s)l^-_{33}f_3,\\
l^-_{44}&\mapsto (w^{-1}_3w_4)^{-\frac{1}{2}},  &l^-_{53}&\mapsto -(r-s)l^-_{33}f_4,\\
l^+_{i^{\prime}i^{\prime}}&\mapsto (l^+_{ii})^{-1}, &l^-_{i^{\prime}i^{\prime}}&\mapsto (l^-_{ii})^{-1},
\end{aligned}
\end{equation*}
where $1\le i\le 4$. It is obvious that $\phi_4$ still preserves the algebra structure, the relations in $\mathcal{B}$ and $\mathcal{B}^{\prime}$, respectively. Next, we need to ensure that $\phi_4$ preserves the cross relations of $\mathcal{B}$ and $\mathcal{B}^{\prime}$. Considering the equation $\hat{R}L^{+}_1L^{-}_2=L^{-}_2L^{+}_1\hat{R}$, we have
\begin{align*}
\hat{R}L^{+}_1L^{-}_2(v_1\otimes v_j)=L^{-}_2L^{+}_1\hat{R}(v_1\otimes v_j)\Rightarrow \begin{cases}
l^{+}_{11}l^{-}_{21}=r^{-1}l^{-}_{21}l^{+}_{11},\quad l^{+}_{11}l^{-}_{32}=l^{-}_{32}l^{+}_{11},\\
l^{+}_{11}l^{-}_{43}=l^{-}_{43}l^{+}_{11},\quad l^{+}_{11}l^{-}_{53}=rsl^{-}_{53}l^{+}_{11},
\end{cases}
\end{align*}
where $1\le j\le 7$.
\begin{align*}
\hat{R}L^{+}_1L^{-}_2(v_2\otimes v_j)=L^{-}_2L^{+}_1\hat{R}(v_2\otimes v_j)\Rightarrow \begin{cases}
l^{+}_{22}l^{-}_{21}=s^{-1}l^{-}_{21}l^{+}_{22},\quad l^{+}_{22}l^{-}_{32}=r^{-1}l^{-}_{32}l^{+}_{22},\\
l^{+}_{22}l^{-}_{43}=l^{-}_{43}l^{+}_{22},\quad l^{+}_{22}l^{-}_{53}=rsl^{-}_{53}l^{+}_{22},\\
rsl^{+}_{12}l^{-}_{21}-l^{-}_{21}l^{+}_{12}=(s-r)(l^-_{22}l^+_{11}-l^+_{22}l^-_{11}),\\
l^{-}_{32}l^{+}_{12}=rl^{+}_{12}l^{-}_{32},\quad l^{-}_{43}l^{+}_{12}=l^{+}_{12}l^{-}_{43},\\
l^{+}_{12}l^{-}_{53}=rsl^{-}_{53}l^{+}_{12},\quad l^{-}_{11}l^{+}_{12}=sl^{+}_{12}l^{-}_{11},\\
l^{-}_{22}l^{+}_{12}=rl^{+}_{12}l^{-}_{22},\quad l^{-}_{33}l^{+}_{12}=l^{+}_{12}l^{-}_{33},\\
l^{-}_{44}l^{+}_{12}=l^{+}_{12}l^{-}_{44},
\end{cases}
\end{align*}
where $1\le j\le 8$ $(j\neq7)$.
\begin{align*}
\hat{R}L^{+}_1L^{-}_2(v_3\otimes v_j)=L^{-}_2L^{+}_1\hat{R}(v_3\otimes v_j)\Rightarrow \begin{cases}
l^{+}_{33}l^{-}_{21}=l^{-}_{21}l^{+}_{33},\quad l^{+}_{33}l^{-}_{32}=s^{-1}l^{-}_{32}l^{+}_{33},\\
l^{+}_{33}l^{-}_{43}=r^{-1}l^{-}_{43}l^{+}_{33},\quad l^{+}_{33}l^{-}_{53}=sl^{-}_{53}l^{+}_{33},\\
l^{+}_{23}l^{-}_{21}=s^{-1}l^{-}_{21}l^{+}_{23},\\
rsl^{+}_{23}l^{-}_{32}-l^{-}_{32}l^{+}_{23}=(s-r)(l^-_{33}l^+_{22}-l^+_{33}l^-_{22}),\\
l^{+}_{23}l^{-}_{43}=r^{-1}l^{-}_{43}l^{+}_{23},\quad l^{+}_{23}l^{-}_{53}=sl^{-}_{53}l^{+}_{23},\\
l^{+}_{23}l^{-}_{11}=l^{-}_{11}l^{+}_{23},\quad l^{+}_{23}l^{-}_{22}=s^{-1}l^{-}_{22}l^{+}_{23},\\
l^{+}_{23}l^{-}_{33}=r^{-1}l^{-}_{33}l^{+}_{23},\quad l^{+}_{23}l^{-}_{44}=l^{-}_{44}l^{+}_{23},
\end{cases}
\end{align*}
where $1\le j\le 8$ $(j\neq6)$.
\begin{align*}
\hat{R}L^{+}_1L^{-}_2(v_4\otimes v_j)=L^{-}_2L^{+}_1\hat{R}(v_4\otimes v_j)\Rightarrow \begin{cases}
l^{+}_{34}l^{-}_{11}=l^{-}_{11}l^{+}_{34}, \quad l^{+}_{34}l^{-}_{22}=l^{-}_{22}l^{+}_{34}, \\
l^{+}_{34}l^{-}_{33}=s^{-1}l^{-}_{33}l^{+}_{34},\quad l^{+}_{34}l^{-}_{44}=r^{-1}l^{-}_{44}l^{+}_{34},\\
l^{+}_{34}l^{-}_{21}=l^{-}_{21}l^{+}_{34},\quad l^{+}_{34}l^{-}_{32}=s^{-1}l^{-}_{32}l^{+}_{34},\\
rsl^{+}_{34}l^{-}_{43}-l^{-}_{43}l^{+}_{34}=(s-r)(l^-_{44}l^+_{33}-l^+_{44}l^-_{33}),\\
l^{+}_{34}l^{-}_{53}=l^{-}_{53}l^{+}_{34},\\
l^{+}_{44}l^{-}_{21}=l^{-}_{21}l^{+}_{44},\quad l^{+}_{44}l^{-}_{32}=l^{-}_{32}l^{+}_{44},\\
l^{+}_{44}l^{-}_{43}=s^{-1}l^{-}_{43}l^{+}_{44},\quad l^{+}_{44}l^{-}_{53}=sl^{-}_{53}l^{+}_{44},
\end{cases}
\end{align*}
where $1\le j\le 8$ $(j\neq5)$.
\begin{align*}
\hat{R}L^{+}_1L^{-}_2(v_5\otimes v_j)=L^{-}_2L^{+}_1\hat{R}(v_5\otimes v_j)\Rightarrow \begin{cases}
l^{+}_{35}l^{-}_{11}=rsl^{-}_{11}l^{+}_{35}, \quad l^{+}_{35}l^{-}_{22}=rsl^{-}_{22}l^{+}_{35},\\
l^{+}_{35}l^{-}_{33}=rl^{-}_{33}l^{+}_{35},\quad l^{+}_{35}l^{-}_{44}=r^{-1}l^{-}_{44}l^{+}_{35},\\
l^{+}_{35}l^{-}_{21}=rsl^{-}_{21}l^{+}_{35},\quad l^{+}_{35}l^{-}_{32}=rl^{-}_{32}l^{+}_{35},\\
l^{+}_{35}l^{-}_{43}=l^{-}_{43}l^{+}_{35},\\
rsl^{+}_{35}l^{-}_{53}-l^{-}_{53}l^{+}_{35}=(s-r)(l^-_{55}l^+_{33}-l^+_{55}l^-_{33}),
\end{cases}
\end{align*}
where $1\le j\le 8$.
Now we proceed to the case of general $n$, restricting the generating relations (4.1) to $E_{ij}\otimes E_{kl},$ $2\le i,j,k,l\le 2n-1$, by induction, we get all commutation relations except those between $l^{\pm}_{11},l^{+}_{12},l^{-}_{21}$ and $l^{\pm}_{ii},l^{\pm}_{ij}$. Repeating similar computations as above, we have the following relations:
\medskip\noindent
$(B1)$ The $l^{\pm1}_{11},l^{\pm1}_{ii}$ all commute with one another and $l^{\pm1}_{11}(l^{\pm1}_{11})^{-1}=l^{\pm1}_{ii}(l^{\pm1}_{ii})^{-1}=1$.\\
$(B2)$ For $3\le i\le n$, we have
\begin{equation*}
\begin{aligned}
l^{+}_{ii}l^{+}_{12}&=l^{+}_{12}l^+_{ii},\quad  &l^{-}_{ii}l^{+}_{12}&=l^{+}_{12}l^{-}_{ii},\\
l^{+}_{ii}l^{-}_{21}&=l^{-}_{21}l^+_{ii},\quad  &l^{-}_{ii}l^{-}_{21}&=l^{-}_{21}l^{-}_{ii},\\
l^{+}_{22}l^{+}_{12}&=s^{-1}l^{+}_{12}l^+_{22},\quad &l^{+}_{22}l^{-}_{21}&=sl^{-}_{21}l^+_{22},\\
l^{-}_{22}l^{+}_{12}&=r^{-1}l^{+}_{12}l^{-}_{22},\quad &l^{-}_{22}l^{-}_{21}&=rl^{-}_{21}l^{-}_{22}.
\end{aligned}
\end{equation*}
$(B3)$ For $1\le i\le n$, we have
\begin{equation*}
\begin{aligned}
l^{+}_{11}l^{+}_{i,i+1}&=l^{+}_{i,i+1}l^+_{11},\quad& l^{+}_{11}l^{+}_{n-1,n+1}&=(rs)^{-1}l^{+}_{n-1,n+1}l^{+}_{11},\\
l^{-}_{11}l^{-}_{i+1,i}&=l^{-}_{i+1,i}l^-_{11},\quad& l^{-}_{11}l^{-}_{n+1,n-1}&=(rs)^{-1}l^{-}_{n+1,n-1}l^{-}_{11},\\
l^{-}_{11}l^{+}_{i,i+1}&=l^{-}_{i,i+1}l^+_{11},\quad& l^{-}_{11}l^{+}_{n-1,n+1}&=rsl^{+}_{n-1,n+1}l^{-}_{11},\\
l^{+}_{11}l^{-}_{i+1,i}&=l^{-}_{i+1,i}l^+_{11},\quad& l^{+}_{11}l^{-}_{n+1,n-1}&=rsl^{-}_{n+1,n-1}l^{+}_{11}.
\end{aligned}
\end{equation*}
$(B4)$
\begin{equation*}
\begin{aligned}
(l^{+}_{12})^2 l^{+}_{23}+rs^{-1}l^{+}_{23}(l^{+}_{12})^2=(rs^{-1}+1)l^{+}_{12}l^{+}_{23}l^{+}_{12}, \\
(l^{+}_{23})^2 l^{+}_{12}+r^{-1}sl^{+}_{12}(l^{+}_{23})^2=(r^{-1}s+1)l^{+}_{23}l^{+}_{12}l^{+}_{23}, \\
(l^{-}_{21})^2 l^{-}_{32}+rs^{-1}l^{-}_{32}(l^{-}_{21})^2=(rs^{-1}+1)l^{-}_{21}l^{-}_{32}l^{-}_{21}, \\
(l^{-}_{32})^2 l^{-}_{21}+r^{-1}sl^{-}_{21}(l^{-}_{32})^2=(r^{-1}s+1)l^{-}_{32}l^{-}_{21}l^{-}_{32}.
\end{aligned}
\end{equation*}
$(B5)$ For $3\le i\le n-1$, we have
\begin{equation*}
\begin{aligned}
l^{+}_{12}l^{+}_{i,i+1}&=l^{+}_{i,i+1}l^{+}_{12}, \quad  &l^{+}_{12}l^{+}_{n-1,n+1}&=(rs)^{-1}l^{+}_{n-1,n+1}l^{+}_{12},\\
l^{-}_{21}l^{+}_{i,i+1}&=l^{+}_{i,i+1}l^{-}_{21}, \quad  &l^{-}_{21}l^{+}_{n-1,n+1}&=(rs)^{-1}l^{+}_{n-1,n+1}l^{-}_{21},\\
l^{+}_{12}l^{-}_{i+1,i}&=l^{-}_{i+1,i}l^{+}_{12}, \quad  &l^{+}_{12}l^{-}_{n+1,n-1}&=rsl^{-}_{n+1,n-1}l^{+}_{12},\\
l^{-}_{21}l^{-}_{i+1,i}&=l^{-}_{i+1,i}l^{-}_{21}, \quad  &l^{-}_{21}l^{-}_{n+1,n-1}&=(rs)^{-1}l^{-}_{n+1,n-1}l^{-}_{21}.
\end{aligned}
\end{equation*}
We give explicit expressions of the $L$-functionals $l^{\pm}_{ij}$ in terms of the generators of $U_{r,s}(\mathfrak{so}_{2n})$. Define $\phi_n: U(\hat{R})\rightarrow U_{r,s}(\mathfrak{so}_{2n})$ as follows
\begin{equation*}
\begin{aligned}
l^+_{ii}&\mapsto (w^{\prime}_{\beta_{i}})^{-1},    &l^+_{i,i+1}&\mapsto (r-s)e_il^+_{ii},\\
l^-_{ii}&\mapsto (w_{\beta_{i}})^{-1},              &l^-_{i+1,i}&\mapsto -(r-s)l^-_{ii}f_i,\\
l^+_{nn}&\mapsto (w^{\prime}_{\beta_n})^{-1},  &l^+_{n-1,n+1}&\mapsto (r-s)e_nl^+_{n-1,n-1},\\
l^-_{nn}&\mapsto (w_{\beta_n})^{-1},  &l^-_{n+1,n-1}&\mapsto -(r-s)l^-_{n-1,n-1}f_n,\\
l^+_{i^{\prime}i^{\prime}}&\mapsto w^{\prime}_{\beta_{i}},    &l^-_{i^{\prime}i^{\prime}}&\mapsto w_{\beta_{i}},
\end{aligned}
\end{equation*}
where $\beta_i=\alpha_i+\cdots+\alpha_{n-2}+\frac{1}{2}(\alpha_{n-1}+\alpha_n)$, $\beta_n=\frac{1}{2}(\alpha_{n}-\alpha_{n-1})$, $1\le i\le n-1$. By induction, we can prove that $\phi_n$ still preserves the structure of algebra $U_{r,s}(\mathfrak{so}_{2n})$.
\end{proof}
\begin{theorem}
$\phi_n: U(\hat{R})\rightarrow U_{r,s}(\mathfrak{so}_{2n})$ is an algebraic isomorphism.
\end{theorem}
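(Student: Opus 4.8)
The plan is to prove $\phi_n$ is bijective by producing a two-sided inverse. As $\phi_n$ is already an algebra homomorphism by the previous theorem, it is enough to construct an algebra homomorphism $\psi_n:U_{r,s}(\mathfrak{so}_{2n})\to U(\hat{R})$ with $\phi_n\circ\psi_n=\mathrm{id}$ and $\psi_n\circ\phi_n=\mathrm{id}$.

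I would define $\psi_n$ on the Drinfeld--Jimbo generators by inverting the explicit assignments defining $\phi_n$. For the root vectors,
\[
\psi_n(e_i)=(r-s)^{-1}l^+_{i,i+1}(l^+_{ii})^{-1},\quad \psi_n(f_i)=-(r-s)^{-1}(l^-_{ii})^{-1}l^-_{i+1,i}\quad(1\le i\le n-1),
\]
\[
\psi_n(e_n)=(r-s)^{-1}l^+_{n-1,n+1}(l^+_{n-1,n-1})^{-1},\quad \psi_n(f_n)=-(r-s)^{-1}(l^-_{n-1,n-1})^{-1}l^-_{n+1,n-1};
\]
for the Cartan part, $\psi_n$ sends $w_i^{\pm1}$, ${w'_i}^{\pm1}$ to the monomials in the diagonal functionals $l^-_{jj}$, $l^+_{jj}$ determined by inverting $l^+_{ii}\mapsto({w'_{\beta_i}})^{-1}$, $l^-_{ii}\mapsto(w_{\beta_i})^{-1}$ (in the orthonormal coordinates one has $\beta_i=\epsilon_i$, so that $\alpha_i=\beta_i-\beta_{i+1}$ for $i\le n-2$, $\alpha_{n-1}=\beta_{n-1}-\beta_n$, $\alpha_n=\beta_{n-1}+\beta_n$, whence the resulting monomials involve only integral powers of the $l^{\pm}_{jj}$). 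The crux is to verify that $\psi_n$ respects (D1)--(D6). Relations (D1)--(D4) reduce to the commutation relations among $l^{\pm}_{ii}$, $l^+_{12}$, $l^-_{21}$, $l^{\pm}_{i,i+1}$ recorded in (B1)--(B3) and their general-rank analogues; relation (D5) follows from the commutation relations among off-diagonal functionals established in the previous proof (the $\mathfrak{so}_8$ computation being the prototype); and the $(r,s)$-Serre relations (D6) follow from the cubic relations (B4) after cancelling the invertible diagonal factors $(l^{\pm}_{ii})^{\pm1}$.

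Granting that $\psi_n$ is well-defined, $\phi_n\circ\psi_n$ fixes each $e_i,f_i,w_i^{\pm1},{w'_i}^{\pm1}$ by a one-line computation (for instance $\phi_n\psi_n(e_i)=(r-s)^{-1}\phi_n(l^+_{i,i+1})\phi_n(l^+_{ii})^{-1}=(r-s)^{-1}(r-s)e_i({w'_{\beta_i}})^{-1}w'_{\beta_i}=e_i$), hence $\phi_n\circ\psi_n=\mathrm{id}$. Conversely $\psi_n\circ\phi_n$ fixes the ``simple'' functionals $l^{\pm}_{ii}$, $l^+_{i,i+1}$, $l^+_{n-1,n+1}$, $l^-_{i+1,i}$, $l^-_{n+1,n-1}$; and since every remaining off-diagonal entry $l^{\pm}_{ij}$ is produced from these by iterated commutators through the $RLL$-relations (exactly as $l^+_{13}$ is obtained from $l^+_{12},l^+_{23},l^+_{22}$ via (4.2)--(4.3), and inductively in the remaining cases), these functionals generate $U(\hat{R})$, so $\psi_n\circ\phi_n=\mathrm{id}$. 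Hence $\phi_n$ is an algebra isomorphism. In particular this also recovers the triangular decomposition of $U(\hat{R})$ --- into the subalgebras generated respectively by the off-diagonal entries of $L^-$, by those of $L^+$, and by the diagonal entries --- compatibly with the decomposition of the Corollary.

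The main obstacle is the well-definedness of $\psi_n$, which has two delicate ingredients. First, confirming the Serre relations (D6) and the exceptional rank-$n$ relation (D5) inside $U(\hat{R})$ requires a careful weight-component analysis of the matrix equations $\hat{R}L^\pm_1L^\pm_2=L^\pm_2L^\pm_1\hat{R}$; the explicit $\mathfrak{so}_8$ computation of the preceding proof is the model, and the passage to arbitrary $n$ relies on the principal-submatrix structure of $\hat{R}$, which realizes $U(\hat{R})$ of rank $n-1$ as a subalgebra of $U(\hat{R})$ of rank $n$. Second, one must check that the $RLL$-relations genuinely express all the $l^{\pm}_{ij}$ in terms of the simple $L$-functionals, since this is precisely what makes the identity $\psi_n\circ\phi_n=\mathrm{id}$ verifiable on generators.
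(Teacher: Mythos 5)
Your proposal follows essentially the same route as the paper: the paper also proves surjectivity by noting the image contains all Drinfeld--Jimbo generators and then constructs the same inverse homomorphism $\psi_n$ (with $e_i\mapsto\frac{1}{r-s}l^+_{i,i+1}(l^+_{ii})^{-1}$, $f_i\mapsto\frac{1}{s-r}(l^-_{ii})^{-1}l^-_{i+1,i}$, and the Cartan generators sent to ratios of diagonal functionals), deferring the verification that $\psi_n$ preserves relations to the computations of the preceding theorem exactly as you do. Your additional remarks --- checking $\phi_n\circ\psi_n=\mathrm{id}$ as well, and explaining why $\psi_n\circ\phi_n=\mathrm{id}$ suffices because the simple $L$-functionals generate $U(\hat R)$ via the $RLL$-relations --- only make explicit what the paper leaves implicit, so the approach is the same.
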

\begin{proof}
It is easy to check that the image of $\phi_n$ contains all generators of $U_{r,s}(\mathfrak{so}_{2n})$. Therefore, $\phi_n$ is surjective.

It remains to show that $\phi_n$ is injective. To this end, we need to construct an algebra homomorphism $\psi_n: U_{r,s}(\mathfrak{so}_{2n})\rightarrow U(\hat{R})$
\begin{equation*}
\begin{aligned}
e_i&\mapsto \frac{1}{r-s}l^+_{i,i+1}(l^+_{ii})^{-1},    &f_i&\mapsto \frac{1}{s-r}(l^-_{ii})^{-1}l^-_{i+1,i},\\
w^{\prime}_i&\mapsto (l^+_{ii})^{-1}l^+_{i+1,i+1}, &w_i&\mapsto (l^-_{ii})^{-1}l^-_{i+1,i+1},\\
e_n&\mapsto \frac{1}{r-s}l^+_{n-1,n+1}(l^+_{n-1,n-1})^{-1},  &f_n&\mapsto \frac{1}{s-r}(l^-_{n-1,n-1})^{-1}l^-_{n+1,n-1},\\
w^{\prime}_n&\mapsto (l^+_{nn})^{-1}l^+_{n-1,n-1}, &w_n&\mapsto (l^-_{nn})^{-1}l^-_{n-1,n-1},
\end{aligned}
\end{equation*}
which satisfies $\psi_n\circ \phi_n = \text{id}$.

To prove that $\psi_n$ still preserves the algebra structure of $U(\hat{R})$ is completely similar to that of Theorem 4.2. Hence, $\phi_n$ is injective. (For a similar proof in the one-parameter setting, one can refer to Section 8.5 of \cite{JLM1}).
\end{proof}
\begin{proposition}
For the braiding $R$-matrix $R=R_{VV}$, the spectral parameter dependent $R(z)$ is given by
\begin{flalign*}
R(z)&=\sum\limits_{i=1}^{2n}E_{ii}\otimes E_{ii}+\frac{rs(z-1)}{rz-s}\Bigl\{\sum\limits_{1\le i\le n-1\atop i+1\le j\le n}E_{ij}\otimes E_{ji}+\sum\limits_{1\le i\le n-1\atop i^{\prime}+1\le j\le 2n}E_{ij}\otimes E_{ji}\\
&\quad+\sum\limits_{j=n+2}^{2n}E_{nj}\otimes E_{jn}+\sum\limits_{n+1\le i\le 2n-1\atop i+1\le j\le 2n}E_{ji}\otimes E_{ij}+\sum\limits_{1\le i\le n-1\atop n+1\le j\le 2n-i}E_{ji}\otimes E_{ij}\Bigr\}\\
&\quad +\frac{z{-}1}{rz{-}s}\Bigl\{\sum\limits_{1\le i\le n-1\atop i+1\le j\le n}E_{ji}{\otimes} E_{ij}+\sum\limits_{1\le i\le n-1\atop i^{\prime}+1\le j\le 2n}E_{ji}{\otimes} E_{ij}+\sum\limits_{j=n+2}^{2n}E_{jn}{\otimes} E_{nj}\\
&\quad+\sum\limits_{n+1\le i\le 2n-1\atop i+1\le j\le 2n}E_{ij}{\otimes} E_{ji}{+}\sum\limits_{1\le i\le n-1\atop n+1\le j\le 2n-i}E_{ij}{\otimes} E_{ji}\Bigr\}{+}\frac{r{-}s}{rz{-}s}\bigl\{z\sum_{\substack{i<j\\i^{\prime}\neq j}}E_{jj}{\otimes} E_{ii}\\
&\quad+\sum_{\substack{i>j\\i\neq j^{\prime}}} E_{jj}\otimes E_{ii}\Bigr\}+\frac{1}{(z-r^{1-n}s^{n-1})(rz-s)}
\sum\limits_{i,j=1}^{2n}d_{ij}(z)E_{ij^{\prime}}\otimes E_{i^{\prime}j},
\end{flalign*}
where
$d_{ij}(z)=\left\{
\begin{aligned}
&(s{-}r)z\Bigl\{(r^{-\frac{1}{2}}s^{\frac{1}{2}})^{\rho_j-\rho_i}
(z{-}1)-\delta_{ij^{\prime}}[z{-}(rs^{-1})^{1-n}]\Bigr\}, \quad i>j,\\
&(s{-}r)\Bigl\{(r^{-\frac{1}{2}}s^{\frac{1}{2}})^{\rho_j-\rho_i+2n-2}(z{-}1)-\delta_{ij^{\prime}}[z{-}(rs^{-1})^{1-n}]\Bigr\}, \quad i<j,\\
&s\Bigl[z-(rs^{-1})^{2-n}\Bigr](z{-}1), \quad i=j.\\
\end{aligned}\right.$
\end{proposition}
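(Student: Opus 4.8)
The plan is to exhibit $R(z)$ as the Yang--Baxterization of the basic $R$-matrix $R=R_{V,V}$ obtained in Theorem 3.8, following the Reshetikhin--Semenov-Tian-Shansky scheme used in \cite{FRT90,JLM,JLM1,JL}. By Lemmas 3.3--3.6 and Proposition 3.7 the module $V\otimes V=S^{o}(V\otimes V)\oplus S^{\prime}(V\otimes V)\oplus\Lambda(V\otimes V)$ is multiplicity-free, $R$ acts on the three summands as the distinct scalars $\mu_{S^{\prime}}=r^{-\frac{1}{2}}s^{\frac{1}{2}}$, $\mu_{\Lambda}=-r^{\frac{1}{2}}s^{-\frac{1}{2}}$, $\mu_{S^{o}}=r^{\frac{2n-1}{2}}s^{-\frac{2n-1}{2}}$, and $R$ satisfies the braid relation (Theorem 2.7, Remark 3.9). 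Let $P_{S^{\prime}},P_{\Lambda},P_{S^{o}}$ be the associated spectral projections. The first step is to write these three projections out explicitly in the matrix-unit basis of $\mathrm{End}(V\otimes V)$, reading them off from the bases of the summands listed in Lemmas 3.3--3.5: each pair $\{v_{i}\otimes v_{j},\,v_{j}\otimes v_{i}\}$ with $i\neq j,\ i^{\prime}\neq j$ contributes a rank-one block to $P_{S^{\prime}}$ and to $P_{\Lambda}$, the vectors $v_{i}\otimes v_{i}$ lie in $S^{\prime}$, and the weight-zero subspace $\bigoplus_{i}\mathbb{K}\,v_{i}\otimes v_{i^{\prime}}$ splits into lines belonging to $S^{\prime}$, $\Lambda$ and $S^{o}$, the last one spanned by the vector $\sum_{i}a_{i}v_{i^{\prime}}\otimes v_{i}$ of Lemma 3.3.

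Next, one forms the one-parameter family $\hat{R}(z)=f_{S^{\prime}}(z)\,P_{S^{\prime}}+f_{\Lambda}(z)\,P_{\Lambda}+f_{S^{o}}(z)\,P_{S^{o}}$, where $f_{S^{\prime}},f_{\Lambda},f_{S^{o}}$ are the rational functions of $z$ prescribed by Jimbo's Yang--Baxterization recipe for an $\hat{R}$ of orthogonal type with three eigenvalues: the poles of the ratios $f_{\Lambda}/f_{S^{\prime}}$ and $f_{S^{o}}/f_{S^{\prime}}$ sit at ratios of $\mu_{S^{\prime}},\mu_{\Lambda},\mu_{S^{o}}$, which is what produces the two denominators $rz-s$ and $z-r^{1-n}s^{n-1}$, and the overall scalar is fixed so that the coefficient of $E_{ii}\otimes E_{ii}$ in $R(z):=P\circ\hat{R}(z)$ equals $1$. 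Re-expanding this linear combination back into matrix units must then reproduce the five groups of terms in the asserted formula: $f_{S^{\prime}},f_{\Lambda}$ acting on the pairs $\{v_{i}\otimes v_{j},v_{j}\otimes v_{i}\}$ give the coefficients $\frac{rs(z-1)}{rz-s}$ and $\frac{z-1}{rz-s}$; the mixing of the $v_{i}\otimes v_{i}$ with the $v_{i}\otimes v_{j}$ inside $P_{S^{\prime}}$ produces the diagonal correction with coefficient $\frac{r-s}{rz-s}$; and the final sum, with coefficients $d_{ij}(z)$, comes entirely from $f_{S^{o}}(z)P_{S^{o}}$ on the weight-zero block, the three cases $i>j$, $i<j$, $i=j$ of $d_{ij}(z)$ and the exponents $(r^{-\frac{1}{2}}s^{\frac{1}{2}})^{\rho_{j}-\rho_{i}}$ being dictated by the coefficients $a_{i}=(rs^{-1})^{(n-i)/2}$, $a_{n+1}=1$, $a_{i}=(rs^{-1})^{(n-i+1)/2}$ of the $S^{o}$-generator.

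It then remains to verify the quantum Yang--Baxter equation with spectral parameters, $R_{12}(z)R_{13}(zw)R_{23}(w)=R_{23}(w)R_{13}(zw)R_{12}(z)$ (equivalently, the spectral braid relation for $\hat{R}(z)=P\circ R(z)$), and to check that $R(z)$ degenerates, up to the overall scalar and in the relevant limit of $z$, to the basic $R$-matrix $R$ of Theorem 3.8 (and to its inverse in the opposite limit). The cleanest route to the QYBE is the general Yang--Baxterization theorem used in \cite{FRT90,JLM,JLM1,JL}: once $R$ satisfies the braid relation and has three distinct eigenvalues, the family built from the spectral projections with the prescribed rational coefficients automatically solves the spectral Yang--Baxter equation; an independent check, which also pins down the coefficients, is to transport the known one-parameter spectral $R$-matrix of type $D_{n}^{(1)}$ through a Drinfeld twist --- the twist $\widetilde{f}$ does not involve $z$, hence preserves the spectral QYBE and merely turns the $z$-independent one-parameter prefactors into the $(rs^{-1})$-powers appearing in $d_{ij}(z)$ --- and to match the outcome with the stated $R(z)$. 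If a self-contained argument is preferred, the spectral braid relation can instead be checked by direct substitution on the spanning vectors $v_{i}\otimes v_{j}\otimes v_{k}$, in the same style as the matrix computations $\hat{R}L_{1}L_{2}=L_{2}L_{1}\hat{R}$ carried out in the proof of Theorem 4.2.

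The main obstacle is the explicit, correctly normalized evaluation of $f_{S^{o}}(z)P_{S^{o}}$, that is, the derivation of $d_{ij}(z)$ with its split into the cases $i>j$, $i<j$, $i=j$, the exact exponents in $(r^{-\frac{1}{2}}s^{\frac{1}{2}})^{\rho_{j}-\rho_{i}}$, the $\delta_{ij^{\prime}}$-corrections and the pole at $z=r^{1-n}s^{n-1}$. This forces one to invert, inside the $2n$-dimensional weight-zero subspace $\bigoplus_{i}\mathbb{K}\,v_{i}\otimes v_{i^{\prime}}$, the change of basis between the vectors $v_{i^{\prime}}\otimes v_{i}$ and a basis adapted to the splitting $S^{o}\oplus S^{\prime}\oplus\Lambda$ (each of which meets this subspace in a line); it is exactly the $r/s$-asymmetry, absent in the one-parameter case, that makes the $S^{o}$-coefficients $a_{i}$ genuinely $n$-dependent and is responsible for all the $\rho$-exponents, the $\delta_{ij^{\prime}}$-terms and the extra pole, and keeping track of these factors consistently across the three cases is the delicate point. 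A secondary, lengthy-but-routine task is carrying out the direct spectral-QYBE verification, should one not wish to invoke the abstract Yang--Baxterization theorem.
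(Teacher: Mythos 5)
The paper states this proposition without any proof at all --- the only justification offered is the subsequent remark asserting, with no detail, that $\hat R(z)=P\circ R(z)$ satisfies the spectral Yang--Baxter equation and the unitarity condition --- so there is nothing in the paper to compare your argument against line by line. Your plan (spectral projections onto $S^{o}$, $S^{\prime}$, $\Lambda$, Jimbo-style Yang--Baxterization with poles at the eigenvalue ratios $z=s/r$ and $z=r^{1-n}s^{n-1}$, normalization by the coefficient of $E_{ii}\otimes E_{ii}$, and explicit evaluation of the weight-zero block to extract $d_{ij}(z)$) is precisely the method the authors announce in the abstract and introduction, and it is sound; the one caveat is that the decisive computation --- the change of basis on $\bigoplus_i\mathbb{K}\,v_i\otimes v_{i^{\prime}}$ that produces the three cases of $d_{ij}(z)$, the $\rho$-exponents and the $\delta_{ij^{\prime}}$-corrections --- is only described, not executed, so what you have is a correct and well-motivated strategy rather than a completed verification.
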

\begin{remark}
\rm Consider the $\hat R$-matrix $\hat R(z)=P\circ R(z)$, where $P$ is defined as in Remark 3.9
\begin{flalign*}
\hat R(z)&=\sum\limits_{i=1}^{2n}E_{ii}\otimes E_{ii}+\frac{rs(z-1)}{rz-s}\Bigl\{\sum\limits_{1\le i\le n-1\atop i+1\le j\le n}E_{jj}\otimes E_{ii}+\sum\limits_{1\le i\le n-1\atop i^{\prime}+1\le j\le 2n}E_{jj}\otimes E_{ii}\\
&+\sum\limits_{j=n+2}^{2n}E_{jj}\otimes E_{nn}+\sum\limits_{n+1\le i\le 2n-1\atop i+1\le j\le 2n}E_{ii}\otimes E_{jj}+\sum\limits_{1\le i\le n-1\atop n+1\le j\le 2n-i}E_{ii}\otimes E_{jj}\Bigr\}\\
&+\frac{z-1}{rz-s}\Bigl\{\sum\limits_{1\le i\le n-1\atop i+1\le j\le n}E_{ii}\otimes E_{jj}+\sum\limits_{1\le i\le n-1\atop i^{\prime}+1\le j\le 2n}E_{ii}\otimes E_{jj}+\sum\limits_{j=n+2}^{2n}E_{nn}\otimes E_{jj}\\
&+\sum\limits_{n+1\le i\le 2n-1\atop i+1\le j\le 2n}E_{jj}{\otimes} E_{ii}+\sum\limits_{1\le i\le n-1\atop n+1\le j\le 2n-i}E_{jj}{\otimes} E_{ii}\Bigr\}+\frac{r{-}s}{rz{-}s}\bigl\{z\sum_{\substack{i<j\\i^{\prime}\neq j}}E_{ij}{\otimes} E_{ji}\\
&+\sum_{\substack{i>j\\i\neq j^{\prime}}} E_{ij}\otimes E_{ji}\Bigr\}+\sum\limits_{i,j=1}^{2n}c_{ij}(z)E_{i^{\prime}j^{\prime}}\otimes E_{ij},
\end{flalign*}
where $$c_{ij}(z)=\frac{d_{ij}(z)}{(z-r^{1-n}s^{n-1})(rz-s)}$$.
It is easy to check that $\hat R(z)$ satisfies the quantum Yang-Baxter equation:
\begin{flalign*}
\hat R_{12}(z)\hat R_{13}(zw)\hat R_{23}(w)=\hat R_{23}(w)\hat R_{13}(zw)\hat R_{12}(z),
\end{flalign*}
and the unitary condition:
\begin{align}
 \hat R_{21}(z)\hat R(z^{-1})=\hat R(z^{-1})\hat R_{21}(z)=1.
\end{align}
\end{remark}

\section{The algebra $\mathcal{U}(\hat R)$ and its Gauss decomposition}
\begin{defi}
\rm The algebra $\mathcal{U}(\hat R)$ is an associative algebra with generators $l^{\pm}_{kl}[\mp m]$ $(m\in\mathbb{Z}_+\setminus \{\,0\,\})$, and $l^+_{kl}[0]$= $l^-_{lk}[0]$=0, $1\le l\le k\le n$ and the central element $c$ via $r^{\frac{c}{2}}$ or $s^{\frac{c}{2}}$. Let $l^{\pm}_{ij}(z)=\sum\limits_{m=0}^{\infty}l^{\pm}_{ij}[\mp m]z^{\pm m}$, and $L^{\pm}(z)=\sum\limits_{i,j=1}^{n}E_{ij}\otimes l^{\pm}_{ij}(z)$. Then the relations are given by the following matrix equations on End$(V^{\otimes 2})\otimes \mathcal{U}(\hat R)$:
\begin{align}
&l^+_{ii}[0],\ l^-_{ii}[0]\ \text{are invertible and}\ l^+_{ii}[0]\,l^-_{ii}[0]=l^-_{ii}[0]\, l^+_{ii}[0],\\
&\qquad\hat R(\frac{z}{w})L^{\pm}_{1}(z)L^{\pm}_{2}(w)=L^{\pm}_{2}(w)L^{\pm}_{1}(z)\hat R(\frac{z}{w}),\\
&\qquad\hat R(\frac{z_+}{w_-})L^{+}_{1}(z)L^{-}_{2}(w)=L^{-}_{2}(w)L^{+}_{1}(z)\hat R(\frac{z_-}{w_+}),
\end{align}
where $z_+$ = $zr^{\frac{c}{2}}$ and $z_-$ = $zs^{\frac{c}{2}}$. Here (5.2) is expanded in the direction of either $\frac{z}{w}$ or $\frac{w}{z}$, and (5.3) is expanded in the direction of $\frac{z}{w}$.
\end{defi}
\begin{remark}
\rm From Eq (5.3) and the unitary condition of $\hat R$-matrix (4.13), we have
\begin{align}
\hat R(\frac{z_\pm}{w_\mp})L^{\pm}_{1}(z)L^{\mp}_{2}(w)=L^{\mp}_{2}(w)L^{\pm}_{1}(z)\hat R(\frac{z_\mp}{w_\pm}).
\end{align}
So the relations of generating series (5.2), (5.3) are equivalent to
\begin{align}
&L^{\pm}_{1}(z)^{-1}L^{\pm}_{2}(w)^{-1}\hat R(\frac{z}{w})=\hat R(\frac{z}{w})L^{\pm}_{2}(w)^{-1}L^{\pm}_{1}(z)^{-1},\\
&L^{\pm}_{1}(z)^{-1}L^{\mp}_{2}(w)^{-1}\hat R(\frac{z_\pm}{w_\mp})=\hat R(\frac{z_\mp}{w_\pm})L^{\mp}_{2}(w)^{-1}L^{\pm}_{1}(z)^{-1}.
\end{align}
They are also equivalent to
\begin{align}
&L^{\pm}_{2}(w)^{-1}\hat R(\frac{z}{w})L^{\pm}_{1}(z)=L^{\pm}_{1}(z)\hat R(\frac{z}{w})L^{\pm}_{2}(w)^{-1},\\
&L^{\mp}_{2}(w)^{-1}\hat R(\frac{z_\pm}{w_\mp})L^{\pm}_{1}(z)=L^{\pm}_{1}(z)\hat R(\frac{z_\mp}{w_\pm})L^{\mp}_{2}(w)^{-1}.
\end{align}
\end{remark}
\begin{remark}
\rm Here we present the specific matrix expression formulas for (5.2) and (5.3), and reveal the differences between type $D^{(1)}_n$ and type $A^{(1)}_n$. For  $D^{(1)}_n$, write
\begin{align*}
L^{\pm}(z)=\left(\begin{array}{cccc}l^{\pm}_{11}(z)&l^{\pm}_{12}(z)&\cdots&l^{\pm}_{1,2n}(z)\\l^{\pm}_{21}(z)&l^{\pm}_{22}(z)&\ddots&\vdots \\ \vdots&\ddots&\ddots&l^{\pm}_{2n-1,2n}(z)\\ l^{\pm}_{2n,1}(z)&\cdots&l^{\pm}_{2n,2n-1}(z)&l^{\pm}_{2n,2n}(z) \end{array}\right)_{2n\times2n},
\end{align*}
then for the generators $L^{\pm}_1(z)$, $L^{\pm}_2(z)$, $\hat{R}(z)$, we have that
\begin{align*}
L^{\pm}_1(z)=\left(\begin{array}{ccc}l^{\pm}_{11}(z)I_{2n}&\cdots&l^{\pm}_{1,2n}(z)I_{2n}\\ \vdots&\cdots&\vdots \\l^{\pm}_{2n,1}(z)I_{2n}&\cdots&l^{\pm}_{2n,2n}(z)I_{2n} \end{array}\right)_{4n^2\times4n^2},
\end{align*}
\begin{align*}
L^{\pm}_2(z)=\left(\begin{array}{cccc}L^{\pm}(z)&0&\cdots&0\\0&L^{\pm}(z)&\ddots&\vdots \\ \vdots&\ddots&\ddots&0\\ 0&\cdots&0&L^{\pm}(z) \end{array}\right)_{4n^2\times4n^2},
\end{align*}
\begin{align*}
\hat{R}(z)=\left(\begin{array}{ccc} B_{11}(z)&\cdots&B_{1,2n}(z)\\ \vdots &\cdots&\vdots \\ B_{2n,1}(z)&\cdots&B_{2n,2n}(z) \end{array}\right)_{4n^2\times4n^2},
\end{align*}
\begin{align*}
B_{ll}(z)=\left(\begin{array}{cccc} a_{l1}(z)&0&\cdots&0\\0&a_{l2}(z)&\ddots&\vdots \\ \vdots&\ddots&\ddots&0\\ 0&\cdots&0&a_{l,2n}(z) \end{array}\right)_{2n\times2n},
\end{align*}
where $\bullet$ $B_{ll}(z)$ is a diagonal matrix, and $a_{lj}$ is the coefficient of element $E_{ll}\otimes E_{jj}$ in $\hat{R}(z)$.\\
$\bullet$ $B_{ij}(z)=b_{ij}(z)E_{ji}+c_{i^{\prime}j^{\prime}}(z)E_{i^{\prime}j^{\prime}}$ , where $b_{ij}(z)$ is the coefficient of element $E_{ij}\otimes E_{ji}$ in $\hat{R}(z)$, and $c_{ij}(z)$ is the coefficient of element $E_{i^{\prime}j^{\prime}}\otimes E_{ij}$ in $\hat{R}(z)$.

The multiplication between matrices $\hat{R}(\frac{z}{w})$, $L^{\pm}_1(z)$, $L^{\pm}_2(w)$ is matrix multiplication. From  equation $$\hat{R}(\frac{z}{w})L^{\pm}_1(z)L^{\pm}_2(w)=L^{\pm}_2(w)L^{\pm}_1(z)\hat{R}(\frac{z}{w}),$$ we can derive the following calculation:
\begin{align*}
\hat{R}(\frac{z}{w})L^{\pm}_1(z)L^{\pm}_2(w)=\left(\begin{array}{ccc}M_{11}&\cdots&M_{1,2n}\\ \vdots&\cdots&\vdots \\M_{2n,1}&\cdots&M_{2n,2n} \end{array}\right)_{4n^2\times4n^2},\quad M_{ij}\in M(2n,\mathbb{K}),
\end{align*}
\begin{align*}
L^{\pm}_2(w)L^{\pm}_1(z)\hat{R}(\frac{z}{w})=\left(\begin{array}{ccc}M^{\prime}_{11}&\cdots&M^{\prime}_{1,2n}\\ \vdots&\cdots&\vdots \\M^{\prime}_{2n,1}&\cdots&M^{\prime}_{2n,2n} \end{array}\right)_{4n^2\times4n^2},\quad M^{\prime}_{ij}\in M(2n,\mathbb{K}).
\end{align*}
We only give two types of matrix expressions that will be used later. Taking $M_{ij}=M^{\prime}_{ij}$, where $1\le i,j\le n$, consider $M_{ij}$
\begin{equation}
\begin{array} {cc}
\left(\begin{array}{ccccccc}
a_{i1}(\frac{z}{w})l^{\pm}_{ij}(z) &\quad   &b_{i1}(\frac{z}{w})l^{\pm}_{1j}(z) &\quad &\quad &\quad \\
\quad &\ddots  &\vdots                 &\quad    &\quad\\
\quad &\quad   &a_{ii}(\frac{z}{w})l^{\pm}_{ij}(z)  &\quad    &\quad\\
\quad  &\quad   &\vdots                 &\ddots    &\quad\\
c_{i^{\prime}1}(\frac{z}{w})l^{\pm}_{1^{\prime}j}(z) &\cdots &c_{i^{\prime}i}(\frac{z}{w})l^{\pm}_{i^{\prime}j}(z)  &\cdots   & c_{i^{\prime}i^{\prime}}(\frac{z}{w})l^{\pm}_{ij}(z) &\cdots &c_{i^{\prime}1^{\prime}}(\frac{z}{w})l^{\pm}_{1j}(z)\\
\quad &\quad &\vdots &\quad &\quad &\ddots\\
\quad &\quad &b_{i1^{\prime}}(\frac{z}{w})l^{\pm}_{1^{\prime}j}(z) &\quad &\quad&\quad &a_{i1^{\prime}}(\frac{z}{w})l^{\pm}_{ij}(z)
         \end{array}\right)
\end{array}L^{\pm}(w),
\end{equation}
and $1\le i\le n, 1+n\le j$, $M_{ij}$
\begin{equation}
\begin{array} {cc}
\left(\begin{array}{ccccccc}
a_{i1}(\frac{z}{w})l^{\pm}_{ij}(z) &\quad   &\quad &\quad &b_{i1}(\frac{z}{w})l^{\pm}_{1j}(z) &\quad \\
\quad &\ddots  &\quad                 &\quad    &\vdots &\quad\\
c_{i^{\prime}1}(\frac{z}{w})l^{\pm}_{1^{\prime}j}(z) &\cdots &c_{i^{\prime}i}(\frac{z}{w})l^{\pm}_{ij}(z) &\cdots &c_{i^{\prime}i^{\prime}}(\frac{z}{w})l^{\pm}_{i^{\prime}j}(z) &\cdots &c_{i^{\prime}1^{\prime}}(\frac{z}{w})l^{\pm}_{1j}(z)\\
\quad  &\quad   &\quad                &\ddots    &\vdots &\quad\\
\quad &\quad &\quad  &\quad   &a_{ii}(\frac{z}{w})l^{\pm}_{ij}(z) &\quad &\quad\\
\quad &\quad &\quad &\quad &\vdots &\ddots\\
\quad &\quad &\quad &\quad &b_{i1^{\prime}}(\frac{z}{w})l^{\pm}_{1^{\prime}j}(z) &\quad &a_{i1^{\prime}}(\frac{z}{w})l^{\pm}_{ij}(z)
         \end{array}\right)
\end{array}L^{\pm}(w),
\end{equation}
where the elements in the $i^{\prime}$-th row except for the element at position $(i^{\prime}, i^{\prime})$ are all zero for type $A_n^{(1)}$. Consider $M^{\prime}_{ij}$, for $1\le i,j\le n$
\begin{equation}
L^{\pm}(w)
\begin{array} {cc}
\left(\begin{array}{ccccccc}
a_{j1}(\frac{z}{w})l^{\pm}_{ij}(z)  &\quad &\quad &\quad  &c_{1j^{\prime}}(\frac{z}{w})l^{\pm}_{i1^{\prime}}(z) &\quad &\quad\\
\quad &\ddots &\quad &\quad &\vdots &\quad &\quad\\
b_{1j}(\frac{z}{w})l^{\pm}_{i1}(z) &\cdots &a_{jj}(\frac{z}{w})l^{\pm}_{ij}(z) &\cdots &c_{jj^{\prime}}(\frac{z}{w})l^{\pm}_{ij^{\prime}}(z) &\cdots &b_{1^{\prime}j}(\frac{z}{w})l^{\pm}_{i1^{\prime}}(z)\\
\quad &\quad &\quad &\ddots &\vdots &\quad   &\quad\\
\quad &\quad &\quad &\quad  &c_{j^{\prime}j^{\prime}}(\frac{z}{w})l^{\pm}_{ij}(z)  &\quad  &\quad\\
\quad &\quad &\quad &\quad  &\vdots &\ddots &\quad\\
\quad &\quad &\quad &\quad  &c_{1^{\prime}j^{\prime}}(\frac{z}{w})l^{\pm}_{i1}(z) &\quad &a_{j1^{\prime}}(\frac{z}{w})l^{\pm}_{ij}(z)
         \end{array}\right)
\end{array},
\end{equation}
moreover, $1\le i\le n, 1+n\le j$,
\begin{equation}
L^{\pm}(w)
\begin{array} {cc}
\left(\begin{array}{ccccccc}
a_{j1}(\frac{z}{w})l^{\pm}_{ij}(z) &\quad &c_{1j^{\prime}}(\frac{z}{w})l^{\pm}_{i1^{\prime}}(z) &\quad &\quad &\quad &\quad\\
\quad &\ddots &\vdots &\quad &\quad &\quad &\quad\\
\quad &\quad &c_{jj^{\prime}}(\frac{z}{w})l^{\pm}_{ij}(z) &\quad &\quad &\quad &\quad\\
\quad &\quad &\vdots &\ddots &\quad &\quad   &\quad\\
b_{1j}(\frac{z}{w})l^{\pm}_{i1}(z)&\cdots &c_{j^{\prime}j^{\prime}}(\frac{z}{w})l^{\pm}_{ij^{\prime}}(z) &\cdots &a_{jj}(\frac{z}{w})l^{\pm}_{ij}(z) &\cdots &b_{1^{\prime}j}(\frac{z}{w})l^{\pm}_{i1^{\prime}}(z)\\
\quad &\quad &\vdots &\quad  &\quad &\ddots &\quad\\
\quad &\quad &c_{1^{\prime}j^{\prime}}(\frac{z}{w})l^{\pm}_{i1}(z) &\quad  &\quad &\quad &a_{j1^{\prime}}(\frac{z}{w})l^{\pm}_{ij}(z)
         \end{array}\right)
\end{array},
\end{equation}
where the elements in the $j^{\prime}$-th column except for the element at position $(j^{\prime}, j^{\prime})$ are all zero for type $A_n^{(1)}$.
\end{remark}
\begin{defi}
\rm Let $X$ = $(x_{ij})^{n}_{i,j=1}$ be a sequence matrix over a ring with identity. Denote by $X^{ij}$ the submatrix obtained from $X$ by deleting the $i$-th row and $j$-th column. Suppose that the matrix $X^{ij}$ is invertible. The $(i,j)$-th quasi-determinant $|X|_{ij}$ of $X$ is defined by
\end{defi}
\begin{equation}
|X|_{ij}=\left|\begin{array}{rrrrr}x_{11}&\cdots&x_{1j}&\cdots&x_{1n}\\ \qquad &\cdots&\qquad &\cdots&\qquad \\x_{i1}&\cdots&\boxed{x_{ij}}&\cdots&x_{in}\\\qquad &\cdots&\qquad &\cdots&\qquad \\x_{n1}&\cdots&x_{nj}&\cdots&x_{nn}\end{array}\right|=x_{ij}-r^j_i(X^{ij})^{-1}c^i_j,
\end{equation}
where $r^j_i$ is the row matrix obtained from the $i$-th row of $X$ by deleting the element $x_{ij}$, and $c^i_j$ is the column matrix obtained from the $j$-th column of $X$ by deleting the element $x_{ij}$.
\begin{proposition}
$L^{\pm}(z)$ have the following unique decomposition
\begin{align}
&L^{\pm}(z)=F^{\pm}(z)K^{\pm}(z)E^{\pm}(z),
\end{align}
by applying the Gauss decomposition to $L^{\pm}(z)$, where we introduce matrices with $N\times N$, and $N=2n$,
\begin{align}
F^{\pm}(z)=\left(\begin{array}{cccc}1\quad&\quad&\quad&0\\
	f^{\pm}_{21}(z)&\ddots \\ \vdots&\ddots&\ddots\\ f^{\pm}_{N1}(z) &\cdots&f^{\pm}_{N,N-1}(z)&1 \end{array}\right),
\end{align}
\begin{align}
E^{\pm}(z)=\left(\begin{array}{cccc}1&e^{\pm}_{12}(z)&\cdots&e^{\pm}_{1N}(z)\\ \quad&\ddots&\ddots&\vdots\\ \quad&\quad&\ddots&e^{\pm}_{N-1,N}(z)\\ 0&\quad&\quad&1\end{array}\right),
\end{align}
and
\begin{align}
K^{\pm}(z)=\left(\begin{array}{cccc}k^{\pm}_1(z)&\quad&\quad&0\\ \quad &\ddots&\quad&\quad \\ \quad&\quad&\ddots\quad\\ 0&\quad&\quad&k^{\pm}_N(z)\end{array}\right).
\end{align}
Their entries are found by the quasi-determinant formulas
\begin{equation}
k^{\pm}_{m}(z)=\left|\begin{array}{cccc}l^{\pm}_{11}(z)&\cdots&l^{\pm}_{1,m-1}(z)&l^{\pm}_{1m}(z)\\ \vdots &\quad&\vdots &\vdots \\l^{\pm}_{m1}(z)&\cdots&l^{\pm}_{m,m-1}(z)&\boxed{l^{\pm}_{mm}(z)}\end{array}\right|
\end{equation}
for $1\le m\le 2n$, $k^{\pm}_{m}(z)=\sum\limits_{t\in\mathbb{Z}_+}k^{\pm}_{m}(\mp t)z^{\pm t}$.
\begin{equation}
e^{\pm}_{ij}(z)=k^{\pm}_i(z)^{-1}\left|\begin{array}{cccc}l^{\pm}_{11}(z)&\cdots&l^{\pm}_{1,i-1}(z)&l^{\pm}_{1j}(z)\\ \vdots &\quad&\vdots &\vdots \\l^{\pm}_{i1}(z)&\cdots&l^{\pm}_{i,i-1}(z)&\boxed{l^{\pm}_{ij}(z)}\end{array}\right|
\end{equation}
for $1\le i<j\le 2n$, $e^{\pm}_{ij}(z)=\sum\limits_{m\in\mathbb{Z}_+}e^{\pm}_{ij}(\mp m)z^{\pm m}$.
\begin{equation}
f^{\pm}_{ji}(z)=\left|\begin{array}{cccc}l^{\pm}_{11}(z)&\cdots&l^{\pm}_{1,i-1}(z)&l^{\pm}_{1i}(z)\\ \vdots &\quad&\vdots &\vdots \\l^{\pm}_{j1}(z)&\cdots&l^{\pm}_{j,i-1}(z)&\boxed{l^{\pm}_{ji}(z)}\end{array}\right|
k^{\pm}_i(z)^{-1}
\end{equation}
for $1\le i<j\le 2n$, $f^{\pm}_{ji}(z)=\sum\limits_{m\in\mathbb{Z}_+}f^{\pm}_{ji}(\mp m)z^{\pm m}$.
\end{proposition}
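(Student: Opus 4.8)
The plan is to recognize this as the standard Gauss factorization of a matrix whose leading principal minors are invertible, performed over the appropriate ring of formal series, and then to quote the quasi-determinant formalism for the explicit shape of the factors. First I would fix the coefficient ring: since $l^\pm_{ij}(z)=\sum_{m\ge 0}l^\pm_{ij}[\mp m]z^{\pm m}$, the matrix $L^+(z)$ has entries in $\mathcal U(\hat R)[[z]]$ and $L^-(z)$ has entries in $\mathcal U(\hat R)[[z^{-1}]]$. The relations imposed in Definition 5.1 on the constant terms --- the vanishing of $l^\pm_{ij}[0]$ on the ``wrong'' side of the diagonal together with the invertibility of the $l^\pm_{ii}[0]$ --- say exactly that the specializations $L^+(z)\big|_{z=0}=\bigl(l^+_{ij}[0]\bigr)$ and $L^-(z)\big|_{z^{-1}=0}=\bigl(l^-_{ij}[0]\bigr)$ are, respectively, an upper- and a lower-triangular matrix over $\mathcal U(\hat R)$ with invertible diagonal.

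Next I would show that every leading principal submatrix $L^\pm(z)^{[m]}$ ($1\le m\le N$, $N=2n$, with $L^\pm(z)^{[N]}=L^\pm(z)$) is invertible over $\mathcal U(\hat R)[[z^{\pm 1}]]$. Reducing modulo $z$ (respectively $z^{-1}$) sends $L^\pm(z)^{[m]}$ to the top-left $m\times m$ block of the triangular constant-term matrix above, which is triangular with invertible diagonal and hence invertible over $\mathcal U(\hat R)$; and a square matrix over $\mathcal U(\hat R)[[z^{\pm1}]]$ is invertible as soon as its constant term is, by the usual geometric-series argument in the $z^{\pm1}$-adic topology. It follows that all quasi-determinants appearing in the statement are well-defined, that each $k^\pm_m(z)=|L^\pm(z)^{[m]}|_{mm}$ is invertible (being a quasi-determinant of an invertible block relative to a boxed entry), and hence that $e^\pm_{ij}(z)$ and $f^\pm_{ji}(z)$ lie in $\mathcal U(\hat R)[[z^{\pm1}]]$.

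Then I would invoke the general existence-and-uniqueness theorem for Gauss decompositions in terms of quasi-determinants, as used in the one-parameter setting by Ding--Frenkel \cite{DF} and Jing--Liu--Molev \cite{JLM, JLM1}: a square matrix $X$ over a unital ring all of whose leading principal submatrices are invertible factors uniquely as $X=FKE$ with $F$ lower unitriangular, $E$ upper unitriangular and $K$ diagonal; moreover $K=\mathrm{diag}(|X^{[1]}|_{11},\dots,|X^{[N]}|_{NN})$ and the off-diagonal entries of $E$ and $F$ are exactly the quasi-determinants written in the statement. Uniqueness is the classical argument: from $F_1K_1E_1=F_2K_2E_2$ one gets $F_2^{-1}F_1K_1=K_2E_2E_1^{-1}$, which is simultaneously lower- and upper-triangular, hence diagonal, forcing $K_1=K_2$ and then $F_1=F_2$, $E_1=E_2$. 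Applying this to $X=L^\pm(z)$ over $\mathcal U(\hat R)[[z^{\pm1}]]$ gives the decomposition $L^\pm(z)=F^\pm(z)K^\pm(z)E^\pm(z)$ together with the stated formulas for $k^\pm_m(z)$, $e^\pm_{ij}(z)$, $f^\pm_{ji}(z)$; and since $F^\pm(z)^{\pm1}$, $K^\pm(z)^{\pm1}$, $E^\pm(z)^{\pm1}$ all have entries in $\mathcal U(\hat R)[[z^{\pm1}]]$, each Gaussian generator expands as $k^\pm_m(z)=\sum_{t\in\mathbb Z_+}k^\pm_m(\mp t)z^{\pm t}$, and similarly for $e^\pm_{ij}(z)$ and $f^\pm_{ji}(z)$, as claimed.

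The only step requiring genuine attention --- and hence the one I expect to be the main obstacle --- is the invertibility of the leading principal submatrices $L^\pm(z)^{[m]}$ in the completed ring; once that is secured, the rest is the formal quasi-determinant machinery. Since that invertibility rests solely on the triangularity of the constant terms and the invertibility of the $l^\pm_{ii}[0]$, both of which are part of the definition of $\mathcal U(\hat R)$, no input beyond the defining relations (5.1)--(5.3) is needed.
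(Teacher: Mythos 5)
Your proposal is correct, and it is essentially the argument the paper relies on: the paper states this proposition without any proof, treating the Gauss decomposition and the quasi-determinant formulas for its factors as standard (following Ding--Frenkel and Jing--Liu--Molev, with Definition 5.4 supplying the quasi-determinant notation). Your write-up supplies exactly the missing justification in the canonical way: the constant-term triangularity and invertibility of $l^{\pm}_{ii}[0]$ from Definition 5.1 give invertibility of all leading principal submatrices over $\mathcal U(\hat R)[[z^{\pm1}]]$ by the geometric-series argument, after which existence, uniqueness, and the explicit quasi-determinant expressions for $k^{\pm}_m(z)$, $e^{\pm}_{ij}(z)$, $f^{\pm}_{ji}(z)$ follow from the general noncommutative $FKE$ factorization theorem. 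No gap; this is the same route the cited references take in the one-parameter setting.
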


\section{Drinfeld realization of $U_{r,s}\mathcal(\widehat {\mathfrak{so}_{2n}})$}
In this section, we will study the commutation relations between the Gaussian generators and give the Drinfeld realization of
$U_{r,s}(\widehat{\mathfrak{so}_{2n}})$.
\begin{theorem}
In the algebra $\mathcal{U}(\hat R)$, we have
\begin{equation*}
\begin{aligned}
X^{+}_i(z)&=e^{+}_{i,i+1}(z_+){-}e^{-}_{i,i+1}(z_-),\ &X^{+}_n(z)&=e^{+}_{n-1,n+1}(z_+){-}e^{-}_{n-1,n+1}(z_-),\\
X^{-}_i(z)&=f^{+}_{i+1,i}(z_-)-f^{-}_{i+1,i}(z_+),\ &X^{-}_n(z)&=f^{+}_{n+1,n-1}(z_-)-f^{-}_{n+1,n-1}(z_+).
\end{aligned}
\end{equation*}
For the generators $\bigl\{k^{\pm}_i(z), X^{\pm}_j(z), k^{\pm}_{n+1}(z), X^{\pm}_n(z)\mid 1\le i\le n, 1\le j\le n-1\bigr\}$, the relations between $k^{\pm}_i(z)$ and $X^{\pm}_j(z)$ are the same as those in $U_{r,s}(\widehat{\mathfrak{gl}_n})$. The other relations are
those involving $k^{\pm}_i(z)$ and $k^{\pm}_{n+1}(w)$, that is,
\begin{align*}
k^{\pm}_{i}(z)k^{\pm}_{n+1}(w)&=k^{\pm}_{n+1}(w)k^{\pm}_{i}(z),\\
k^{+}_{n+1}(z)k^{-}_{n+1}(w)&=k^{-}_{n+1}(w)k^{+}_{n+1}(z),\\
k^{\mp}_{i}(z)k^{\pm}_{n+1}(w)\frac{z_{\mp}-w_{\pm}}{rz_{\mp}-sw_{\pm}}&=\frac{z_{\pm}-w_{\mp}}{rz_{\pm}-sw_{\mp}}k^{\pm}_{n+1}(w)k^{\mp}_{i}(z).
\end{align*}
The relations involving $k^{\pm}_t(w)\ (1\le t\le n+1)$ and $X^{\pm}_{n}(z)$ are
\begin{align*}
k^{\pm}_l(w)X^{\pm}_{n}(z)&=rsX^{\pm}_{n}(z)k^{\pm}_l(w),\\
rsk^{\pm}_l(w)X^{\mp}_{n}(z)&=X^{\mp}_{n}(z)k^{\pm}_l(w),\quad 1\le l\le n-2,\\
(rw-sz_{\pm})k^{\pm}_{n-1}(w)X^{+}_n(z)&=rs(w-z_{\pm})X^{+}_n(z)k^{\pm}_{n-1}(w),\\
rs(w-z_{\mp})k^{\pm}_{n-1}(w)X^{-}_n(z)&=(rw-sz_{\mp})X^{-}_n(z)k^{\pm}_{n-1}(w),\\
k^{\pm}_n(w)X^{+}_n(z)&=\frac{wr-sz_{\pm}}{w-z_{\pm}}X^{+}_n(z)k^{\pm}_n(w),\\
X^{-}_n(z)k^{\pm}_n(w)&=\frac{wr-sz_{\mp}}{w-z_{\mp}}k^{\pm}_n(w)X^{-}_n(z),\\
k^{\pm}_{n+1}(w)X^{+}_n(z)&=\frac{rs(w-z_{\pm})}{sw-rz_{\pm}}X^{+}_n(z)k^{\pm}_{n+1}(w),\\
X^{-}_n(z)k^{\pm}_{n+1}(w)&=\frac{rs(w-z_{\mp})}{sw-rz_{\mp}}k^{\pm}_{n+1}(w)X^{-}_n(z).
\end{align*}
The relations involving $k^{\pm}_{n+1}(z)$ and $X^{\pm}_{t}(w)$ $(1\le t\le n-1)$ are
\begin{align*}
k^{\pm}_{n+1}(z)X^{\mp}_l(w)&=X^{\mp}_l(w)k^{\pm}_{n+1}(z),\\
k^{\pm}_{n+1}(z)X^{\pm}_l(w)&=X^{\pm}_l(w)k^{\pm}_{n+1}(z),\quad 1\le l\le n-2,\\
k^{\pm}_{n+1}(w)X^{+}_{n-1}(z)&=\frac{rs(z_{\pm}-w)}{z_{\pm}s-rw}X^{+}_{n-1}(z)k^{\pm}_{n+1}(w),\\
X^{-}_{n-1}(z)k^{\pm}_{n+1}(w)&=\frac{rs(z_{\mp}-w)}{z_{\mp}s-rw}k^{\pm}_{n+1}(w)X^{-}_{n-1}(z).
\end{align*}
For the relations involving $X^{\pm}_{n}(z)$ and $X^{\pm}_{t}(z) \ (1\le t\le n)$, we have
\begin{align*}
X^{\pm}_n(w)X^{\pm}_l(z)&=X^{\pm}_l(z)X^{\pm}_n(w),\\
X^{\pm}_n(w)X^{\mp}_l(z)&=X^{\mp}_l(z)X^{\pm}_n(w),\quad 1\le l\le n-3,\\
X^{+}_n(w)X^{+}_{n-2}(z)&=\frac{rz-sw}{z-w}X^{+}_{n-2}(z)X^{+}_n(w),\\
X^{-}_n(w)X^{-}_{n-2}(z)&=\frac{z-w}{rz-sw}X^{-}_{n-2}(z)X^{-}_n(w),\\
X^{\pm}_n(w)X^{\mp}_{n-2}(z)&=X^{\mp}_{n-2}(z)X^{\pm}_n(w),\\
X^{\pm}_{n}(w)X^{\pm}_{n-1}(z)&=(rs)^{\pm 1}X^{\pm}_{n-1}(z)X^{\pm}_n(w),\\
X^{\pm}_{n-1}(w)X^{\mp}_n(z)&=X^{\mp}_{n}(z)X^{\pm}_{n-1}(w),\\
X^{+}_n(z)X^{+}_n(w)&=\frac{zr-ws}{zs-rw}X^{+}_n(w)X^{+}_n(z),\\
X^{-}_n(z)X^{-}_n(w)&=\frac{zs-wr}{zr-sw}X^{-}_n(w)X^{-}_n(z),
\\
\bigl[\,X^{+}_n(z),X^{-}_t(w)\,\bigr]&=(s^{-1}-r^{-1})\delta_{nt}\Bigl\{\delta\Bigl(\frac{z_{-}}{w_+}\Bigr)k^{-}_{n+1}(w_+)k^{-}_n(w_+)^{-1}\\
&\qquad\qquad\qquad\qquad-\delta\Bigl(\frac{z_{+}}{w_-}\Bigr)k^{+}_{n+1}(z_+)k^{+}_n(z_+)^{-1}\Bigr\},
\end{align*}
and the following $(r,s)$-Serre relations hold in $\mathcal{U}(\hat R)$:
\begin{align}
&\Bigl\{X^{-}_{n-2}(z_1)X^{-}_{n-2}(z_2)X^{-}_{n}(w)-(r+s)X^{-}_{n-2}(z_1)X^{-}_{n}(w)X^{-}_{n-2}(z_2)\nonumber\\
&\qquad+rsX^{-}_{n}(w)X^{-}_{n-2}(z_1)X^{-}_{n-2}(z_2)\Bigr\}+\Bigl\{z_1 \leftrightarrow z_2\Bigr\}=0,
\end{align}
\begin{align}
&\Bigl\{X^{+}_{n}(z_1)X^{+}_{n}(z_2)X^{+}_{n-2}(w)-(r+s)X^{+}_{n}(z_1)X^{+}_{n-2}(w)X^{+}_{n}(z_2)\nonumber\\
&\quad+rsX^{+}_{n-2}(w)X^{+}_{n}(z_1)X^{+}_{n}(z_2)\Bigr\}+\Bigl\{z_1 \leftrightarrow z_2\Bigr\}=0,\\
&\Bigl\{rsX^{+}_{n-2}(z_1)X^{+}_{n-2}(z_2)X^{+}_{n}(w)-(r+s)X^{+}_{n-2}(z_1)X^{+}_{n}(w)X^{+}_{n-2}(z_2)\nonumber\\
&\quad+X^{+}_{n}(w)X^{+}_{n-2}(z_1)X^{+}_{n-2}(z_2)\Bigr\}+\Bigl\{z_1 \leftrightarrow z_2\Bigr\}=0,\\
&\Bigl\{rsX^{-}_{n}(z_1)X^{-}_{n}(z_2)X^{-}_{n-2}(w)-(r+s)X^{-}_{n}(z_1)X^{-}_{n-2}(w)X^{-}_{n}(z_2)\nonumber\\
&\quad+X^{-}_{n-2}(w)X^{-}_{n}(z_1)X^{-}_{n}(z_2)\Bigr\}+\Bigl\{z_1 \leftrightarrow z_2\Bigr\}=0,
\end{align}
where the formal delta function $\delta(z)=\sum\limits_{n\in\mathbb{Z}}z^{n}$.
\begin{proof}
The proof is based on the induction on $n$. We consider first the case $n=4$,
\begin{align*}
L^{\pm}(z)=\left(\begin{array}{cccc}l^{\pm}_{11}(z)&l^{\pm}_{12}(z)&\cdots&l^{\pm}_{18}(z)\\l^{\pm}_{21}(z)&l^{\pm}_{22}(z)&\ddots&\vdots \\ \vdots&\ddots&\ddots&l^{\pm}_{78}(z)\\ l^{\pm}_{81}(z)&\cdots&l^{\pm}_{87}(z)&l^{\pm}_{88}(z) \end{array}\right)_{8\times8}.
\end{align*}
Observe (5.9) and (5.11) and restrict them to $E_{ij}\otimes E_{kl}$, $1\le i,j,k,l\le 4$, then
\begin{align*}
\hat{R}_1(\frac{z}{w}){\widetilde{L}}^{\pm}_1(z){\widetilde{L}}^{\pm}_2(w)
&={\widetilde{L}}^{\pm}_2(w){\widetilde{L}}^{\pm}_1(z)\hat{R}_1(\frac{z}{w}),\\
\hat{R}_1(\frac{z_+}{w_-}){\widetilde{L}}^{\pm}_1(z){\widetilde{L}}^{\pm}_2(w)
&={\widetilde{L}}^{\pm}_2(w){\widetilde{L}}^{\pm}_1(z)\hat{R}_1(\frac{z_-}{w_+}),
\end{align*}
\begin{align*}
\widetilde{L}^{\pm}(z)=\left(\begin{array}{cccc}l^{\pm}_{11}(z)&l^{\pm}_{12}(z)&l^{\pm}_{13}(z)&l^{\pm}_{14}(z)\\l^{\pm}_{21}(z)&l^{\pm}_{22}(z)&l^{\pm}_{23}(z)&l^{\pm}_{24}(z) \\ l^{\pm}_{31}(z)&l^{\pm}_{32}(z)&l^{\pm}_{33}(z)&l^{\pm}_{34}(z)\\ l^{\pm}_{41}(z)&l^{\pm}_{42}(z)&l^{\pm}_{43}(z)&l^{\pm}_{44}(z) \end{array}\right)_{4\times4},
\end{align*}
and
\begin{align*}
\hat{R}_1(\frac{z}{w})=&\sum\limits_{i=1}^{4}E_{ii}\otimes E_{ii}+\frac{s(w-z)}{sw-zr}\Bigl(\sum\limits_{i>j}E_{ii}\otimes E_{jj}+s^{-1}\sum\limits_{i<j}E_{ii}\otimes E_{jj}\Bigr)\\
&+\frac{(s-r)w}{sw-zr}\Bigl(\sum\limits_{i>j}E_{ij}\otimes E_{ji}+\frac{z}{w}\sum\limits_{i<j}E_{ij}\otimes E_{ji}\Bigr).
\end{align*}
Jing and Liu \cite{JL} gave the following spectral parameter dependent $\hat{R}_{A}(\frac{z}{w})$ of $U_{r,s}(\widehat{\mathfrak{gl}}_n)$, in particular, setting $n=4$,
\begin{align*}
\hat{R}_{A}(\frac{z}{w})=&\sum\limits_{i=1}^{4}E_{ii}\otimes E_{ii}+\frac{w-z}{w-zrs^{-1}}\Bigl(\sum\limits_{i>j}E_{ii}\otimes E_{jj}+s^{-1}\sum\limits_{i<j}E_{ii}\otimes E_{jj}\Bigr)\\
&+\frac{(1-rs^{-1})w}{w-zrs^{-1}}\Bigl(\sum\limits_{i>j}E_{ij}\otimes E_{ji}+\frac{z}{w}\sum\limits_{i<j}E_{ij}\otimes E_{ji}\Bigr),
\end{align*}
we get $\hat{R}_{A}(\frac{z}{w})$=$\hat{R}_1(\frac{z}{w})$. Thereby, we can directly have the following relations of generators $\Bigl\{X^{\pm}_1(z), X^{\pm}_2(z), X^{\pm}_3(z), k^{\pm}_{1}(z), k^{\pm}_{2}(z), k^{\pm}_{3}(z), k^{\pm}_{4}(z)\Bigr\}$.

Next, we need to obtain the relations between the remaining Gauss generators. First, using the Gauss decomposition, we write down $L^{\pm}(z)$ and $L^{\pm}(z)^{-1}$:
\begin{align*}
L^{\pm}(z)=&\left(\begin{array}{ccc}k^{\pm}_1(z)&k^{\pm}_1(z)e^{\pm}_{12}(z)& \cdots \\f^{\pm}_{21}(z)k^{\pm}_1(z)&\vdots&\vdots\\ \vdots &\vdots&\vdots \end{array}\right),
\end{align*}
and
\begin{align*}
L^{\pm}(z)^{-1}=&\left(\begin{array}{ccc}\cdots&\cdots& \cdots \\ \cdots&\cdots&-e^{\pm}_{N-1,N}(z)k^{\pm}_N(z)^{-1}\\ \cdots &-k^{\pm}_N(z)^{-1}f^{\pm}_{N,N-1}(z)& k^{\pm}_N(z)^{-1}\end{array}\right).
\end{align*}
Then using the generating series relations (5.2) and (5.4), we can complete our proof by using the following lemmas.
\end{proof}
\begin{lemma}
The following equations hold in $\mathcal{U}(\hat R)$:
\begin{align}
k^{\pm}_5(w)X^{\pm}_i(z)&=X^{\pm}_i(z)k^{\pm}_5(w),\\
k^{\pm}_5(w)X^{\mp}_i(z)&=X^{\mp}_i(z)k^{\pm}_5(w),\\
k^{\pm}_j(z)k^{\pm}_5(w)&=k^{\pm}_5(w)k^{\pm}_j(z),\\
k^{\pm}_j(z)k^{\mp}_5(w)\frac{z_{\pm}-w_{\mp}}{rz_{\pm}-sw_{\mp}}&=\frac{z_{\mp}-w_{\pm}}{rz_{\mp}-sw_{\pm}}k^{\mp}_5(w)k^{\pm}_j(z),
\end{align}
where $1\le i\le2$ and $1\le j\le 3$.
\end{lemma}
\begin{proof}
Due to the observation made in formulas (5.9) and (5.11), the relations between the Gaussian generators mentioned above follow from those for the quantum affine algebra $U_{r,s}(\widehat{\mathfrak{gl}_{5}})$; see \cite{JL}.
\end{proof}
\begin{lemma}
\begin{align}
k^{\pm}_1(w)X^{+}_4(z)&=rsX^{+}_4(z)k^{\pm}_1(w),\\
rsk^{\pm}_1(w)X^{-}_4(z)&=X^{-}_4(z)k^{\pm}_1(w),\\
k^{\pm}_2(w)X^{+}_4(z)&=rsX^{+}_4(z)k^{\pm}_2(w),\\
rsk^{\pm}_2(w)X^{-}_4(z)&=X^{-}_4(z)k^{\pm}_2(w),\\
X^{\pm}_4(w)X^{\pm}_1(z)&=X^{\pm}_1(z)X^{\pm}_4(w),\\
X^{\pm}_4(w)X^{\mp}_1(z)&=X^{\mp}_1(z)X^{\pm}_4(w).
\end{align}
\end{lemma}
\begin{proof}
We only give details for one case of (6.9), (6.11), (6.13) and (6.14), the remaining relations are verified in a similar way. By (5.9) and (5.11), taking the equations $M_{11}=M^{\prime}_{11}$, we get
\begin{align*}
a_{15}(\frac{z}{w})k^{\pm}_3(w)e^{\pm}_{3,5}(w)l^{\pm}_{11}(z)=a_{13}(\frac{z}{w})l^{\pm}_{11}(z)k^{\pm}_3(w)e^{\pm}_{3,5}(w),
\end{align*}
and the relations between $k^{\pm}_3(w)e^{\pm}_{3,5}(w)$ and $l^{\mp}_{11}(z)$ we have
\begin{align*}
a_{15}(\frac{z_{\mp}}{w_{\pm}})k^{\pm}_3(w)e^{\pm}_{3,5}(w)l^{\mp}_{11}(z)=a_{13}(\frac{z_{\pm}}{w_{\mp}})l^{\mp}_{11}(z)k^{\pm}_3(w)e^{\pm}_{3,5}(w).
\end{align*}
so that $k^{+}_1(z)X^{+}_4(w)=rsX^{+}_4(w)k^{+}_1(z)$. Now apply $M_{12}=M^{\prime}_{12}$ to obtain
\begin{align*}
a_{25}(\frac{z}{w})k^{\pm}_3(w)e^{\pm}_{3,5}(w)l^{\pm}_{12}(z)=a_{13}(\frac{z}{w})l^{\pm}_{12}(z)k^{\pm}_3(w)e^{\pm}_{3,5}(w),
\end{align*}
and
\begin{align*}
a_{25}(\frac{z_{\mp}}{w_{\pm}})k^{\pm}_3(w)e^{\pm}_{3,5}(w)l^{\mp}_{12}(z)=a_{13}(\frac{z_{\pm}}{w_{\mp}})l^{\mp}_{12}(z)k^{\pm}_3(w)e^{\pm}_{3,5}(w).
\end{align*}
So we get $X^{+}_4(w)X^{+}_1(z)=X^{+}_1(z)X^{+}_4(w)$. Since $M_{21}=M^{\prime}_{21}$, we get
\begin{align*}
a_{15}(\frac{z}{w})k^{\pm}_3(w)e^{\pm}_{3,5}(w)l^{\pm}_{21}(z)=a_{23}(\frac{z}{w})l^{\pm}_{21}(z)k^{\pm}_3(w)e^{\pm}_{3,5}(w).
\end{align*}
Furthermore,
\begin{align*}
a_{15}(\frac{z_{\mp}}{w_{\pm}})k^{\pm}_3(w)e^{\pm}_{3,5}(w)l^{\mp}_{21}(z)=a_{23}(\frac{z_{\pm}}{w_{\mp}})l^{\mp}_{21}(z)k^{\pm}_3(w)e^{\pm}_{3,5}(w),
\end{align*}
thus we prove a case of relation in (6.14). Since $M_{22}=M^{\prime}_{22}$, the formula is
\begin{align*}
a_{25}(\frac{z}{w})k^{\pm}_3(w)e^{\pm}_{3,5}(w)l^{\pm}_{22}(z)=a_{23}(\frac{z}{w})l^{\pm}_{22}(z)k^{\pm}_3(w)e^{\pm}_{3,5}(w),
\end{align*}
on the other hand, we have
\begin{align*}
a_{25}(\frac{z_{\mp}}{w_{\pm}})k^{\pm}_3(w)e^{\pm}_{3,5}(w)l^{\mp}_{22}(z)=a_{23}(\frac{z_{\pm}}{w_{\mp}})l^{\mp}_{22}(z)k^{\pm}_3(w)e^{\pm}_{3,5}(w).
\end{align*}
As a final step, use the relations between $l^{\pm}_{21}(z)$ and $X^{+}_4(w)$ and those between $e^{\mp}_{12}(z)$ and $X^{+}_4(w)$, to come to the relation
\begin{align*}
k^{+}_2(z)X^{+}_4(w)&=rsX^{+}_4(w)k^{+}_2(z),
\end{align*}
as required.
\end{proof}
\begin{lemma}
\begin{align}
(rw-sz_{\pm})k^{\pm}_3(w)X^{+}_4(z)&=rs(w-z_{\pm})X^{+}_4(z)k^{\pm}_3(w),\\
rs(w-z_{\mp})k^{\pm}_3(w)X^{-}_4(z)&=(rw-sz_{\mp})X^{-}_4(z)k^{\pm}_3(w),\\
X^{\pm}_4(w)X^{\mp}_2(z)&=X^{\mp}_2(z)X^{\pm}_4(w),\\
(z-w)X^{+}_4(w)X^{+}_2(z)&=X^{+}_2(z)X^{+}_4(w)(rz-sw)\nonumber,\\
(rz-sw)X^{-}_4(w)X^{-}_2(z)&=X^{-}_2(z)X^{-}_4(w)(z-w).
\end{align}
\end{lemma}
\begin{proof}
The arguments for both formulas are quite similar so we only give a proof of one case of (6.17) and (6.18), taking the equations $M_{13}=M^{\prime}_{13}$, we get
\begin{align}
a_{35}(\frac{z}{w})l^{\pm}_{15}(w)l^{\pm}_{13}(z)+b_{53}(\frac{z}{w})l^{\pm}_{13}(w)l^{\pm}_{15}(z)&=l^{\pm}_{13}(z)l^{\pm}_{15}(w),\\
a_{35}(\frac{z}{w})l^{\pm}_{25}(w)l^{\pm}_{13}(z)+b_{53}(\frac{z}{w})l^{\pm}_{23}(w)l^{\pm}_{15}(z)&=a_{12}(\frac{z}{w})l^{\pm}_{13}(z)l^{\pm}_{25}(w)+b_{12}(\frac{z}{w})l^{\pm}_{23}(z)l^{\pm}_{15}(w),\\
a_{35}(\frac{z}{w})l^{\pm}_{35}(w)l^{\pm}_{13}(z)+b_{53}(\frac{z}{w})l^{\pm}_{33}(w)l^{\pm}_{15}(z)&=a_{13}(\frac{z}{w})l^{\pm}_{13}(z)l^{\pm}_{35}(w)+b_{13}(\frac{z}{w})l^{\pm}_{33}(z)l^{\pm}_{15}(w).
\end{align}
Using $f^{\pm}_{32}(w)f^{\pm}_{21}(w)\cdot(6.19)-f^{\pm}_{32}(w)(6.20)-f^{\pm}_{31}(w)\cdot(6.19)+(6.21)$, through a lot of calculations, we can obtain
\begin{align}
&a_{35}(\frac{z}{w})k^{\pm}_3(w)e^{\pm}_{35}(w)l^{\pm}_{13}(z)+b_{53}(\frac{z}{w})k^{\pm}_{3}(w)l^{\pm}_{15}(z)+a_{13}(\frac{z}{w})f^{\pm}_{32}(w)l^{\pm}_{13}(z)k^{\pm}_2(w)e^{\pm}_{25}(w)\nonumber\\
&\quad=a_{13}(\frac{z}{w})l^{\pm}_{13}(z)f^{\pm}_{32}(w)k^{\pm}_2(w)e^{\pm}_{25}(w)+a_{13}(\frac{z}{w})l^{\pm}_{13}(z)k^{\pm}_3(w)e^{\pm}_{35}(w)+b_{13}(\frac{z}{w})k^{\pm}_{3}(w)l^{\pm}_{11}(z)e^{\pm}_{15}(w).
\end{align}
Taking the equations $M_{23}=M^{\prime}_{23}$, we have
\begin{align}
a_{35}(\frac{z}{w})l^{\pm}_{15}(w)l^{\pm}_{23}(z)+b_{53}(\frac{z}{w})l^{\pm}_{13}(w)l^{\pm}_{25}(z)&=a_{21}(\frac{z}{w})l^{\pm}_{23}(z)l^{\pm}_{15}(w)+b_{21}(\frac{z}{w})l^{\pm}_{13}(z)l^{\pm}_{25}(w),\\
a_{35}(\frac{z}{w})l^{\pm}_{25}(w)l^{\pm}_{23}(z)+b_{53}(\frac{z}{w})l^{\pm}_{23}(w)l^{\pm}_{25}(z)&=l^{\pm}_{23}(z)l^{\pm}_{25}(w),\\
a_{35}(\frac{z}{w})l^{\pm}_{35}(w)l^{\pm}_{23}(z)+b_{53}(\frac{z}{w})l^{\pm}_{33}(w)l^{\pm}_{25}(z)&=a_{23}(\frac{z}{w})l^{\pm}_{23}(z)l^{\pm}_{35}(w)+b_{23}(\frac{z}{w})l^{\pm}_{33}(z)l^{\pm}_{25}(w).
\end{align}
A similar calculation, we come to the relation
\begin{align}
a_{35}(\frac{z}{w})k^{\pm}_3(w)e^{\pm}_{35}(w)l^{\pm}_{23}(z){+}b_{53}(\frac{z}{w})k^{\pm}_{3}(w)l^{\pm}_{25}(z)=b_{21}(\frac{z}{w})\Bigl\{f^{\pm}_{32}(w)f^{\pm}_{21}(w)l^{\pm}_{13}(z)k^{\pm}_2(w)e^{\pm}_{25}(w)&\nonumber\\
-f^{\pm}_{31}(w)l^{\pm}_{13}(z)k^{\pm}_2(w)e^{\pm}_{25}(w)\Bigr\}
{+}a_{23}(\frac{z}{w})\Bigl\{l^{\pm}_{23}(z)f^{\pm}_{32}(w)k^{\pm}_2(w)e^{\pm}_{25}(w)
{+}l^{\pm}_{23}(z)k^{\pm}_{3}(w)e^{\pm}_{35}(w)\Bigr\}&\nonumber\\
+b_{23}(\frac{z}{w})\Bigl\{l^{\pm}_{33}(z)k^{\pm}_2(w)e^{\pm}_{25}(w){+}l^{\pm}_{21}(z)k^{\pm}_3(w)e^{\pm}_{15}(w)\Bigr\}
{-}f^{\pm}_{32}(w)l^{\pm}_{23}(z)k^{\pm}_2(w)e^{\pm}_{25}(w).&
\end{align}
Furthermore, by using $-f^{\pm}_{21}(z)\cdot(6.22)+(6.26)$, we get
\begin{align*}
a_{35}&(\frac{z}{w})k^{\pm}_3(w)e^{\pm}_{35}(w)k^{\pm}_2(z)e^{\pm}_{23}(z)+b_{53}(\frac{z}{w})k^{\pm}_{3}(w)k^{\pm}_2(z)e^{\pm}_{25}(z)\nonumber\\
&=b_{23}(\frac{z}{w})k^{\pm}_2(z)k^{\pm}_3(w)e^{\pm}_{25}(w)+a_{23}(\frac{z}{w})k^{\pm}_2(z)
e^{\pm}_{23}(z)k^{\pm}_3(w)e^{\pm}_{35}(w),
\end{align*}
and taking into account the relations between $e^{\mp}_{3,5}(w)$ and $e^{\pm}_{23}(z)$, we have
\begin{align*}
a_{35}(\frac{z_{\mp}}{w_{\pm}})&k^{\mp}_3(w)e^{\mp}_{35}(w)k^{\pm}_2(z)e^{\pm}_{23}(z)+b_{53}(\frac{z_{\mp}}{w_{\pm}})k^{\mp}_{3}(w)k^{\pm}_2(z)e^{\pm}_{25}(z)\nonumber\\
&=b_{23}(\frac{z_{\pm}}{w_{\mp}})k^{\pm}_2(z)k^{\mp}_3(w)e^{\mp}_{25}(w)+a_{23}(\frac{z_{\pm}}{w_{\mp}})k^{\pm}_2(z)e^{\pm}_{23}(z)k^{\mp}_3(w)e^{\mp}_{35}(w).
\end{align*}
Therefore, we can arrive at $(z-w)X^{+}_4(w)X^{+}_2(z)=X^{+}_2(z)X^{+}_4(w)(rz-sw)$. Now turn to (6.24).  Taking the equation $M_{31}=M^{\prime}_{31}$, we give
\begin{align}
a_{15}(\frac{z_{\mp}}{w_{\pm}})k^{\mp}_3(w)e^{\mp}_{35}(w)l^{\pm}_{31}(z)&=\Bigl\{l^{\pm}_{31}(z)-b_{32}(\frac{z_{\pm}}{w_{\mp}})f^{\mp}_{32}(w)l^{\pm}_{21}(z)\nonumber\\
                                                     &\ +b_{31}(\frac{z_{\pm}}{w_{\mp}})\Bigl(f^{\mp}_{32}(w)f^{\mp}_{21}(w)-f^{\mp}_{31}(w)\Bigr)l^{\pm}_{11}(z)\Bigr\}k^{\mp}_3(w)e^{\mp}_{35}(w),\\
a_{15}(\frac{z}{w})k^{\pm}_3(w)e^{\pm}_{35}(w)l^{\pm}_{31}(z)&=\Bigl\{l^{\pm}_{31}(z)-b_{32}(\frac{z}{w})f^{\pm}_{32}(w)l^{\pm}_{21}(z)\nonumber\\
                                                       &\ +b_{31}(\frac{z}{w})\Bigl(f^{\pm}_{32}(w)f^{\pm}_{21}(w)-f^{\pm}_{31}(w)\Bigr)l^{\pm}_{11}(z)\Bigr\}k^{\pm}_3(w)e^{\pm}_{35}(w).
\end{align}
Using $M_{32}=M^{\prime}_{32}$, consider the relations
\begin{align}
a_{15}(\frac{z_{\mp}}{w_{\pm}})k^{\mp}_3(w)e^{\mp}_{35}(w)l^{\pm}_{32}(z)&=\Bigl\{l^{\pm}_{32}(z)-b_{32}(\frac{z_{\pm}}{w_{\mp}})f^{\mp}_{32}(w)l^{\pm}_{22}(z)\nonumber\\
                                                     &\ +b_{31}(\frac{z_{\pm}}{w_{\mp}})\Bigl(f^{\mp}_{32}(w)f^{\mp}_{21}(w)-f^{\mp}_{31}(w)\Bigr)l^{\pm}_{12}(z)\Bigr\}k^{\mp}_3(w)e^{\mp}_{35}(w),\\
a_{15}(\frac{z}{w})k^{\pm}_3(w)e^{\pm}_{35}(w)l^{\pm}_{32}(z)&=\Bigl\{l^{\pm}_{32}(z)-b_{32}(\frac{z}{w})f^{\pm}_{32}(w)l^{\pm}_{22}(z)\nonumber\\
                                                       &\ +b_{31}(\frac{z}{w})\Bigl(f^{\pm}_{32}(w)f^{\pm}_{21}(w)
                                                       -f^{\pm}_{31}(w)\Bigr)l^{\pm}_{12}(z)\Bigr\}k^{\pm}_3(w)e^{\pm}_{35}(w).
\end{align}
By $-(6.27)\cdot e^{\pm}_{12}(z)+(6.29)$ and $-(6.28)\cdot e^{\pm}_{12}(z)+(6.30)$, we can obtain separately,
\begin{align*}
a_{15}(\frac{z}{w})k^{\pm}_3(w)e^{\pm}_{35}(w)f^{\pm}_{32}(z)k^{\pm}_2(z)&=\Bigl\{f^{\pm}_{32}(z)k^{\pm}_2(z){-}b_{32}(\frac{z}{w})f^{\pm}_{32}(w)k^{\pm}_{2}(z)\Bigr\}k^{\pm}_3(w)e^{\pm}_{35}(w),\\
a_{15}(\frac{z_{\mp}}{w_{\pm}})k^{\mp}_3(w)e^{\mp}_{35}(w)f^{\pm}_{32}(z)k^{\pm}_2(z)&=\Bigl\{f^{\pm}_{32}(z)
k^{\pm}_2(z){-}b_{32}(\frac{z_{\pm}}{w_{\mp}})f^{\mp}_{32}(w)k^{\pm}_{2}(z)\Bigr\}k^{\mp}_3(w)e^{\mp}_{35}(w),
\end{align*}
and therefore $X^{+}_4(w)X^{-}_2(z)=X^{-}_2(z)X^{+}_4(w)$.
\end{proof}
\begin{lemma}
In the algebra $\mathcal{U}(\hat R)$, we have
\begin{align*}
e^{\pm}_{36}(z)=f^{\pm}_{63}(z)=0,\quad e^{\pm}_{45}(z)=f^{\pm}_{54}(z)=0.
\end{align*}
\begin{proof}
We only verify a case of the first relation. By (5.9) and (5.11), we have $M_{13}=M^{\prime}_{13}$, and so get the relations:
\begin{align}
\sum\limits_{i=1}^{8}c_{i6}(\frac{z}{w})l^{\pm}_{1i}(w)l^{\pm}_{1i^{\prime}}(z)&=l^{\pm}_{13}(z)l^{\pm}_{16}(w),\\
\sum\limits_{i=1}^{8}c_{i6}(\frac{z}{w})l^{\pm}_{2i}(w)l^{\pm}_{1i^{\prime}}(z)&=a_{12}(\frac{z}{w})l^{\pm}_{13}(z)l^{\pm}_{26}(w)+b_{12}(\frac{z}{w})l^{\pm}_{23}(z)l^{\pm}_{16}(w),\\
\sum\limits_{i=1}^{8}c_{i6}(\frac{z}{w})l^{\pm}_{3i}(w)l^{\pm}_{1i^{\prime}}(z)&=a_{13}(\frac{z}{w})l^{\pm}_{13}(z)l^{\pm}_{36}(w)+b_{13}(\frac{z}{w})l^{\pm}_{33}(z)l^{\pm}_{16}(w),
\end{align}
let $\Bigl(f^{\pm}_{32}(w)f^{\pm}_{21}(w)-f^{\pm}_{31}(w)\Bigr)\cdot (6.31)-f^{\pm}_{32}(w)\cdot (6.32)+(6.33)$. Through a lot of calculations, we obtain
\begin{align}
\sum\limits_{i=3}^{8}c_{i6}(\frac{z}{w})k^{\pm}_{3}(w)e^{\pm}_{3i}(w)l^{\pm}_{1i^{\prime}}(z)=a_{13}&(\frac{z}{w})l^{\pm}_{13}(z)k^{\pm}_3(w)e^{\pm}_{36}(w)+b_{13}(\frac{z}{w})k^{\pm}_{3}(w)k^{\pm}_1(z)\nonumber\\                                                      &\cdot\Bigl\{\Bigl(e^{\pm}_{12}(w)-e^{\pm}_{12}(z)\Bigr)e^{\pm}_{26}(w)-e^{\pm}_{16}(w)\Bigr\}.
\end{align}
And from $M_{23}=M^{\prime}_{23}$, we obtain
\begin{align}
\sum\limits_{i=1}^{8}c_{i6}(\frac{z}{w})l^{\pm}_{1i}(w)l^{\pm}_{2i^{\prime}}(z)&=a_{21}(\frac{z}{w})l^{\pm}_{23}(z)l^{\pm}_{16}(w)+b_{21}(\frac{z}{w})l^{\pm}_{13}(z)l^{\pm}_{26}(w),\\
\sum\limits_{i=1}^{8}c_{i6}(\frac{z}{w})l^{\pm}_{2i}(w)l^{\pm}_{2i^{\prime}}(z)&=l^{\pm}_{23}(z)l^{\pm}_{26}(w),\\
\sum\limits_{i=1}^{8}c_{i6}(\frac{z}{w})l^{\pm}_{3i}(w)l^{\pm}_{2i^{\prime}}(z)&=a_{23}(\frac{z}{w})l^{\pm}_{23}(z)l^{\pm}_{36}(w)+b_{23}(\frac{z}{w})l^{\pm}_{33}(z)l^{\pm}_{26}(w),
\end{align}
Calculating $\Bigl(f^{\pm}_{32}(w)f^{\pm}_{21}(w)-f^{\pm}_{31}(w)\Bigr)\cdot (6.35)-f^{\pm}_{32}(w)\cdot (6.36)+(6.37)-f^{\pm}_{21}(z)\cdot (6.34)$, we get
\begin{align}
\sum\limits_{i=3}^{8}c_{i6}(\frac{z}{w})k^{\pm}_{3}(w)e^{\pm}_{3i}(w)k^{\pm}_2(z)e^{\pm}_{2i^{\prime}}(z)
=a_{23}&(\frac{z}{w})k^{\pm}_{2}(z)e^{\pm}_{23}(z)k^{\pm}_{3}(w)e^{\pm}_{36}(w)\nonumber\\
&+b_{23}(\frac{z}{w})k^{\pm}_{3}(w)k^{\pm}_{2}(z)e^{\pm}_{26}(w).
\end{align}
Using $M_{23}=M^{\prime}_{23}$, we come by the relations
\begin{align}
\sum\limits_{i=1}^{8}c_{i6}(\frac{z}{w})l^{\pm}_{1i}(w)l^{\pm}_{3i^{\prime}}(z)&
=a_{31}(\frac{z}{w})l^{\pm}_{33}(z)l^{\pm}_{16}(w)+b_{31}(\frac{z}{w})l^{\pm}_{13}(z)l^{\pm}_{36}(w),\\
\sum\limits_{i=1}^{8}c_{i6}(\frac{z}{w})l^{\pm}_{2i}(w)l^{\pm}_{3i^{\prime}}(z)&
=a_{32}(\frac{z}{w})l^{\pm}_{33}(z)l^{\pm}_{26}(w)+b_{32}(\frac{z}{w})l^{\pm}_{23}(z)l^{\pm}_{36}(w),\\
\sum\limits_{i=1}^{8}c_{i6}(\frac{z}{w})l^{\pm}_{3i}(w)l^{\pm}_{3i^{\prime}}(z)&=l^{\pm}_{33}(z)l^{\pm}_{36}(w).
\end{align}
Calculating $\Bigl(f^{\pm}_{32}(w)f^{\pm}_{21}(w)-f^{\pm}_{31}(w)\Bigr)\cdot (6.39)-f^{\pm}_{32}(w)\cdot (6.40)+(6.41)-f^{\pm}_{32}(z)\cdot (6.38)-f^{\pm}_{31}(z)\cdot(6.34)$, we arrive at the relation:
\begin{align}
\sum\limits_{i=3}^{8}c_{i6}(\frac{z}{w})k^{\pm}_{3}(w)e^{\pm}_{3i}(w)l^{\pm}_{3i^{\prime}}(z)=k^{\pm}_{3}(z)k^{\pm}_{3}(w)e^{\pm}_{36}(w).
\end{align}
Setting $z=w$, we get $e^{\pm}_{36}(z)=0$. The remaining relations can be proved in a similar way.
\end{proof}
\end{lemma}
\begin{lemma}
The following equations hold in $\mathcal{U}(\hat R)$:
\begin{align}
X^{\pm}_{4}(w)X^{\pm}_3(z)&=(rs)^{\pm 1}X^{\pm}_3(z)X^{\pm}_4(w),\\
X^{\pm}_3(w)X^{\mp}_4(z)&=X^{\mp}_{4}(z)X^{\pm}_3(w).
\end{align}
\end{lemma}
\begin{proof}
The arguments are similar for all relations so we only prove the relation $X^{+}_{4}(w)X^{+}_3(z)=rsX^{+}_3(z)X^{+}_4(w)$. By Eqs. (5.9)---(5.12), we have $M_{34}=M^{\prime}_{34}$, and then get the relation:
\begin{align*}
\sum\limits_{i=1}^{8}c_{i5}(\frac{z}{w})l^{\pm}_{3i}(w)l^{\pm}_{3i^{\prime}}(z)=l^{\pm}_{34}(z)l^{\pm}_{35}(w),
\end{align*}
through a similar calculation process in Lemma 6.5, which yields
\begin{align}
\sum\limits_{i=3}^{8}c_{i5}(\frac{z}{w})k^{\pm}_{3}(w)e^{\pm}_{3i}(w)k^{\pm}_{3}(z)e^{\pm}_{3i^{\prime}}(z)=k^{\pm}_{3}(z)e^{\pm}_{34}(z)k^{\pm}_{3}(w)e^{\pm}_{35}(w),
\end{align}
and from $M_{35}=M^{\prime}_{35}$, we obtain
\begin{align}
\sum\limits_{i=3}^{8}c_{i4}(\frac{z}{w})k^{\pm}_{3}(w)e^{\pm}_{3i}(w)k^{\pm}_{3}(z)e^{\pm}_{3i^{\prime}}(z)=k^{\pm}_{3}(z)e^{\pm}_{35}(z)k^{\pm}_{3}(w)e^{\pm}_{34}(w).
\end{align}
Furthermore, $M_{33}=M^{\prime}_{33}$ and $M_{36}=M^{\prime}_{36}$ give that
\begin{align}
\sum\limits_{i=3}^{8}c_{i6}(\frac{z}{w})k^{\pm}_{3}(w)e^{\pm}_{3i}(w)k^{\pm}_{3}(z)e^{\pm}_{3i^{\prime}}(z)=k^{\pm}_{3}(z)k^{\pm}_{3}(w)e^{\pm}_{36}(w),
\end{align}
and
\begin{align}
\sum\limits_{i=3}^{8}c_{i3}(\frac{z}{w})k^{\pm}_{3}(w)e^{\pm}_{3i}(w)k^{\pm}_{3}(z)e^{\pm}_{3i^{\prime}}(z)=k^{\pm}_{3}(z)e^{\pm}_{36}(z)k^{\pm}_{3}(w).
\end{align}
Combining (6.45) with (6.46), we get that
\begin{align}
(rs)^{-1}X^{+}_{4}(w)X^{+}_{3}(z)-X^{+}_{3}(w)X^{+}_{4}(z)=X^{+}_{3}(z)X^{+}_{4}(w)-(rs)^{-1}X^{+}_{4}(z)X^{+}_{3}(w).
\end{align}
Taking $(6.45)$ and $(6.47)$, owing to Lemma 6.5 and the fact that $e^{\pm}_{36}(z)=0$, we have
\begin{align}
\sum\limits_{i=4}^{5}k^{\pm}_{3}(w)&e^{\pm}_{3i}(w)k^{\pm}_{3}(z)e^{\pm}_{3i^{\prime}}(z)\Bigl(c_{i5}(\frac{z}{w})-(r^{-1}s)^{\frac{1}{2}}c_{i6}(\frac{z}{w})\Bigr)=k^{\pm}_{3}(z)e^{\pm}_{34}(z)k^{\pm}_{3}(w)e^{\pm}_{35}(w).
\end{align}
Using the relations $(6.46)$ and $(6.48)$, we get
\begin{align}
\sum\limits_{i=4}^{5}k^{\pm}_{3}(w)&e^{\pm}_{3i}(w)k^{\pm}_{3}(z)e^{\pm}_{3i^{\prime}}(z)\Bigl(c_{i4}(\frac{z}{w})-(rs^{-1})^{\frac{1}{2}}c_{i3}(\frac{z}{w})\Bigr)=k^{\pm}_{3}(z)e^{\pm}_{35}(z)k^{\pm}_{3}(w)e^{\pm}_{34}(w).
\end{align}
Exchanging $z$ and $w$ in the relation (6.51), and combining into (6.50), we obtain
\begin{align}
z(rs)^{-1}X^{+}_{4}(w)X^{+}_{3}(z)-wX^{+}_{3}(w)X^{+}_{4}(z)=zX^{+}_{3}(z)X^{+}_{4}(w)-w(rs)^{-1}X^{+}_{4}(z)X^{+}_{3}(w)
\end{align}
By (6.49) and (6.52), we get $X^{+}_{4}(w)X^{+}_{3}(z)=rsX^{+}_{3}(z)X^{+}_{4}(w)$.
\end{proof}
\begin{lemma}
The following equations hold in $\mathcal{U}(\hat R)$:
\begin{align}
[X^{+}_{4}(z),X^{-}_{4}(w)]=(s^{-1}{-}r^{-1})\Bigl\{\delta\Bigl(\frac{z_{-}}{w_+}\Bigr)k^{-}_{5}(w_+)k^{-}_3(w_+)^{-1}
{-}\delta\Bigl(\frac{z_{+}}{w_-}\Bigr)k^{+}_{5}(z_+)k^{+}_3(z_+)^{-1}\Bigr\}.
\end{align}
\end{lemma}
\begin{proof}
By (5.10) and (5.12), we have $M_{35}=M^{\prime}_{35}$ and then get the relations:
\begin{align*}
a_{53}(\frac{z}{w})l^{\pm}_{13}(w)l^{\pm}_{35}(z){+}b_{35}(\frac{z}{w})l^{\pm}_{15}(w)l^{\pm}_{33}(z)&
=b_{31}(\frac{z}{w})l^{\pm}_{15}(z)l^{\pm}_{33}(w){+}a_{31}(\frac{z}{w})l^{\pm}_{35}(z)l^{\pm}_{13}(w),\\
a_{53}(\frac{z}{w})l^{\pm}_{23}(w)l^{\pm}_{35}(z){+}b_{35}(\frac{z}{w})l^{\pm}_{25}(w)l^{\pm}_{33}(z)&
=b_{32}(\frac{z}{w})l^{\pm}_{25}(z)l^{\pm}_{33}(w){+}a_{32}(\frac{z}{w})l^{\pm}_{35}(z)l^{\pm}_{23}(w),\\
a_{53}(\frac{z}{w})l^{\pm}_{33}(w)l^{\pm}_{35}(z){+}b_{35}(\frac{z}{w})l^{\pm}_{35}(w)l^{\pm}_{33}(z)&
=l^{\pm}_{35}(z)l^{\pm}_{33}(w),\\
a_{53}(\frac{z}{w})l^{\pm}_{53}(w)l^{\pm}_{35}(z){+}b_{35}(\frac{z}{w})l^{\pm}_{55}(w)l^{\pm}_{33}(z)&
=b_{35}(\frac{z}{w})l^{\pm}_{55}(z)l^{\pm}_{33}(w){+}a_{35}(\frac{z}{w})l^{\pm}_{35}(z)l^{\pm}_{53}(w).
\end{align*}
By straightforward calculations, one can check that
\begin{align*}
[X^{+}_{4}(z),X^{-}_{4}(w)]=(s^{-1}{-}r^{-1})\Bigl\{\delta\Bigl(\frac{z_{-}}{w_+}\Bigr)k^{-}_{5}(w_+)k^{-}_3(w_+)^{-1}
{-}\delta\Bigl(\frac{z_{+}}{w_-}\Bigr)k^{+}_{5}(z_+)k^{+}_3(z_+)^{-1}\Bigr\}.
\end{align*}

This completes the proof.
\end{proof}
\begin{lemma}
The following equations hold in $\mathcal{U}(\hat R)$:
\begin{align}
k^{\pm}_5(w)X^{+}_3(z)&=\frac{rs(z_{\pm}-w)}{z_{\pm}s-rw}X^{+}_3(z)k^{\pm}_5(w),\\
X^{-}_3(z)k^{\pm}_5(w)&=\frac{rs(z_{\mp}-w)}{z_{\mp}s-rw}k^{\pm}_5(w)X^{-}_3(z),\\
k^{\pm}_4(w)X^{+}_4(z)&=\frac{wr-sz_{\pm}}{w-z_{\pm}}X^{+}_4(z)k^{\pm}_4(w),\\
X^{-}_4(z)k^{\pm}_4(w)&=\frac{wr-sz_{\mp}}{w-z_{\mp}}k^{\pm}_4(w)X^{-}_4(z).
\end{align}
\end{lemma}
\begin{proof}
We only give a proof of one case of (6.54). Similarly, we give the other identities. Using again $M_{35}=M^{\prime}_{35}$ and $M_{34}=M^{\prime}_{34}$, we get
\begin{align*}
\sum\limits_{i=1}^{8}c_{i5}(\frac{z}{w})l^{\pm}_{5i}(w)l^{\pm}_{3i^{\prime}}(z)=b_{35}(\frac{z}{w})l^{\pm}_{54}(z)l^{\pm}_{35}(w)+a_{35}(\frac{z}{w})l^{\pm}_{34}(z)l^{\pm}_{55}(w)
\end{align*}
through the same calculating process in Lemma 6.5, which yields
\begin{align}
\sum\limits_{i=5}^{8}c_{i5}(\frac{z}{w})k^{\pm}_{5}(w)e^{\pm}_{5i}(w)k^{\pm}_{3}(z)e^{\pm}_{3i^{\prime}}(z)=a_{35}(\frac{z}{w})k^{\pm}_{3}(z)e^{\pm}_{34}(z)k^{\pm}_{5}(w).
\end{align}
From $M_{35}=M^{\prime}_{35}$, we obtain
\begin{align*}
\sum\limits_{i=1}^{8}c_{i4}(\frac{z}{w})l^{\pm}_{5i}(w)l^{\pm}_{3i^{\prime}}(z)=b_{35}(\frac{z}{w})l^{\pm}_{55}(z)l^{\pm}_{34}(w)+a_{35}(\frac{z}{w})l^{\pm}_{35}(z)l^{\pm}_{54}(w),
\end{align*}
we arrive at the relation
\begin{align}
\sum\limits_{i=5}^{8}c_{i4}(\frac{z}{w})e^{\pm}_{5i}(w)k^{\pm}_{3}(z)e^{\pm}_{3i^{\prime}}(z)=b_{35}(\frac{z}{w})k^{\pm}_{5}(w)k^{\pm}_{3}(z)e^{\pm}_{34}(w).
\end{align}
Using (6.58) and (6.59), due to the reltaion $e^{\pm}_{45}(z)=f^{\pm}_{54}(z)=0$, we obtain
\begin{align*}
\begin{split}
k^{\pm}_{5}(w)k^{\pm}_{3}(z)e^{\pm}_{34}(z)\Bigl(c_{55}(\frac{z}{w})-c_{54}(\frac{z}{w})\Bigr)&
=a_{35}(\frac{z}{w})k^{\pm}_{3}(z)e^{\pm}_{34}(z)k^{\pm}_{5}(w)\\
&\quad\,-b_{35}(\frac{z}{w})k^{\pm}_{5}(w)k^{\pm}_{3}(z)e^{\pm}_{34}(w),
\end{split}
\end{align*}
finally, we get
\begin{align*}
k^{+}_5(w)X^{+}_3(z)=\frac{rs(z_+-w)}{z_+s-rw}X^{+}_3(z)k^{+}_5(w).
\end{align*}

This completes the proof.
\end{proof}
\begin{lemma}
\begin{align}
k^{\pm}_4(z)k^{\pm}_5(w)&=k^{\pm}_5(w)k^{\pm}_4(z),\\
k^{\pm}_4(z)k^{\mp}_5(w)\frac{z_{\pm}-w_{\mp}}{rz_{\pm}-sw_{\mp}}&=\frac{z_{\mp}-w_{\pm}}{rz_{\mp}-sw_{\pm}}k^{\mp}_5(w)k^{\pm}_4(z).
\end{align}
\end{lemma}
\begin{proof}
The proof is similar to that of Lemma 6.5.
\end{proof}
\begin{lemma}
The following equations hold in $\mathcal{U}(\hat R)$:
\begin{align}
k^{\pm}_5(z)k^{\pm}_5(w)&=k^{\pm}_5(w)k^{\pm}_5(z),\\
k^{\pm}_5(z)k^{\mp}_{5}(w)&=k^{\mp}_{5}(w)k^{\pm}_5(z),\\
k^{\pm}_5(w)X^{+}_4(z)&=\frac{rs(w-z_{\pm})}{sw-rz_{\pm}}X^{+}_4(z)k^{\pm}_5(w),\\
X^{-}_4(z)k^{\pm}_5(w)&=\frac{rs(w-z_{\mp})}{sw-rz_{\mp}}k^{\pm}_5(w)X^{-}_4(z).
\end{align}
\end{lemma}
\begin{proof}
Here we only prove (6.64) as the other relations can be shown similarly. By (5.10) and (5.12), we have $M_{35}=M^{\prime}_{35}$, then we can get the relation:
\begin{align*}
l^{\pm}_{55}(w)l^{\pm}_{35}(z)=b_{35}(\frac{z}{w})l^{\pm}_{55}(z)l^{\pm}_{35}(w)+a_{35}(\frac{z}{w})l^{\pm}_{35}(z)l^{\pm}_{55}(w).
\end{align*}
By straightforward calculations, one checks that
\begin{align*}
k^{+}_5(w)X^{+}_4(z)&=\frac{rs(z_{+}-w)}{rz_{+}-sw}X^{+}_4(z)k^{+}_5(w).
\end{align*}

This completes the proof.
\end{proof}
\end{theorem}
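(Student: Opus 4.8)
The plan is to extract the four identities from the defining relations $(5.2)$ and $(5.4)$ of $\mathcal{U}(\hat R)$, in their block form $(5.9)$--$(5.12)$, by restricting the matrix equations to the entries built from the indices $3,4,5$ in the base case $n=4$, and then rewriting every $l^{\pm}_{ij}(z)$ via the Gauss decomposition of Proposition 5.5. Two structural facts make this tractable. First, by the vanishing relations $e^{\pm}_{45}(z)=f^{\pm}_{54}(z)=0$ and $e^{\pm}_{36}(z)=f^{\pm}_{63}(z)=0$ of Lemma 6.5, the $(5,5)$-entry of $L^{\pm}(z)$ already equals $k^{\pm}_5(z)$, while the $(3,5)$- and $(5,3)$-entries reduce to $k^{\pm}_3(z)\,e^{\pm}_{35}(z)$ and $f^{\pm}_{53}(z)\,k^{\pm}_3(z)$, so the $L$-relations collapse to relations purely among $k^{\pm}_5$, $k^{\pm}_3$ and $e^{\pm}_{35}$, $f^{\pm}_{53}$. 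Second, on $\{1,\dots,2n\}$ there is no self-conjugate index and $(n+1)'=n$, so the diagonal coefficient $a_{n+1,n+1}(z/w)$ of $\hat R(z)$ equals $1$ and the single ``conjugate'' coupling attached to the index $n+1$ links it only to $n$, hence drops out once the $k^{\pm}_n$-terms are eliminated.

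First I would treat the $k^{\pm}_5$--$X^{+}_4$ relation $(6.64)$. Restricting $(5.2)$ to the block equation $M_{35}=M'_{35}$ and eliminating the lower Gaussian factors exactly as in the proofs of Lemma 6.5 and Lemma 6.8, one should reach the scalar relation
\[
l^{\pm}_{55}(w)\,l^{\pm}_{35}(z)=b_{35}\!\left(\tfrac{z}{w}\right)l^{\pm}_{55}(z)\,l^{\pm}_{35}(w)+a_{35}\!\left(\tfrac{z}{w}\right)l^{\pm}_{35}(z)\,l^{\pm}_{55}(w),
\]
with $a_{35},b_{35}$ the $\hat R(z)$-coefficients of Remark 5.3. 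Substituting $l^{\pm}_{55}=k^{\pm}_5$ and $l^{\pm}_{35}=k^{\pm}_3\,e^{\pm}_{35}$, and cancelling $k^{\pm}_3(z),k^{\pm}_3(w)$ against the $k^{\pm}_3$--$k^{\pm}_5$ and $k^{\pm}_3$--$e^{\pm}_{35}$ relations already available from the $U_{r,s}(\widehat{\mathfrak{gl}_5})$-part of Theorem 6.1, this becomes $k^{\pm}_5(w)\,e^{\pm}_{35}(z)=\varphi(\tfrac{z}{w})\,e^{\pm}_{35}(z)\,k^{\pm}_5(w)$ for an explicit rational function $\varphi$ read off from $a_{35},b_{35}$. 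The mixed term $k^{+}_5(w)\,e^{-}_{35}(z)$ (and its conjugate) comes out of the same restriction of $(5.4)$/$(5.12)$, now with the spectral arguments shifted by $r^{\frac{c}{2}},s^{\frac{c}{2}}$, yielding $\varphi$ evaluated at $\tfrac{z_\mp}{w_\pm}$. Assembling $X^{+}_4(z)=e^{+}_{35}(z_+)-e^{-}_{35}(z_-)$ and simplifying $\varphi$ then gives $k^{\pm}_5(w)X^{+}_4(z)=\dfrac{rs(w-z_{\pm})}{sw-rz_{\pm}}X^{+}_4(z)k^{\pm}_5(w)$. The identity $(6.65)$ follows by the same computation started from the lower-triangular block $M_{53}=M'_{53}$, with $f^{\pm}_{53}$ in place of $e^{\pm}_{35}$.

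For the two $k$--$k$ identities $(6.62)$ and $(6.63)$ I would use the quasi-determinant formula for $k^{\pm}_5(z)$ (the $(5,5)$-quasi-minor of $L^{\pm}(z)$) together with the $U_{r,s}(\widehat{\mathfrak{gl}_5})$-relations among the $l^{\pm}_{ij}$ with $i,j\le5$ guaranteed by the identification $\hat R_1(z/w)=\hat R_A(z/w)$ established in the proof of Theorem 6.1, following the elimination scheme used in the proofs of Lemma 6.5 and Lemma 6.9. Since $a_{n+1,n+1}(z/w)=1$ and the unique conjugate coupling at the index $n+1$ involves only $n$ and cancels, the quadratic relation reduces to $k^{\pm}_5(z)k^{\pm}_5(w)=k^{\pm}_5(w)k^{\pm}_5(z)$; and in the mixed-sign relation the spectral prefactor one would naively expect from $(5.4)$ also disappears for the same reason (this is exactly where the spin node $n+1$ behaves unlike a generic $\mathfrak{gl}_{2n}$-node), leaving $k^{\pm}_5(z)k^{\mp}_5(w)=k^{\mp}_5(w)k^{\pm}_5(z)$. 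The passage from $n=4$ to general $n$ is the induction already in place in the proof of Theorem 6.1: every entry used lies in the top-left $(n+1)\times(n+1)$ corner of $L^{\pm}(z)$, so the computation applies verbatim with $5$ replaced by $n+1$ and $(3,5)$ by $(n-1,n+1)$.

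The step I expect to be the main obstacle is the Gaussian elimination itself: each restricted $RLL$-equation is a long sum over the $l^{\pm}_{ij}$, and one must choose precisely the right combination of such equations, weighted by monomials in $f^{\pm}_{ji}(w),f^{\pm}_{ji}(z)$ (respectively $e^{\pm}_{ij}$), so that all terms except those in $k^{\pm}_5$, $k^{\pm}_3$, $e^{\pm}_{35}$ (respectively $f^{\pm}_{53}$) cancel; keeping the spectral shifts $z_\pm,w_\mp$ of $(5.4)$ consistent with the definition $X^{\pm}_n(z)=e^{\pm}_{n-1,n+1}(z_\pm)-e^{\mp}_{n-1,n+1}(z_\mp)$ is the delicate bookkeeping point, and Lemma 6.5 provides exactly the vanishings that make these cancellations close.
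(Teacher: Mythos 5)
Your overall strategy for the four identities (6.62)--(6.65) --- restrict the block equations $M_{ij}=M'_{ij}$ coming from (5.2) and (5.4) to entries built on the indices $3,4,5$, invoke the vanishings $e^{\pm}_{45}=f^{\pm}_{54}=e^{\pm}_{36}=f^{\pm}_{63}=0$ of Lemma 6.5, and assemble $X^{+}_4(z)=e^{+}_{35}(z_+)-e^{-}_{35}(z_-)$ at the end --- is exactly the route the paper takes in Lemma 6.10 (and in Lemmas 6.7--6.9 for the companion identities). The comparison with the $\mathfrak{gl}$ case via $\hat R_1=\hat R_A$, and the observations that $(n+1)'=n$ and $a_{n+1,n+1}(z/w)=1$, are likewise consistent with how the paper organizes the argument.

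However, your first ``structural fact'' is false, and the substitution step built on it would fail. From the Gauss decomposition $L^{\pm}=F^{\pm}K^{\pm}E^{\pm}$ one has $l^{\pm}_{35}(z)=f^{\pm}_{31}(z)k^{\pm}_1(z)e^{\pm}_{15}(z)+f^{\pm}_{32}(z)k^{\pm}_2(z)e^{\pm}_{25}(z)+k^{\pm}_3(z)e^{\pm}_{35}(z)$ and $l^{\pm}_{55}(z)=k^{\pm}_5(z)+\sum_{m=1}^{4}f^{\pm}_{5m}(z)k^{\pm}_m(z)e^{\pm}_{m5}(z)$; Lemma 6.5 kills only the single term with $m=4$, so $l^{\pm}_{55}\neq k^{\pm}_5$ and $l^{\pm}_{35}\neq k^{\pm}_3e^{\pm}_{35}$. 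Consequently you cannot simply plug $l_{55}=k_5$ and $l_{35}=k_3e_{35}$ into the one scalar relation you display: the $m<3$ terms must first be removed by the precise $f$-weighted combinations of several entry-equations, which is exactly what the paper carries out in (6.19)--(6.30), (6.31)--(6.42) and (6.58)--(6.59). You do acknowledge this elimination at the very end as ``the main obstacle,'' but that contradicts your opening claim that the $L$-relations ``already collapse,'' and the elimination is precisely the part that cannot be skipped --- it is also where the claimed coefficient $\frac{rs(w-z_{\pm})}{sw-rz_{\pm}}$ would actually be verified, which your sketch leaves as an unevaluated $\varphi$. Finally, note that the proposal covers only the last four identities; the remainder of Theorem 6.1 (the contents of Lemmas 6.2--6.9, the commutator $[X^{+}_n(z),X^{-}_n(w)]$, and the $(r,s)$-Serre relations) is not addressed and does not reduce to the $k^{\pm}_5$ computations you describe.
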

\begin{proposition}
The following equations hold in $\mathcal{U}(\hat R)$
\begin{align}
&\Bigl\{X^{-}_{2}(z_1)X^{-}_{2}(z_2)X^{-}_{4}(w)-(r+s)X^{-}_{2}(z_1)X^{-}_{4}(w)X^{-}_{2}(z_2)\nonumber\\
&\quad+rsX^{-}_{4}(w)X^{-}_{2}(z_1)X^{-}_{2}(z_2)\Bigr\}+\Bigl\{z_1 \leftrightarrow z_2\Bigr\}=0,\\
&\Bigl\{X^{+}_{4}(z_1)X^{+}_{4}(z_2)X^{+}_{2}(w)-(r+s)X^{+}_{4}(z_1)X^{+}_{2}(w)X^{+}_{4}(z_2)\nonumber\\
&\quad+rsX^{+}_{2}(w)X^{+}_{4}(z_1)X^{+}_{4}(z_2)\Bigr\}+\Bigl\{z_1 \leftrightarrow z_2\Bigr\}=0,\\
&\Bigl\{rsX^{+}_{2}(z_1)X^{+}_{2}(z_2)X^{+}_{4}(w)-(r+s)X^{+}_{2}(z_1)X^{+}_{4}(w)X^{+}_{2}(z_2)\nonumber\\
&\quad+X^{+}_{4}(w)X^{+}_{2}(z_1)X^{+}_{2}(z_2)\Bigr\}+\Bigl\{z_1 \leftrightarrow z_2\Bigr\}=0,\\
&\Bigl\{rsX^{-}_{4}(z_1)X^{-}_{4}(z_2)X^{-}_{2}(w)-(r+s)X^{-}_{4}(z_1)X^{-}_{2}(w)X^{-}_{4}(z_2)\nonumber\\
&\quad+X^{-}_{2}(w)X^{-}_{4}(z_1)X^{-}_{4}(z_2)\Bigr\}+\Bigl\{z_1 \leftrightarrow z_2\Bigr\}=0.
\end{align}
\end{proposition}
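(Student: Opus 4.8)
The plan is to obtain the four $(r,s)$-Serre relations of the Proposition --- which form the $n=4$ base case of the Serre relations of Theorem~6.1 --- as a purely formal consequence of the quadratic current relations already available, with no further recourse to the $RLL$ relations beyond what enters Lemmas~6.2--6.10. For $n=4$ the inputs are: the self-relations $X^{\pm}_2(z)X^{\pm}_2(w)=\tfrac{zr-ws}{zs-rw}X^{\pm}_2(w)X^{\pm}_2(z)$ (inherited from the $U_{r,s}(\widehat{\mathfrak{gl}_4})$-relations among the $k^{\pm}_i,X^{\pm}_j$ of Theorem~6.1, cf.~\cite{JL}) and $X^{\pm}_4(z)X^{\pm}_4(w)=\tfrac{zr-ws}{zs-rw}X^{\pm}_4(w)X^{\pm}_4(z)$ of Theorem~6.1, together with the cross-relations $(z-w)X^{+}_4(w)X^{+}_2(z)=X^{+}_2(z)X^{+}_4(w)(rz-sw)$ and $(rz-sw)X^{-}_4(w)X^{-}_2(z)=X^{-}_2(z)X^{-}_4(w)(z-w)$ of Lemma~6.4; the mixed relations $X^{\pm}_4(w)X^{\mp}_2(z)=X^{\mp}_2(z)X^{\pm}_4(w)$ are irrelevant, since each Serre relation involves a single sign.

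Consider the relation with $X^{+}_2$ doubled (coefficients $rs,\,-(r+s),\,1$). Put $a_i=X^{+}_2(z_i)$, $b=X^{+}_4(w)$, $P(z)=\tfrac{z-w}{rz-sw}$ and $Q=\tfrac{sz_1-rz_2}{rz_1-sz_2}$. The cross-relation gives $a_ib=P(z_i)\,b\,a_i$, whence, moving $b$ to the far left in each monomial, $a_1a_2b=P(z_1)P(z_2)\,b\,a_1a_2$ and $a_1ba_2=P(z_1)\,b\,a_1a_2$; hence $rs\,a_1a_2b-(r+s)a_1ba_2+ba_1a_2=\Phi(z_1,z_2)\,b\,a_1a_2$ with $\Phi(z_1,z_2)=rs\,P(z_1)P(z_2)-(r+s)P(z_1)+1$. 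Clearing the denominator $(rz_1-sw)(rz_2-sw)$, the numerator of $\Phi(z_1,z_2)$ is found to be $(s-r)w(sz_1-rz_2)$. The self-relation of $X^{+}_2$ gives $b\,a_2a_1=Q\,b\,a_1a_2$, so the symmetrized expression equals $\bigl(\Phi(z_1,z_2)+Q\,\Phi(z_2,z_1)\bigr)b\,a_1a_2$, and its coefficient is $\tfrac{(s-r)w}{(rz_1-sw)(rz_2-sw)}\bigl((sz_1-rz_2)+Q(sz_2-rz_1)\bigr)$, which vanishes because $Q(sz_2-rz_1)=\tfrac{sz_1-rz_2}{rz_1-sz_2}\bigl(-(rz_1-sz_2)\bigr)=-(sz_1-rz_2)$. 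This is the asserted relation. The relation with $X^{+}_4$ doubled (coefficients $1,\,-(r+s),\,rs$) is obtained by the same steps, this time moving $X^{+}_2(w)$ across the $X^{+}_4(z_i)$ via $X^{+}_4(z_i)X^{+}_2(w)=\tfrac{rw-sz_i}{w-z_i}X^{+}_2(w)X^{+}_4(z_i)$ and using the self-relation of $X^{+}_4$: the same cancellation occurs. The two ``$-$'' relations follow by the identical manipulation applied to the $X^{-}$-currents with the mirror quadratic relations; there one uses $X^{-}_4(w)X^{-}_2(z)=\tfrac{z-w}{rz-sw}X^{-}_2(z)X^{-}_4(w)$ and $X^{-}_2(z_2)X^{-}_2(z_1)=\tfrac{sz_2-rz_1}{rz_2-sz_1}X^{-}_2(z_1)X^{-}_2(z_2)$, and the symmetrized coefficient becomes proportional to $(rz_1-sz_2)+(sz_2-rz_1)=0$.

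I do not expect a genuine obstacle here: once the quadratic relations above are granted, the Proposition is a closed-form rational identity, and the only points needing attention are quoting the input relations with the correct placement of $r$ and $s$ and carrying out the reorderings inside the completion in which the series $L^{\pm}(z)$ live (where the denominators $z_i-w$, $rz_i-sw$ and $rz_1-sz_2$ are invertible, in accordance with the expansion directions fixed in the definition of $\mathcal{U}(\hat R)$). The real content of the Proposition therefore lies in Lemmas~6.2--6.10, which establish those quadratic relations via the $RLL$ block-entry reductions; combined with the computation sketched above, this completes the base case $n=4$ of the Serre relations of Theorem~6.1, and the general $n$ follows by the induction announced in its proof.
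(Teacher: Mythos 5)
Your derivation is correct and is essentially the route the paper intends: the paper's own proof of this Proposition is a one-line deferral to the analogous computation in \cite{JL}, which is exactly this reduction of the symmetrized cubic to a vanishing rational coefficient using the quadratic exchange relations of Theorem 6.1 and Lemma 6.4 (I checked the key steps — the numerator of $rsP(z_1)P(z_2)-(r+s)P(z_1)+1$ over $(rz_1-sw)(rz_2-sw)$ does equal $(s-r)w(sz_1-rz_2)$, and the $z_1\leftrightarrow z_2$ term cancels it via the self-exchange factor $Q$, with the analogous identities for the other three relations). The only caveat is the one you already note: the cross-relations of Lemma 6.4 are stated in cleared-denominator form, so the divisions producing $P(z_i)$ must be read in the expansion directions fixed in Definition 5.1.
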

\begin{proof}
The equations can be proved similarly as in \cite{JL}.
\end{proof}
Now we proceed to the case of general $n$. We first restrict the relation to $E_{ij}\otimes E_{kl}$, $2\le i,j,k,l\le 2n-1$. By induction, we get all the commutation relations we need except those between $X^{\pm}_{1}(z)$, $k^{\pm}_1(z)$, and $X^{\pm}_{n}(z)$, $k^{\pm}_{n+1}(z)$.
\begin{lemma}
The following equations hold in $\mathcal{U}(\hat R)$:
\begin{align}
k^{\pm}_1(z)X^{+}_{n}(w)&=rsX^{+}_{n}(w)k^{\pm}_1(z),\\
rsk^{\pm}_1(z)X^{-}_{n}(w)&=X^{-}_{n}(w)k^{\pm}_1(z),\\
X^{\pm}_n(w)X^{\pm}_1(z)&=X^{\pm}_1(z)X^{\pm}_n(w),\\
X^{\pm}_n(w)X^{\mp}_1(z)&=X^{\mp}_1(z)X^{\pm}_n(w),\\
k^{\pm}_1(z)k^{\pm}_{n+1}(w)&=k^{\pm}_{n+1}(w)k^{\pm}_1(z),
\\
k^{\pm}_{n+1}(z)k^{\pm}_{n+1}(w)&=k^{\pm}_{n+1}(w)k^{\pm}_{n+1}(z),\\
k^{\pm}_{n+1}(z)k^{\mp}_{n+1}(w)&=k^{\mp}_{n+1}(w)k^{\pm}_{n+1}(z),\\
k^{\pm}_{n+1}(z)X^{+}_n(w)&=\frac{rs(w-z_{\pm})}{sw-rz_{\pm}}X^{+}_n(w)k^{\pm}_{n+1}(z),\\
X^{-}_n(w)k^{\pm}_{n+1}(z)&=\frac{rs(w-z_{\mp})}{sw-rz_{\mp}}k^{\pm}_{n+1}(z)X^{-}_n(w),\\
\frac{w_{\pm}-z_{\mp}}{w_{\pm}r-z_{\mp}s}k^{\mp}_{n+1}(w)k^{\pm}_1(z)&=\frac{w_{\mp}-z_{\pm}}{w_{\mp}r-z_{\pm}s}k^{\pm}_1(z)k^{\mp}_{n+1}(w),\\
[X^{+}_n(z),X^{-}_n(w)]&=(s^{-1}-r^{-1})\Bigl\{\delta\Bigl(\frac{z_-}{w_+}\Bigr)k^-_{n+1}(w_+)k^-_{n-1}(w_+)^{-1}\nonumber\\
&\qquad-\delta\Bigl(\frac{z_+}{w_-}\Bigr)k^+_{n+1}(z_+)k^+_{n-1}(z_+)^{-1}\Bigr\}.
\end{align}
\end{lemma}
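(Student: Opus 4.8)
The plan is to reproduce, for general $n$, the chain of computations already carried out in the case $n=4$ in the proof of Theorem 6.1, under the index dictionary $4\mapsto n$, $5\mapsto n+1$, $3\mapsto n-1$, $6\mapsto n+2=(n-1)'$, combined with the induction just invoked: restricting the matrix relations (5.9)--(5.12) to the block $E_{ij}\otimes E_{kl}$ with $2\le i,j,k,l\le 2n-1$ reproduces the $RLL$-relations of $U_{r,s}(\widehat{\mathfrak{so}_{2n-2}})$ on rows and columns $2,\dots,2n-1$, so the inductive hypothesis already supplies every relation among the Gaussian generators that involves neither the first row and column --- i.e.\ neither $k^{\pm}_1=l^{\pm}_{11}$ nor $X^{\pm}_1$ --- nor the spin quasi-determinant $k^{\pm}_{n+1}(z)$, which is the $(n{+}1,n{+}1)$-quasi-determinant of the leading $(n{+}1)\times(n{+}1)$ block of $L^{\pm}(z)$ and hence genuinely involves the index $1$. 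What is left is precisely the list in the statement.

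The first step is to establish the vanishing $e^{\pm}_{n-1,n+2}(z)=f^{\pm}_{n+2,n-1}(z)=0$ and $e^{\pm}_{n,n+1}(z)=f^{\pm}_{n+1,n}(z)=0$, by repeating the argument of the corresponding $n=4$ vanishing lemma with ``column $6$'' replaced by ``column $n+2=(n-1)'$'': restrict (5.9)/(5.11) to the blocks $M_{ij}=M'_{ij}$ with $i,j\le n-1$, combine those relations using the lower Gauss entries $f^{\pm}_{n-1,k},\dots$ until every $l^{\pm}$ below the diagonal is absorbed, and then set $z=w$. These vanishings, together with the type-$A$ identities of \cite{JL}, are what makes the subsequent Gauss ``peeling'' terminate. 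Next, for the relations involving $k^{\pm}_1$ and $X^{\pm}_1$, I would follow the $n=4$ derivation of $k^{\pm}_1X^{+}_4=rsX^{+}_4k^{\pm}_1$ and $X^{\pm}_4X^{\pm}_1=X^{\pm}_1X^{\pm}_4$: restrict (5.9)/(5.11) to the blocks $M_{11},M_{12},M_{21},M_{22}$ and their transposes, track the entries carrying $e^{\pm}_{n-1,n+1}$ and $f^{\pm}_{n+1,n-1}$ (hence $X^{\pm}_n$), cancel the triangular factors $k^{\pm}_ie^{\pm}_{ij}$ and $f^{\pm}_{ji}k^{\pm}_i$ by the relations already in hand, and read off $k^{\pm}_1X^{+}_n=rsX^{+}_nk^{\pm}_1$, $rsk^{\pm}_1X^{-}_n=X^{-}_nk^{\pm}_1$, $X^{\pm}_nX^{\pm}_1=X^{\pm}_1X^{\pm}_n$ and $X^{\pm}_nX^{\mp}_1=X^{\mp}_1X^{\pm}_n$; the last two use that nodes $1$ and $n$ are non-adjacent, so the relevant coefficients $a_{ij},b_{ij}$ of $\hat R(z)$ are equal to $1$.

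For the relations involving $k^{\pm}_{n+1}(z)$ I would again imitate the $n=4$ lemmas, except that the short strings $f^{\pm}_{32}f^{\pm}_{21}-f^{\pm}_{31}$, etc., used there are now replaced by the full length-$n$ strings of Gauss factors $f^{\pm}_{n+1,k}$ and $e^{\pm}_{k,n+1}$ ($1\le k\le n$) that expand the quasi-determinant: restrict (5.9)/(5.11) and (5.10)/(5.12) to the blocks $M_{n+1,j}$ and $M_{i,n+1}$, apply these strings to absorb the off-diagonal $l^{\pm}$, and after the cancellations --- which use the first step and the type-$A$ relations --- read off $k^{\pm}_1k^{\pm}_{n+1}=k^{\pm}_{n+1}k^{\pm}_1$, $k^{\pm}_{n+1}k^{\pm}_{n+1}=k^{\pm}_{n+1}k^{\pm}_{n+1}$, $k^{\pm}_{n+1}(z)k^{\mp}_{n+1}(w)=k^{\mp}_{n+1}(w)k^{\pm}_{n+1}(z)$, the relations between $k^{\pm}_{n+1}$ and $X^{\pm}_n$ with coefficient $\tfrac{rs(w-z_{\pm})}{sw-rz_{\pm}}$, and the cross-relation between $k^{\mp}_{n+1}$ and $k^{\pm}_1$ with coefficient $\tfrac{w_{\pm}-z_{\mp}}{w_{\pm}r-z_{\mp}s}$; the rational factors are inherited from the entries $a_{n,n+1}$, $b_{n,n+1}$ and $c_{ij}$ of $\hat R(z)$. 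Finally, for $[X^{+}_n(z),X^{-}_n(w)]$ I would mimic the $n=4$ spin-node computation: restrict the mixed relation (5.10)/(5.12) to $M_{n-1,n+1}=M'_{n-1,n+1}$ together with the neighbouring blocks needed to strip the Gauss factors, expand the rational coefficients of $\hat R(z_\pm/w_\mp)$ and of $\hat R(z_\mp/w_\pm)$ in the two opposite directions prescribed by (5.3) and unitarity (4.13) so that their difference collapses to $\delta(z_-/w_+)$ and $\delta(z_+/w_-)$, and identify the surviving diagonal contributions as $k^{\mp}_{n+1}(k^{\mp}_{n-1})^{-1}$ at the shifted arguments, with prefactor $(s^{-1}-r^{-1})$.

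The main obstacle is the block of relations involving $k^{\pm}_{n+1}(z)$. Unlike the other Gaussian generators that survive the induction, $k^{\pm}_{n+1}$ is a genuine quasi-determinant of a block whose size grows with $n$, and it cannot be reached inductively because deleting row and column $1$ changes it; extracting its commutation relations therefore forces one to carry the entire length-$n$ product of Gauss factors through the $RLL$-relations and to control all the resulting cross terms --- a considerably longer bookkeeping than the three-term combinations of the $n=4$ case. The delta-function relation $[X^{+}_n,X^{-}_n]$ is delicate for a separate reason: producing two delta functions rather than a single rational function requires expanding $\hat R$ in the two opposite directions dictated by (5.3)--(5.4) and invoking unitarity; but it is otherwise structurally identical to the $n=4$ case.
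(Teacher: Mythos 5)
Your plan is exactly the route the paper takes: the paper's own proof of this lemma consists of the single sentence ``By straightforward calculations one checks that the preceding formulas are correct,'' relying on the induction set up in the preceding paragraph and on the $n=4$ computations of Lemmas 6.3--6.10, which is precisely the index-shifted imitation (with $4\mapsto n$, $5\mapsto n+1$, $3\mapsto n-1$, $6\mapsto n+2$) that you describe. Your write-up is in fact more detailed than the paper's; the only blemish is the parenthetical claim that the relevant coefficients $a_{ij}$, $b_{ij}$ of $\hat R(z)$ ``are equal to $1$'' for non-adjacent nodes, which is not literally true (they are rational functions whose ratios cancel, exactly as in the $n=4$ computation), but this does not affect the argument.
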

\begin{proof}
By straightforward calculations one checks that the preceding formulas are correct.
\end{proof}
Finally, we define the map $\tau: U_{r,s}\mathcal(\widehat {\mathfrak{so}}_{2n})\rightarrow \mathcal{U}(\hat R)$ as follows:
\begin{align*}
x^{\pm}_i(z)&\mapsto (r-s)^{-1}X^{\pm}_i(z(rs^{-1})^{\frac{i}{2}}),
\\
x^{\pm}_n(z)&\mapsto (r-s)^{-1}X^{\pm}_n(z(rs^{-1})^{\frac{n-1}{2}}),
\\
\varphi_i(z)&\mapsto k^{+}_{i+1}(z(rs^{-1})^{\frac{i}{2}})k^+_i(z(rs^{-1})^{\frac{i}{2}})^{-1},
\\
\psi_i(z)&\mapsto k^{-}_{i+1}(z(rs^{-1})^{\frac{i}{2}})k^-_i(z(rs^{-1})^{\frac{i}{2}})^{-1},\\
\varphi_n(z)&\mapsto k^{+}_{n+1}(z(rs^{-1})^{\frac{n-1}{2}})k^+_{n-1}(z(rs^{-1})^{\frac{n-1}{2}})^{-1},\\
\psi_n(z)&\mapsto k^{-}_{n+1}(z(rs^{-1})^{\frac{n-1}{2}})k^-_{n-1}(z(rs^{-1})^{\frac{n-1}{2}})^{-1},
\end{align*}
where $1\le i \le n-1$, and satisfy all the relations of Proposition 6.13.
\begin{proposition}
In $U_{r,s}\mathcal(\widehat {\mathfrak{so}_{2n}})$, the generating series $x^{\pm}_{i}(z), \varphi_i(z)$, $\psi_i(z)$, and $x^{\pm}_{n}(z), \varphi_n(z)$, $\psi_n(z)$, and the relations between $x^{\pm}_{i}(z), \varphi_i(z)$, and $\psi_i(z)$ are the same as in $U_{r,s}\mathcal(\widehat {\mathfrak{sl}_{n}})$, the other relations as follows.
\begin{align}
[\varphi_j(z),\varphi_n(w)]&=0,\quad [\psi_j(z),\psi_n(w)]=0,\\
\varphi_j(z)\psi_n(w)&=\frac{g_{jn}\Bigl(\frac{z_-}{w_+}\Bigr)}{g_{jn}\Bigl(\frac{z_+}{w_-}\Bigr)}\psi_n(w)\varphi_j(z),\quad 1\le j\le n,\\
\varphi_n(z)x^{\pm}_{n-2}(w)&=(rs)^{\pm1}g_{n,n-2}\Bigl(\frac{z}{w_{\pm}}\Bigr)^{\pm1}x^{\pm}_{n-2}(w)\varphi_n(z),\nonumber\\
\psi_n(z)x^{\pm}_{n-2}(w)&=(rs)^{\mp1}g_{n,n-2}\Bigl(\frac{w_{\mp}}{z}\Bigr)^{\mp1}x^{\pm}_{n-2}(w)\varphi_n(z),\\
\varphi_n(z)x^{\pm}_{n}(w)&=g_{nn}\Bigl(\frac{z}{w_{\pm}}\Bigr)^{\pm1}x^{\pm}_{n}(w)\varphi_n(z),\nonumber\\
\psi_n(z)x^{\pm}_{n}(w)&=g_{nn}\Bigl(\frac{w_{\mp}}{z}\Bigr)^{\mp1}x^{\pm}_{n}(w)\varphi_n(z),\\
\varphi_n(z)x^{\pm}_{n-1}(w)&=(rs)^{\pm1}x^{\pm}_{n-1}(w)\varphi_n(z),\nonumber\\
\psi_n(z)x^{\pm}_{n-1}(w)&=(rs)^{\mp1}x^{\pm}_{n-1}(w)\psi_n(z),\\
\varphi_n(z)x^{\pm}_{l}(w)&=x^{\pm}_{l}(w)\varphi_n(z),\nonumber
\\
\psi_n(z)x^{\pm}_{l}(w)&=x^{\pm}_{l}(w)\psi_n(z),\quad 1\le l\le n-3,\\
\varphi_t(z)x^{\pm}_{n}(w)&=x^{\pm}_{n}(w)\varphi_t(z),\nonumber\\
\psi_t(z)x^{\pm}_{n}(w)&=x^{\pm}_{n}(w)\varphi_t(z),\quad 1\le t\le n-3,\nonumber\\
\varphi_{n-2}(z)x^{\pm}_{n}(w)&=g_{n,n-2}\Bigl(\frac{z}{w_{\pm}}\Bigr)^{\pm1}x^{\pm}_{n}(w)\varphi_{n-2}(z),\nonumber\\
\psi_{n-2}(z)x^{\pm}_{n}(w)&=g_{n,n-2}\Bigl(\frac{w_{\mp}}{z}\Bigr)^{\mp1}x^{\pm}_{n}(w)\varphi_{n-2}(z),\\
x^{\pm}_{n-2}(z)x^{\pm}_{n}(w)&=g_{n,n-2}\Bigl(\frac{z}{w}\Bigr)^{\pm1}x^{\pm}_{n}(w)x^{\pm}_{n-2}(z),\nonumber\\
x^{\pm}_{i}(z)x^{\pm}_{n}(w)&=\langle w^{\prime}_n,w_i\rangle^{\pm1}x^{\pm}_{n}(w)x^{\pm}_{i}(z),\quad a_{in}=0
\\
[x^{+}_{n}(z),x^{-}_{j}(w)]&=(s^{-1}{-}r^{-1})\delta_{jn}\Bigl\{\delta\Bigl(\frac{z_{-}}{w_+}\Bigr)\psi_n(w_+){-} \delta\Bigl(\frac{z_{+}}{w_-}\Bigr)\varphi_n(z_+)\Bigr\},\ j\le n,
\\
\Bigl\{x^{\pm}_{n-2}(z_1)x^{\pm}_{n-2}&(z_2)x^{\pm}_{n}(w)-(r^{\pm1}+s^{\pm1})x^{\pm}_{n-2}(z_1)x^{\pm}_{n}(w)x^{\pm}_{n-2}(z_2)\nonumber\\
&\qquad\quad+(rs)^{\pm1}x^{\pm}_{n}(w)x^{\pm}_{n-2}(z_1)x^{\pm}_{n-2}(z_2)\Bigr\}+\Bigl\{z_1 \leftrightarrow z_2\Bigr\}=0,
\end{align}
\begin{align}
\Bigl\{x^{\pm}_{n}(z_1)x^{\pm}_{n}&(z_2)x^{\pm}_{n-2}(w)-(r^{\mp1}+s^{\mp1})x^{\pm}_{n}(z_1)x^{\pm}_{n-2}(w)x^{\pm}_{n}(z_2)\nonumber\\
&\qquad\quad+(rs)^{\mp1}x^{\pm}_{n-2}(w)x^{\pm}_{n}(z_1)x^{\pm}_{n}(z_2)\Bigr\}+\Bigl\{z_1 \leftrightarrow z_2\Bigr\}=0,
\end{align}
where $z_+=zr^{\frac{c}{2}}$ and $z_-=zs^{\frac{c}{2}}$, we set $g^{\pm}_{ij}(z)=\sum\limits_{n\in\mathbb{Z}_{+}}c^{\pm}_{ijn}z^{n}$,
a formal power series in $z$, the expression is as follows:
$$g^{\pm}_{ij}(z)=\frac{\langle w^{\prime}_j,w_i\rangle^{\pm1}z-(\langle w^{\prime}_j,w_i\rangle \langle w^{\prime}_i,w_j\rangle^{-1})^{\pm\frac{1}{2}}}{z-(\langle w^{\prime}_i,w_j\rangle \langle w^{\prime}_j,w_i\rangle)^{\pm\frac{1}{2}}}.$$
\end{proposition}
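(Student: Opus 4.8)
The plan is to read off the statement from the relations already verified in $\mathcal{U}(\hat R)$. By construction $\tau$ is the composite of the Gauss‑decomposition identification of Section~5 with the diagonal rescaling $z\mapsto z(rs^{-1})^{i/2}$ (resp. $z\mapsto z(rs^{-1})^{(n-1)/2}$ at the spin node), and every commutation relation among the Gaussian generators $k^{\pm}_i(z)$, $X^{\pm}_j(z)$, $k^{\pm}_{n+1}(z)$, $X^{\pm}_n(z)$ has already been proved in Theorem~6.1, in the Lemmas occurring in its proof, and in the cubic (Serre) Proposition and the final Lemma of this section. So the proposition amounts to transporting each of those relations through $\tau$ and recognizing the result as the Drinfeld presentation of $U_{r,s}(\widehat{\mathfrak{so}_{2n}})$ (the ``modified version of \cite{HZ}''). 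I would organize the verification into four blocks, in the order below.

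First, for the subfamily $\{x^{\pm}_i(z),\varphi_i(z),\psi_i(z)\mid 1\le i\le n-1\}$ there is nothing new: inside the proof of Theorem~6.1 one has the identification $\hat R_1(z/w)=\hat R_A(z/w)$ with the type‑$A$ $R$‑matrix, so the relevant $\mathcal{U}(\hat R)$‑relations are literally the $U_{r,s}(\widehat{\mathfrak{gl}_n})$‑relations restricted to $E_{ij}\otimes E_{kl}$ with $1\le i,j,k,l\le n$, and $\tau$ on this block is exactly Jing--Liu's isomorphism \cite{JL} composed with the diagonal rescaling; this gives the clause ``the same as in $U_{r,s}(\widehat{\mathfrak{sl}_n})$''. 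Next, for the Cartan relations involving $k^{\pm}_{n+1}$ I would substitute the rescalings into the three displays $k^{\pm}_i(z)k^{\pm}_{n+1}(w)=k^{\pm}_{n+1}(w)k^{\pm}_i(z)$, the self‑commutation of $k^{\pm}_{n+1}$, and the $(\mp,\pm)$ relation with coefficient $\tfrac{z_{\mp}-w_{\pm}}{rz_{\mp}-sw_{\pm}}$, using $\varphi_n(z)=k^{+}_{n+1}(z')k^{+}_{n-1}(z')^{-1}$ and $\psi_n(z)=k^{-}_{n+1}(z')k^{-}_{n-1}(z')^{-1}$ together with the already known $k^{\pm}_{n\pm1}$--$k^{\pm}_j$ relations; multiplying out the rational prefactors and using $\langle w'_n,w_j\rangle=1$ for $j\le n-2$ yields exactly $[\varphi_j,\varphi_n]=0$, $[\psi_j,\psi_n]=0$ and $\varphi_j(z)\psi_n(w)=\tfrac{g_{jn}(z_-/w_+)}{g_{jn}(z_+/w_-)}\psi_n(w)\varphi_j(z)$.

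Third, I would process each relation of Theorem~6.1 and of the Lemmas involving $X^{\pm}_n(z)$ against $k^{\pm}_t(w)$, and against $X^{\pm}_t(w)$, by applying $\tau$. For instance the pair $k^{\pm}_{n-1}(w)X^{+}_n(z)=\tfrac{rs(w-z_{\pm})}{rw-sz_{\pm}}X^{+}_n(z)k^{\pm}_{n-1}(w)$ and $k^{\pm}_{n+1}(w)X^{+}_n(z)=\tfrac{rs(w-z_{\pm})}{sw-rz_{\pm}}X^{+}_n(z)k^{\pm}_{n+1}(w)$ combine, after the shift, into $\varphi_n(z)x^{+}_n(w)=\tfrac{rz'-sw'_{\pm}}{sz'-rw'_{\pm}}\,x^{+}_n(w)\varphi_n(z)$, and since $\langle w'_n,w_n\rangle=rs^{-1}$ one checks $\tfrac{rz'-sw'_{\pm}}{sz'-rw'_{\pm}}=g_{nn}\!\big(\tfrac{z}{w_{\pm}}\big)$; similarly the $k^{\pm}_{n-1}$--$X^{\pm}_n$ relation feeds the $\varphi_{n-2}$–$x^{\pm}_n$ and $\varphi_n$–$x^{\pm}_{n-2}$ relations with the function $g_{n,n-2}$, using that $g_{n,n-2}(z)$ and $g_{n-2,n}(z)$ are tied by the value of $\langle w'_n,w_{n-2}\rangle\langle w'_{n-2},w_n\rangle$. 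The relations $X^{\pm}_n(w)X^{\mp}_l(z)=X^{\mp}_l(z)X^{\pm}_n(w)$, $X^{\pm}_n(w)X^{\pm}_{n-1}(z)=(rs)^{\pm1}X^{\pm}_{n-1}(z)X^{\pm}_n(w)$, and the quadratic relations among $X^{\pm}_n(z)$ translate directly, while the mixed bracket $[X^{+}_n(z),X^{-}_t(w)]=(s^{-1}-r^{-1})\delta_{nt}\{\cdots\}$ becomes $[x^{+}_n(z),x^{-}_j(w)]=(s^{-1}-r^{-1})\delta_{jn}\{\delta(z_-/w_+)\psi_n(w_+)-\delta(z_+/w_-)\varphi_n(z_+)\}$ once $k^{\pm}_{n+1}(z_{\pm})k^{\pm}_{n-1}(z_{\pm})^{-1}$ is replaced by $\varphi_n,\psi_n$ and the delta arguments are matched.

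Finally, the four cubic $\{\,\cdot\,\}+\{z_1\leftrightarrow z_2\}=0$ identities established just above are rescaled under $\tau$; tracking the coefficients $r+s$ and $rs$ and the shifts produces the $(r,s)$‑Serre relations $(6.93)$–$(6.94)$ with the correct exponents $(r^{\pm1}+s^{\pm1})$ and $(rs)^{\pm1}$. Having checked that $\tau$ sends every defining relation of the Drinfeld presentation to a relation valid in $\mathcal{U}(\hat R)$, $\tau$ is a well‑defined algebra homomorphism; it is surjective since its image contains all Drinfeld generators, and injectivity is obtained exactly as in Theorem~4.4 (cf.\ \cite[\S8.5]{JLM1}) by constructing the inverse on generators and comparing graded dimensions. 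I expect the genuine difficulty to lie in the third block: one must keep the central shifts $z_+=zr^{c/2}$, $z_-=zs^{c/2}$ consistent across the two sides of the $RLL$‑relation — the $\hat R(z_+/w_-)$ versus $\hat R(z_-/w_+)$ asymmetry — so that the non‑symmetric series $g^{+}_{ij}$ and $g^{-}_{ij}$ emerge with exactly the right exponents; the Serre identities are combinatorially the heaviest, but they are a direct transcription of the argument of \cite{JL} adapted to the embedding of node $n-2,n$.
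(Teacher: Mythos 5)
Your proposal is correct and takes essentially the same route as the paper: the paper states this proposition without any written proof, immediately after defining $\tau$, precisely because it is the transport through $\tau$ of the relations already established in Theorem 6.1, its Lemmas, and the cubic Serre Proposition, and your four blocks (type-$A$ part via $\hat R_1=\hat R_A$ and \cite{JL}, Cartan part, mixed $X_n^{\pm}$ part with the $g_{ij}$ bookkeeping, Serre part) supply exactly the verification the paper leaves implicit. The only excess is your closing discussion of surjectivity and injectivity of $\tau$, which the proposition as stated does not assert.
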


\bigskip
\centerline{\bf ACKNOWLEDGMENTS}

\bigskip

We thank Professors Zhengwei Liu, Jinsong Wu, and Yilong Wang for inviting Hu to report the current results in the international workshop on Advances in Quantum Algebras during Jan. 23--27, 2024 in BIMSA (Beijing Institute of Mathematical Sciences and Applications). We also thank ECNU for partial support of Xu's master thesis where the basic $R$-matrices were announced in the final version on March 8, 2023.

\bibliography{mybibfile}

\begin{thebibliography}{99}

\bibitem[BW]{BW}
G. Benkart, S. Witherspoon, Two-parameter quantum groups and Drinfel'd doubles, Algebr. Represent. Theory 7 (2004), 261---286.

\bibitem[BGH]{BGH}
N. Bergeron, Y. Gao, N. Hu, Drinfel'd doubles and Lusztig's symmetries of two-parameter quantum groups, J. Algebra  301 (2006), 378---405.

\bibitem[BGH1]{BGH1}
---, Representations of two-parameter quantum orthogonal groups and symplectic groups, In: Proceedings of the International Conference on Complex Geometry and Related Fields, AMS/IP, Stud. Adv. Math., Vol. 39, pp. 1---21, Amer. Math. Soc., Providence, RI, 2007.


\bibitem[CHW]{Chen-Hu-Wang} X. Y. Chen, N. Hu, X. L. Wang, Convex PBW-type Lyndon bases and restricted two-parameter quantum group of type $F_4$, Acta Math. Sinica \textbf{39} (6), (2023), 1053---1084.

\bibitem[DF]{DF}
J. Ding, I.B. Frenkel, Isomorphism of two realizations of quantum affine algebra $U_q(\widehat{\mathfrak{gl}}_n)$, Comm. Math. Phys. 156 (1993), 277---300.

\bibitem[D1]{D1}
V. Drinfeld, Hopf algebras and the quantum Yang-Baxter equation, Sov. Math., Dokl. 32 (1985), 254---258.

\bibitem[D2]{D2}
---, Quantum group, in: Proc. ICM, Vol. 1, 2, Berkeley, Calif., 1986, Amer. Math. Soc. Providence, RI, 1987, pp. 798---820.

\bibitem[D3]{D3}
---, A new realization of Yangians and quantized affine algebras, Sov. Math., Dokl. 36 (1988), 212---216.

\bibitem[FRT90]{FRT90}
L. Faddeev, N. Reshetikhin, L.  Takhtajan, Quantization of Lie groups and Lie algebras, Leningrad Math. J. 1 (1990), 193---225.

\bibitem[FRT89]{FRT89}
---, Quantization of Lie groups and Lie algebras, in: Yang-Baxter Equations and Integrable Systems, in: Advanced Series in Mathematical Physics, vol. 10, World Scientific, Singapore, 1989, pp. 299---309.


\bibitem[Ga]{Ga}
H. Garland, The arithmetic theory of loop groups, Publ. Math. IHES 52 (1980), 5---136.

\bibitem[HP]{HP}
N. Hu, Y. Pei, Notes on two-parameter quantum groups, (II), Comm. Algebra 40 (9) (2012), 3202---3220.

\bibitem[HRZ]{HRZ}
N. Hu, M. Rosso, H.L. Zhang, Two-parameter quantum affine algebra $U_{r,s}(\widehat{\mathfrak{sl}}_n)$, Drinfeld realization and quantum affine Lyndon basis, Comm. Math. Phys. 278 (2008), 453---486.

\bibitem[HXZ]{HXZ} N. Hu, X. Xu, R.S. Zhuang, $RLL$-realization of two-parameter quantum affine algebra in type $B^{(1)}_n$, arXiv:2405.06587, 29 pages.

\bibitem[XZ]{HZ}
N. Hu, H.L. Zhang, Generating functions with $\tau$-invariance and vertex representations of quantum affine algebras $U_{r,s}(\hat{\mathfrak{g}})$ (I): simply-laced cases, 	arXiv: 1401.4925.




\bibitem[J]{J}
M. Jimbo, A $q$-difference analogue of $U(\mathfrak{g})$ and the Yang-Baxter equation, Lett. Math. Phys. 10 (1985), 63---69.

\bibitem[JL]{JL}
N. Jing, M. Liu, $R$-matrix realization of two-parameter quantum affine algebra $U_{r,s}(\widehat{\mathfrak{gl}}_n)$, J. Algebra 488 (2017), 1---28.

\bibitem[JLM]{JLM}
N. Jing, M. Liu, A. Molev, Isomorphism between the $R$-matrix and Drinfeld presentations of quantum affine algebra: types $B$ and $D$, SIGMA Symmetry Integrability Geom. Methods Appl.16 (2020), Paper No. 043, 49 pp.

\bibitem[JLM1]{JLM1}
---, Isomorphism between the $R$-matrix and Drinfeld presentations of quantum affine algebra: type $C$, J. Math. Phys. 61 (3) (2020),  031701, 41 pp.

\bibitem[KS]{KS}
A. Klimyk, K. Schm\"udgen, Quantum groups and their representations, Texts and Monographs in Physics, Springer-Verlag, Berlin, 1997, xx+552 pp.

\bibitem[RS]{RS}
N.Yu. Reshetikhin, M.A. Semenov-Tian-Shanski, Central extensions of quantum current groups, Lett. Math. Phys. 19 (1990), 133---142.

\bibitem[T]{T}
M. Takeuchi, A two-parameter quantization of $GL(n)$, Proc. Japan Acad. Ser. A 66 (1990), 112---114.

\bibitem[ZhHJ]{ZhHJ} X. Zhong, N. Hu, N. Jing, $RLL$-realization of two-parameter quantum affine algebra in type $C^{(1)}_n$, arXiv:2405.06597, 25 pages.


\end{thebibliography}

\end{document}